\newtheorem{theorem}{Theorem}[section]
\newtheorem{lemma}[theorem]{Lemma}
\newtheorem{proposition}[theorem]{Proposition}
\newtheorem{corollary}[theorem]{Corollary}
\theoremstyle{definition}
\newtheorem{definition}[theorem]{Definition}
\newtheorem{example}[theorem]{Example}
\newtheorem{remark}[theorem]{Remark}
\definecolor{colorX}{RGB}{240,0,0}
\definecolor{colorY}{RGB}{0,0,160}
\definecolor{colorZ}{RGB}{0,160,0}
\newcommand{\N}{\mathbb{N}}
\newcommand{\Z}{\mathbb{Z}}
\newcommand{\R}{\mathbb{R}}
\newcommand{\C}{\mathbb{C}}
\renewcommand{\P}{\mathbb{P}}
\newcommand{\CI}{\mathcal{I}}
\newcommand{\CF}{\mathcal{F}}
\newcommand{\CL}{\mathcal{L}}
\newcommand{\CE}{\mathcal{E}}
\newcommand{\CH}{\mathcal{H}}
\newcommand{\CO}{\mathcal{O}}
\newcommand{\CX}{\mathcal{X}}
\newcommand{\CY}{\mathcal{Y}}
\newcommand{\CZ}{\mathcal{Z}}
\newcommand{\Hom}{\mathop{\rm Hom}\nolimits}
\newcommand{\Ext}{\mathop{\rm Ext}\nolimits}
\newcommand{\Id}{\mathop{\rm Id}\nolimits}
\newcommand{\Pic}{\operatorname{Pic}}
\newcommand{\gH}{\operatorname{H}}
\renewcommand{\ker}{\mathop{\rm ker}\nolimits}
\newcommand{\rk}{\mathop{\rm rk}\nolimits}
\newcommand{\Proj}{\mr{Proj}\,}
\newcommand{\Sym}{\operatorname{Sym}}
\newcommand{\mr}{\mathrm}
\newcommand{\mb}{\mathbf}
\newcommand{\mc}{\mathcal}
\newcommand{\bp}{\begin{para}}
\newcommand{\ep}{\end{para}}
\newcommand{\bps}{\begin{paras}}
\newcommand{\eps}{\end{paras}}
\newcommand{\benum}{\begin{enumerate}[{\rm(i)}]}
\newcommand{\eenum}{\end{enumerate}}
\newcommand{\kst}{\,|\;}
\newcommand{\kss}{\scriptscriptstyle}
\newcommand{\kbb}{{\kss \bullet}}
\newcommand{\Eff}{\operatorname{Eff}}
\newcommand{\Nef}{\operatorname{Nef}}
\newcommand{\nImm}{\operatorname{\CI}}
\newcommand{\helixL}{\operatorname{h_L}}
\newcommand{\helixR}{\operatorname{h_R}}
\newcommand{\helex}{\hslash}
\newcommand{\DeltaUp}{\Delta^{\hspace{-0.1em}\mbox{\tiny up}}}
\newcommand{\mes}{{maximal exceptional sequence}}
\newcommand{\cl}{{s}}
\newcommand{\xa}{\alpha} 
\newcommand{\xb}{\beta}
\newcommand{\nAcyc}{{\operatorname{Acyc}}}
\newcommand{\CD}{{\mathcal D}}
\newcommand{\CT}{{\mathcal T}}
\newcommand{\cX}{{\CX}}
\newcommand{\cY}{{\CY}}
\newcommand{\cZ}{{\CZ}}
\newcommand{\cA}{{A}}
\newcommand{\cB}{{B}}
\newcommand{\lH}{{\ell}}
\newcommand{\lV}{{r}}
\newcommand{\tO}{{O}}
\newcommand{\pO}{{P}}
\newcommand{\free}{{\operatorname{K_{free}}}}
\newcommand{\Zbound}{{\CZ_{-\infty}}}
\newcommand{\Zfree}{{\CZ_{\operatorname{free}}}}
\newcommand{\layers}{{\operatorname{K_{layers}}}}
\newcommand{\pF}{F}
\newcommand{\drawExSec}[1]{
 \StrSubstitute{#1}{,}{/}[\replacedSlash]
 \StrSubstitute{\replacedSlash}{;}{,}[\xycoords]
 \foreach [count=\i] \x/\y in \xycoords
 {
  \node[shape=circle,draw,fill=white,inner sep=1pt] at (\x,\y) {$\scriptstyle \i$};
 }
}
\newcommand{\drawExSet}[1]{
 \StrSubstitute{#1}{,}{/}[\replacedSlash]
 \StrSubstitute{\replacedSlash}{;}{,}[\xycoords]
 \foreach [count=\i] \x/\y in \xycoords
 {
  \ifthenelse {1=\i}
   {\fill (\x,\y) circle (5pt); \draw (\x,\y) circle (7pt);}
   {\fill (\x,\y) circle (5pt);}
 }
}
\definecolor{oliwkowy}{HTML}{627037}
\definecolor{lightblue}{RGB}{135,206,250}
\definecolor{darkblue}{RGB}{0,0,160}
\definecolor{darkgreen}{RGB}{0,160,0}
\definecolor{veryPeri}{RGB}{102,103,171}
\definecolor{intOrange}{rgb}{1.0,.310,.0}
\definecolor{MidnightBlue}{RGB}{102,103,171}
\definecolor{cklaus}{rgb}{0.0,.288,.378}
\definecolor{candreas}{RGB}{135,206,250}
\newcounter{para}[section]
\newenvironment*{para}[1]{\refstepcounter{para}\noindent\ignorespaces{\bf\thepara.~#1.}}{\ignorespacesafterend\bigskip}
\newenvironment*{paras}[1]{{\bf #1.}}{\ignorespacesafterend\bigskip}
\numberwithin{para}{section}
\newcommand{\Imm}{{\operatorname{Imm}}}
\setlist[enumerate,1]{label = (\roman*),ref = \theenumii.\roman*}
\newcommand{\SymE}{\Sym^\bullet\CE^\vee}
\newcommand{\Tang}{\mc T}
\newcommand{\Gr}{\mathbb{G}r}
\newcommand{\Flag}{\mathbb{F}l}
\newcommand{\leftMut}{\mb L}
\newcommand{\rightMut}{\mb R}
\newcommand{\sod}[1]{\langle #1 \rangle}
\newcommand{\cone}[1]{\langle #1 \rangle}
\newcommand{\thmExSec}{A}
\newcommand{\thmStrongExSec}{B}
\newcommand{\thmFull}{C}
\newcommand{\RGamma}{{\R\Gamma}}
\newcommand{\RHom}{{\R\!\operatorname{Hom}}}
\newcommand{\kk}{{\operatorname{k}}}
\newcommand{\eslb}{{exceptional sequence of line bundles}}
\newcommand{\meset}{{maximal exceptional set}}
\newcommand{\eff}{\mathrm{eff}}
\newcommand{\nef}{\mathrm{nef}}
\newcommand{\effec}{{effective}}
\newcommand{\into}{\hookrightarrow}
\newcommand{\onto}{\twoheadrightarrow}
\newcommand{\arXiv}[1]{\href{http://arxiv.org/abs/#1}{\textup{\texttt{arXiv:#1}}}}
\newif\ifnotext
\begin{document}
\parindent0mm

\title[Exceptional sequences]
{Exceptional sequences of line bundles on projective bundles}

\author[K.~Altmann]{Klaus Altmann
}
\address{Institut f\"ur Mathematik,
FU Berlin,
K\"onigin-Luise-Str.~24-26,
D-14195 Berlin
}
\email{altmann@math.fu-berlin.de}
\author[A.~Hochenegger]{Andreas Hochenegger}
\address{
Dipartimento di Matematica ``Francesco Brioschi'',
Politecnico di Milano,
via Bonardi 9,
I-20133 Milano 
}
\email{andreas.hochenegger@polimi.it}
\author[F.~Witt]{Frederik Witt}
\address{
Fachbereich Mathematik,
U Stuttgart,
Pfaffenwaldring 57,
D-70569 Stuttgart
}
\email{witt@mathematik.uni-stuttgart.de}
\thanks{MSC 2020: 
05E14, 
06A11, 
14F08, 
14J60 
14M25, 
52C05; 
Key words: toric varieties, projectivised bundles, derived category, strongly exceptional sequences, Rouquier dimension}

\begin{abstract}
For a vector bundle $\CE\to\P^\lH$ we investigate exceptional sequences of line bundles on the total space of the projectivisation $X=\P(\CE)$. In particular, we consider the case of the cotangent bundle of $\P^\lH$. If $\lH=2$, we completely classify the (strong) exceptional sequences and show that any maximal exceptional sequence is full. For general $\lH$, we prove that the Rouquier dimension of $\CD(X)$ equals $\dim X$, thereby confirming a conjecture of Orlov.
\end{abstract}

\maketitle

\section{Introduction}
\label{sec:Intro}
Let $X$ be a smooth projective variety over an algebraically closed field $\kk$ of characteristic $0$ such that 
\[
\RGamma(X,\CO_X)=\kk\quad\mbox{and}\quad\rk K_0(X)<\infty;
\]
this will hold for all varieties considered in this paper. A sequence of line bundles\footnote{By abuse of language we always speak of line and vector bundles even if invertible or locally free sheaves would sometimes be more appropriate.} $\CL_1,\ldots,\CL_n$ is {\em exceptional} if there are no nontrivial backwards Ext-groups, i.e., 
\[
\Ext^k(\CL_j,\CL_i)=0\hspace{0.8em}\mbox{whenever }i<j.
\]
Since $\Ext^k(\CL_j,\CL_i)=H^k(X,\CL_j^{-1}\otimes\CL_i)$, this boils down to require that the line bundles $\CL_j^{-1}\otimes\CL_i$ belong to the {\em immaculate locus}
\[
\Imm(X):=\{\CL\in\Pic(X)\mid \RGamma(X,\CL)=0\} 
\]
of cohomologically trivial line bundles.\footnote{Here and in the sequel, we identify line bundles with their classes in the Picard group.} The length $n$ of an \eslb\ is bounded by the rank of $K_0(X)$. We say that the sequence is {\em maximal} if actually $n=\rk K_0(X)$. It is useful to think of an exceptional sequence in terms of a subset $\cl\subseteq\Pic(X)$ for which exists an (in general non-unique) {\em exceptional order} $<_\tO$, that is, a total order turning $\cl$ into an exceptional sequence. 

\medskip

In~\cite{fesfano}, the first and third author discuss a combinatorial approach to the structure of maximal exceptional sequences on smooth projective toric varieties of Picard rank two. By results of Kleinschmidt~\cite{kle88}, these varieties arise as the total space of the projectivised bundle $\P(\bigoplus_{j=0}^\lV\CL_i)\to\P^\lH$ given by a direct sum of line bundles. We subsequently refer to this as the {\em toric case}. 

\medskip

The goal of this sequel is threefold: First, to put the combinatorial arguments into a more geometric context. Second, to extend the methods past toric varieties. Thirdly, to discuss applications involving strongly exceptional sequences. For this we consider the family of projective bundles $X_\lH=\P(\Omega_{\P^\lH})\to\P^\lH$ given by the cotangent bundle of $\P^\lH$ (for technical reasons, we actually consider the twisted bundle $\Omega_{\P^\lH}(1)=\Omega_{\P^\lH}\otimes\CO_{\P^\lH}(1)$ which, however, induces the same projectivised bundle). In this context, we set out to prove the following model theorems.

\medskip

{\bf Theorem~\thmExSec.} 
{\em Classification of the \meset s of $X$.}

\smallskip

The statement is deliberately vague. In the best case, classification means enumeration of any \mes, which for $\lH=2$ is achieved in Corollary~\ref{coro:FullList}. Less ambitiously, we can settle for a classification of the underlying \meset s as in~\cite{fesfano}, which classifies \meset s for the toric case in this weaker sense. For applications such as Theorem~\thmFull\ below, a weak classification of \mes s up to {\em helixing} is already sufficient. 

\smallskip

In principle, our methods also extend to higher $\lH$ at least for a classification of the \meset s, but unlike the toric case we failed to find a general pattern which is why we restrict to $\lH=2$. 

\medskip

{\bf Theorem~\thmStrongExSec.} 
{\em Classification of strongly exceptional \meset s of $X$.}

\smallskip

Strongly exceptional sets enjoy the additional property of
\[
\Ext^k(\CL,\CL')=0\text{ for all $\CL$, $\CL'\in\cl$ and all $k>0$}.  
\]
Homologically, they are interesting for their connections with tilting bundles, see Section~\ref{sec:ExObMut}. Combinatorially, they are interesting because they are {\em \effec} which roughly speaking means that they are the most flexible sets for finding exceptional orders, cf.\  Subsection~\eqref{subsec:Div}. We prove Theorem~\thmStrongExSec\ for the toric case and $X_2=\P(\Omega_{\P^2})$. Here, we classify in the weak sense, that is, we explicitly determine the sets of Theorem~\thmExSec\ underlying a strongly exceptional set.

\medskip

{\bf Theorem~\thmFull.}
{\em Every \meset\ of line bundles is full.}

\smallskip

A maximal exceptional sequence is {\em full} if as a set it generates $\CD(X)$. In general, this is false due to the existence of phantom categories, see~\cite{Phantom}. On the other hand, Kuznetsov conjectured in his ICM '14 talk that if there exists at least one full maximal exceptional sequence (not necessarily of line bundles), then any other maximal sequence is also full~\cite{KuznetsovICM}. 

\smallskip

Though Krah exhibits a counterexample in~\cite{Krah}, this property holds for exceptional sequences of line bundles on $X_2$, cf.\ Theorem~\ref{thm:CforX2} (for the toric case, see~\cite{fesfano}). The idea consists in exhibiting a suitable system of representatives of \meset s up to helixing and to reduce these to sequences of {\em Orlov-type} which we discuss in Section~\ref{sec:ExObMut}. We note in passing that \cite[Thm. 4.14]{borisov21} shows that every strongly exceptional \meset\ of maximal length is already full on a Fano toric Deligne-Mumford stack with Picard rank at most two. 

\medskip

As an aside on the theme of strongly exceptional sequences and tilting bundles we shall also prove that the Rouquier dimension of $\CD(X_\lH)$ equals $\dim X_\lH$ for $\lH\geq2$, thereby lending further support for a conjecture by Orlov~\cite{Orlov-generators}. The same method also applies to the toric case provided the anti-canonical line bundle $-K_X$ is nef. In fact, Orlov's conjecture holds for any toric variety, see~\cite{RouquDim}.

\subsubsection*{Plan of the paper}
Section~\ref{sec:ExObMut} reviews some background on exceptional sequences. Section~\ref{sec:ExPosets} investigates the intersection of all possible exceptional orders on a given exceptional set, thereby deriving a criterion for strong exceptionality. Section~\ref{sec:ProjBund} discusses the general setup of projective bundles $\P(\CE)\to\P^\lH$, while Sections~\ref{sec:toric} and~\ref{sec:CoTang} investigate the toric case and $X_\lH$, respectively.
Finally, we gather some results on the Rouquier dimension in Section~\ref{sec:Rouquier}.

\subsubsection*{Acknowledgements}
We thank the anonymous referee for several very valuable remarks.

\section{Preliminaries}
\label{sec:ExObMut}
We recall some background on exceptional objects and related notions. A good reference is~\cite{Rudakov}. 

\medskip

Let $X$ be a smooth projective variety over an algebraically closed field $k$ and $\CD(X)$ the bounded derived category of coherent sheaves on $X$. Given a functor $F$, $\R\mr{F}$ denotes, if it exists, its right derived functor.

\begin{definition}
An object $E$ in $\CD(X)$ is called {\em exceptional} if $\RHom(E,E)=\kk$. A pair $(E,F)$ of exceptional objects is {\em exceptional} if in addition $\RHom(F,E)=0$. 
\end{definition}

\begin{definition}
For an exceptional object $E$ we define\footnote{We ignore the technicalities behind defining a functor as a cone; they are of no importance here, as we apply these functors only to objects.}
the {\em left mutation functor} $\leftMut_E$ on $\CD(X)$ by the triangle of functors
\[
\RHom(E,-) \otimes E \xrightarrow{\mathrm{ev}} \Id \to \leftMut_E.
\]
Dually, we have a {\em right mutation functor} $\rightMut_E$ given by 
\[
\rightMut_E \to \Id \xrightarrow{\mathrm{coev}} \RHom(-,E)^\vee \otimes E.
\]
\end{definition}

\medskip

Let $E$ be an exceptional object in $\CD(X)$. We put
\[
E^\perp=\{F\in\CD(X)\mid\RHom(E,F)=0\}\hspace{5pt}\mbox{and}\hspace{5pt}{}^\perp E=\{F\in\CD(X)\mid\RHom(F,E)=0\}. 
\]

\begin{proposition}
The left mutation functor of an exceptional object $E$ satisfies
\[
\leftMut_E(E) = 0 
\quad\text{and}\quad
\leftMut_E \colon E^\perp \xrightarrow{\sim} {}^\perp E.
\]
Similarly, the right mutation functor satisfies
\[
\rightMut_E(E) = 0
\quad\text{and}\quad
\rightMut_E \colon {}^\perp E \xrightarrow{\sim} E^\perp.
\]
\end{proposition}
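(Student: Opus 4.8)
The plan is to prove, in order, the two vanishing statements and then the two equivalences, the latter realised as a pair of mutually inverse functors between $E^\perp$ and ${}^\perp E$.

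First I would dispose of $\leftMut_E(E)=0$ and $\rightMut_E(E)=0$ by evaluating the two defining triangles on $E$ itself. Setting $F=E$ in $\RHom(E,F)\otimes E\xrightarrow{\mathrm{ev}}F\to\leftMut_E(F)$ and invoking exceptionality $\RHom(E,E)=\kk$, the evaluation becomes the canonical isomorphism $\kk\otimes E\xrightarrow{\sim}E$; its cone $\leftMut_E(E)$ therefore vanishes. The coevaluation triangle yields $\rightMut_E(E)=0$ by the same token.

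Next I would verify the two orthogonality assertions, namely that $\leftMut_E$ carries $E^\perp$ into ${}^\perp E$ and $\rightMut_E$ carries ${}^\perp E$ into $E^\perp$. Here the only input is again $\RHom(E,E)=\kk$: to each defining triangle I apply the derived Hom-functor that detects the target orthogonal and pass to the associated long exact sequence. The term supported on $\langle E\rangle$ collapses to a single Hom-space via $\RHom(E,E)=\kk$, and the evaluation (respectively coevaluation) identifies the connecting map with it; the remaining cohomology then vanishes, placing the mutated object in the claimed orthogonal.

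The substantive point, and the step I expect to be the main obstacle, is to promote these into mutually inverse equivalences $\leftMut_E\colon E^\perp\xrightarrow{\sim}{}^\perp E$ and $\rightMut_E\colon{}^\perp E\xrightarrow{\sim}E^\perp$. I would establish $\rightMut_E\circ\leftMut_E\cong\Id$ on $E^\perp$ and $\leftMut_E\circ\rightMut_E\cong\Id$ on ${}^\perp E$ by splicing the two defining triangles and invoking the octahedral axiom: the composite acquires a correction term built from iterated evaluation and coevaluation maps, and $\RHom(E,E)=\kk$ forces these to cancel, leaving a natural isomorphism with the identity. The delicate part is the bookkeeping of these correction terms together with the naturality of the resulting isomorphisms of functors; once this is in hand, full faithfulness on each orthogonal is formal and, combined with the orthogonality step, gives the two asserted equivalences.
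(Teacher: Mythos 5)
Your first step is fine: evaluating the two defining triangles at $F=E$ and using $\RHom(E,E)=\kk$ does give $\leftMut_E(E)=0$ and $\rightMut_E(E)=0$. But the core of your plan, steps two and three, would fail as written, because with the paper's own definitions the containments you set out to prove are false. If $F\in E^\perp$, then $\RHom(E,F)=0$, the evaluation triangle degenerates, and $\leftMut_E F\cong F$; so on $E^\perp$ the left mutation is naturally isomorphic to the identity, and it does \emph{not} carry $E^\perp$ into ${}^\perp E$. Concretely, on $\P^1$ with $E=\CO$ and $F=\CO(-1)$ one has $F\in E^\perp$ but $\RHom(\CO(-1),\CO)=H^\bullet(\P^1,\CO(1))\neq0$. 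Your long-exact-sequence argument proves the \emph{opposite} vanishing: applying $\RHom(E,-)$ to the evaluation triangle shows $\RHom(E,\leftMut_E F)=0$ for \emph{every} $F$, i.e.\ $\leftMut_E$ always lands in $E^\perp$; whereas applying $\RHom(-,E)$ (the functor that ``detects the target orthogonal'' ${}^\perp E$) turns the $\langle E\rangle$-term into $\RHom(E,F)^\vee$ and leaves the middle term $\RHom(F,E)$ untouched --- nothing forces $\RHom(\leftMut_E F,E)$ to vanish. Step three is mis-oriented for the same reason: on $E^\perp$ one has $\rightMut_E\circ\leftMut_E\cong\rightMut_E$, which is not the identity (in the example above, $\rightMut_{\CO}\,\CO(-1)\cong\CO(1)[-1]$).

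The source of the trouble is that the proposition as printed has the two orthogonals interchanged relative to the definitions just above it; no proof can rescue the printed orientation. (The paper itself states this proposition without proof, as standard background, and its later assertions confirm the correct orientation: $(\leftMut_E F,E)$ is exceptional, and $\rightMut_E\leftMut_E F=F$ precisely when $\RHom(F,E)=0$, i.e.\ $F\in{}^\perp E$.) The true statement is $\leftMut_E\colon{}^\perp E\xrightarrow{\sim}E^\perp$ with inverse $\rightMut_E\colon E^\perp\xrightarrow{\sim}{}^\perp E$. Once you swap domains and targets, your strategy goes through, and more easily than the octahedral bookkeeping you anticipate: for $F\in{}^\perp E$, applying $\RHom(-,E)$ to the evaluation triangle gives $\RHom(\leftMut_E F,E)\cong\RHom(E,F)^\vee[-1]$, so the rotated triangle $F\to\leftMut_E F\to\big(\RHom(E,F)\otimes E\big)[1]$ is identified with the coevaluation triangle of $\leftMut_E F$, whence $\rightMut_E\leftMut_E F\cong F$ directly; the dual computation on $E^\perp$ gives $\leftMut_E\rightMut_E\cong\Id$, and the two equivalences follow.
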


\begin{example}
Since we assume $\RGamma(X,\CO_X)=\kk$, any line bundle on $X$ considered as an element in $\CD(X)$ is exceptional. Since for two line bundles $\mc L$ and $\mc L'$,
\[
\RHom(\CL',\CL)=\Ext^\kbb(\CL',\CL)=\mathrm{H}^\kbb(X,\CL'^{-1}\otimes\CL),
\]
$(\mc L,\mc L')$ defines an exceptional pair if and only if $\CL'^{-1}\otimes\CL$ 
lies in the so-called {\em immaculate locus} 
\[
\Imm:=\Imm(X):=\{\CL\in\Pic(X)\mid\gH^k(X,\CL)=0
\mbox{ for all }k\geq 0\}\subseteq\Pic(X) 
\]
of cohomologically trivial line bundles. 
\end{example}

Let $(E,F)$ be an exceptional pair. The pairs obtained by {\em left} and {\em right mutations}, namely $(\leftMut_E F,E)$ and $(F,\rightMut_F E)$, are again exceptional and generate the same triangulated subcategory as $(E,F)$. Moreover, the left and right mutations are inverse to each other in the sense that
\[
\rightMut_E \leftMut_E F = F
\quad\text{and}\quad
\leftMut_F \rightMut_F E = E.
\]

\begin{remark}
\label{rem:MutationLB}
$\leftMut_E$ and $\rightMut_F$ have a particularly simple expression 
if both pairs $(E,F)$ and $(F,E)$ are orthogonal, that is, $\RHom(E,F)=0= \RHom(F,E)$. In this case $\leftMut_E F = F$ and $\rightMut_F E = E$ so that mutation just interchanges $E$ and $F$.
\end{remark}

\begin{definition}
A sequence $E_1,\ldots,E_n$ of exceptional objects is {\em exceptional} if
\[
\RHom(E_j,E_i)=0\hspace{0.7em}\mbox{for all }j>i.  
\]
An exceptional sequence is {\em full} if it generates $\CD(X)$.
\end{definition}

\begin{example} 
\label{exam:Beilinson}
On $\P^n$, we have the full exceptional sequence of line bundles
\[
\big(\CO,\,\CO(1),\ldots,\CO(n)\big),
\]
the so-called {\em Beilinson sequence}~\cite{beilinson}.
\end{example}

One can think of exceptional sequences in terms of linearly independent subsets of the Grothendieck group; a full sequence yields an identification $K_0(X)\cong\oplus_{i=1}^n\Z E_i$. In particular, the length of any exceptional sequence is bounded by $\rk K_0(X)$. The sequence is {\em maximal} if equality holds. Therefore, fullness implies maximality while the converse is false in general.

\begin{proposition}
Let $E_1,\ldots,E_n$ be exceptional objects in $\CD(X)$, and $1\leq i < n$.
Then $(E_1,\ldots,E_n)$ is a (full) exceptional sequence if and only if
\[
(E_1,\ldots,E_{i-1},\leftMut_{E_i} E_{i+1}, E_i, E_{i+2},\ldots,E_n)
\]
is a (full) exceptional sequence. An analogous statement holds for right mutations.
\end{proposition}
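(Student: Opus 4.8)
The plan is to exploit that the mutation touches only positions $i$ and $i+1$, replacing the consecutive pair $(E_i,E_{i+1})$ by $(\leftMut_{E_i}E_{i+1},E_i)$ while leaving every other entry fixed. Writing $F'=\leftMut_{E_i}E_{i+1}$, I would first sort out which of the vanishing conditions $\RHom(G_b,G_a)=0$ (for $b>a$) are literally shared between the two sequences and which are genuinely new. All conditions internal to $\{E_1,\dots,E_{i-1}\}$, to $\{E_{i+2},\dots,E_n\}$, and between these two blocks are untouched; moreover the conditions $\RHom(E_i,E_k)=0$ for $k<i$ and $\RHom(E_m,E_i)=0$ for $m\geq i+2$ occur verbatim in both sequences. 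This reduces the forward implication to three new vanishings: the pair condition $\RHom(E_i,F')=0$, together with $\RHom(F',E_k)=0$ for $k<i$ and $\RHom(E_m,F')=0$ for $m\geq i+2$.

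Each of these I would extract from the defining triangle
\[
\RHom(E_i,E_{i+1})\otimes E_i\xrightarrow{\mathrm{ev}}E_{i+1}\to F'
\]
by applying a suitable $\RHom$-functor and reading off the resulting long exact sequence, using the standard identities $\RHom(V\otimes A,B)=V^\vee\otimes\RHom(A,B)$ and $\RHom(A,V\otimes B)=V\otimes\RHom(A,B)$ for a complex $V$ of vector spaces. Applying $\RHom(E_i,-)$ turns the evaluation into the identity on $\RHom(E_i,E_{i+1})$, since $\RHom(E_i,E_i)=\kk$; hence $\RHom(E_i,F')=0$, i.e.\ $F'\in{}^\perp E_i$, which simultaneously re-proves exceptionality of the mutated pair. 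Applying the contravariant $\RHom(-,E_k)$ yields a triangle relating $\RHom(F',E_k)$ to $\RHom(E_{i+1},E_k)$ and $\RHom(E_i,E_{i+1})^\vee\otimes\RHom(E_i,E_k)$; for $k<i$ the latter two vanish by hypothesis, forcing $\RHom(F',E_k)=0$. Finally, applying the covariant $\RHom(E_m,-)$ relates $\RHom(E_m,F')$ to $\RHom(E_m,E_{i+1})$ and $\RHom(E_i,E_{i+1})\otimes\RHom(E_m,E_i)$, both vanishing for $m\geq i+2$, so $\RHom(E_m,F')=0$.

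For the converse I would invoke the inverse relation $\rightMut_{E_i}F'=\rightMut_{E_i}\leftMut_{E_i}E_{i+1}=E_{i+1}$: the right mutation of the pair $(F',E_i)$ returns $(E_i,E_{i+1})$, so the reverse direction is exactly the forward direction of the analogous right-mutation statement, proved by the same three long exact sequence arguments with the roles of $\RHom(E_i,-)$ and $\RHom(-,E_i)$ interchanged. Thus it suffices to prove the forward implication for both left and right mutation, and both biconditionals follow formally. The parenthetical word \emph{full} is then handled separately: since the mutated pair generates the same triangulated subcategory as $(E_i,E_{i+1})$ (already recorded above for exceptional pairs) and the remaining entries are common to both sequences, the subcategory generated by the whole mutated sequence equals that generated by the original; hence one generates $\CD(X)$ precisely when the other does.

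The only genuinely delicate point I anticipate is the identification of the evaluation-induced map with the identity in the computation of $\RHom(E_i,F')$ — the crux of the pair condition — but this is precisely the content already packaged in the preceding proposition on mutation functors, so it can be cited rather than redone. Everything else is bookkeeping: the main care required is tracking which $\RHom$-conditions are shared versus new and applying the defining triangle with the correct (co- or contravariant) functoriality.
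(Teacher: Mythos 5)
The paper never proves this proposition: it sits in the preliminaries (Section~\ref{sec:ExObMut}) as recalled background, with~\cite{Rudakov} given as the blanket reference, so there is no in-paper argument to compare against. Judged on its own, your proof is the standard one and its core is correct: the bookkeeping of which orthogonality conditions are shared versus new is right, and each of the three new vanishings $\RHom(E_i,F')=0$, $\RHom(F',E_k)=0$ ($k<i$) and $\RHom(E_m,F')=0$ ($m\geq i+2$) does follow from the long exact sequences obtained by applying $\RHom(E_i,-)$, $\RHom(-,E_k)$ and $\RHom(E_m,-)$ to the defining triangle, exactly as you describe. The converse-via-right-mutation reduction (using $\rightMut_{E_i}\leftMut_{E_i}E_{i+1}=E_{i+1}$) and the fullness argument via equality of the generated subcategories are likewise sound.

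Two points deserve correction. First, a notational slip: from $\RHom(E_i,F')=0$ you conclude ``$F'\in{}^\perp E_i$'', but with the paper's definitions ($E^\perp=\{F\mid\RHom(E,F)=0\}$, ${}^\perp E=\{F\mid\RHom(F,E)=0\}$) this vanishing says $F'\in E_i^{\perp}$; confusingly, the paper's own proposition on mutation functors appears to have the two orthogonals interchanged, so your usage matches the (mis)stated proposition rather than the definitions. Second, and more substantively: an exceptional sequence is by definition a sequence of \emph{exceptional objects}, so besides the three orthogonality conditions you must also verify that $F'=\leftMut_{E_i}E_{i+1}$ is itself exceptional, i.e.\ $\RHom(F',F')=\kk$. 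Your claim that the pair computation ``re-proves exceptionality of the mutated pair'' only establishes the semiorthogonality $\RHom(E_i,F')=0$, not this. The missing piece is easy: apply $\RHom(-,F')$ to the triangle and use $\RHom(E_i,F')=0$ to get $\RHom(F',F')\cong\RHom(E_{i+1},F')$, then apply $\RHom(E_{i+1},-)$ and use $\RHom(E_{i+1},E_i)=0$ to get $\RHom(E_{i+1},F')\cong\RHom(E_{i+1},E_{i+1})=\kk$; alternatively, cite the fact recorded earlier in the paper that mutation of an exceptional pair yields an exceptional pair. With that supplement your proof is complete.
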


\begin{remark}
\label{rem:Mut}
Mutating an exceptional sequence of vector bundles yields honest complexes in $\CD(X)$ in general. There are, however, a few cases of interest where mutation preserves invertibility or at least local freeness up to a shift. Let $(\CO_X,\CL)$ be an exceptional pair of line bundles, that is, $\CL^{-1}$ is immaculate. 
\begin{enumerate}
\item If $\CL$ is also immaculate, then $\CO_X$ and $\CL$ are mutually orthogonal, so (left) mutation means just interchanging the line bundles.
\item If, on the other hand, $\CL$ is globally generated with $\RGamma(X,\CL) = \mr{H}^0(X,\CL)$, then the triangle defining left mutation becomes the short exact sequence
\[
0 \to\leftMut_{\CO_X}\CL[-1]\to\mr{H}^0(X,\CL)\otimes\CO_X\to\CL\to0
\]
so that $\leftMut_{\CO_X} \CL[-1]$ is a vector bundle of rank $\dim H^0(X,\CL)-1$.
\end{enumerate}
\end{remark}

\begin{definition}
Let $\cl=(E_1,\ldots,E_n)$ be an exceptional sequence in $\CD(X)$, and let $K_X$ be the canonical line bundle on $X$. The operation on sequences given by
\[
\helixL(\cl):=(E_n \otimes K_X, E_1,\ldots,E_{n-1})
\]
is called {\em helixing to the left}. Similarly, the sequence
\[
\helixR(\cl):=(E_2,\ldots,E_n, E_1 \otimes K_X^{-1})
\]
is obtained by {\em helixing to the right}.
\end{definition}

\begin{remark}
This definition is slightly non-standard, as one would rather use the Serre functor $S_X=-\otimes K_X[\dim(X)]$. However, we continue to ignore possible shifts to keep exposition tight.
\end{remark}

\begin{proposition}
Exceptional objects $E_1,\ldots,E_n$ in $\CD(X)$ define a (full) exceptional sequence $(E_1,\ldots,E_n)$ if and only if helixing to the left (or right) yields a (full) exceptional sequence.
\end{proposition}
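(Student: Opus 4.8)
The plan is to derive everything from Serre duality on the smooth projective variety $X$, i.e.\ from the natural isomorphism $\RHom(A,B)\cong\RHom(B,S_X A)^\vee$ with Serre functor $S_X=-\otimes K_X[\dim X]$, so that $A\otimes K_X=S_X(A)[-\dim X]$. First I would record that $\helixL$ and $\helixR$ are mutually inverse: reading off the definitions gives $\helixR(\helixL(\cl))=\cl=\helixL(\helixR(\cl))$, since the displaced object is tensored successively by $K_X$ and $K_X^{-1}$. Consequently, for each of the two ``if and only if'' assertions it is enough to prove the forward implications for $\helixL$ and $\helixR$; the backward implication for $\helixL$ is the forward implication for $\helixR$ applied to the sequence $\helixL(\cl)$, and vice versa.

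For exceptionality, take $\helixL(\cl)=(E_n\otimes K_X,E_1,\ldots,E_{n-1})$ with $\cl$ exceptional. All vanishings among $E_1,\ldots,E_{n-1}$ and the exceptionality of each $E_i$ are inherited, and $E_n\otimes K_X$ is exceptional because tensoring by a line bundle is an autoequivalence. The only genuinely new conditions, coming from the displaced object sitting at the front, are $\RHom(E_m,E_n\otimes K_X)=0$ for $1\le m\le n-1$. By Serre duality,
\[
\RHom(E_m,E_n\otimes K_X)=\RHom(E_m,S_X E_n)[-\dim X]=\RHom(E_n,E_m)^\vee[-\dim X],
\]
which vanishes since $n>m$. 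The argument for $\helixR$ is the mirror image: the new conditions $\RHom(E_1\otimes K_X^{-1},E_m)=0$ for $2\le m\le n$ turn, via Serre duality, into $\RHom(E_m,E_1)^\vee[-\dim X]=0$. This settles the non-full version of the statement.

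For fullness, assume $\cl$ is full, so the $E_i$ give a semiorthogonal decomposition $\CD(X)=\langle\langle E_1\rangle,\ldots,\langle E_n\rangle\rangle$. For $\helixL$ set $\mathcal B=\langle E_1,\ldots,E_{n-1}\rangle$, an admissible subcategory whose left orthogonal is ${}^\perp\mathcal B=\langle E_n\rangle$ by fullness. The key input is the standard consequence of Serre duality (cf.~\cite{Rudakov}) that the Serre functor carries one orthogonal of an admissible subcategory onto the other, $\mathcal B^\perp=S_X({}^\perp\mathcal B)$. Hence $\mathcal B^\perp=\langle E_n\otimes K_X\rangle$ and $\CD(X)=\langle\mathcal B^\perp,\mathcal B\rangle=\langle E_n\otimes K_X,E_1,\ldots,E_{n-1}\rangle$, so $\helixL(\cl)$ generates $\CD(X)$; with the exceptionality above it is a full exceptional sequence. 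The case of $\helixR$ is symmetric, applying ${}^\perp\mathcal B'=S_X^{-1}(\mathcal B'^\perp)$ to $\mathcal B'=\langle E_2,\ldots,E_n\rangle$, and the two converse implications again follow from the inverse relation. Alternatively, on a full collection $\helixR$ is realised by the iterated right mutation of $E_1$ past $E_2,\ldots,E_n$, which equals $E_1\otimes K_X^{-1}$ up to shift, so fullness could instead be extracted from the earlier proposition that mutations preserve full exceptionality.

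The genuinely routine part is the Serre-duality bookkeeping of the middle paragraph; the two points demanding care are the suppressed shift $[\dim X]$ in $S_X$ and the direction of the canonical twist ($K_X$ for $\helixL$ versus $K_X^{-1}$ for $\helixR$), which must be matched to the definitions. The hard part, in that it cannot be handled by a bare $\Hom$-vanishing, is the fullness step: it requires knowing that an exceptional collection spans an admissible subcategory whose left and right orthogonals are interchanged by the Serre functor. Everything else is formal.
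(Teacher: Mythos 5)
Your proof is correct. Note that the paper offers no proof of this proposition at all: it appears in the background Section~2, whose content is deferred to the literature (``A good reference is~\cite{Rudakov}''), so there is no in-paper argument to compare against. Your route --- reducing both equivalences to the forward implications via $\helixR\circ\helixL=\mathrm{id}=\helixL\circ\helixR$, killing the new backwards $\RHom$'s by Serre duality, and obtaining fullness from $S_X({}^\perp\mathcal{B})=\mathcal{B}^\perp$ for the admissible subcategory $\mathcal{B}=\langle E_1,\ldots,E_{n-1}\rangle$ --- is exactly the standard argument the paper implicitly invokes, and you correctly flag the one genuinely non-formal input, namely that a subcategory generated by an exceptional sequence is admissible (Bondal). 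Your alternative fullness argument via iterated mutation is also consistent with the paper, which records in the remark following this proposition that helixing a full sequence agrees, up to shift, with the composition of mutations $\leftMut_{E_1}\circ\cdots\circ\leftMut_{E_{n-1}}E_n$.
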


\begin{remark}
For full exceptional sequences, helixing is actually nothing else 
than a special instance of mutating: If $(E_1,\ldots,E_n)$ is a full exceptional sequence, then $E_n \otimes K_X$ and $\leftMut_{E_1} \circ \ldots \circ \leftMut_{E_{n-1}} E_n$ differ only by a shift. An analogous statement holds for right mutations.
\end{remark}

A {\em semi-orthogonal decomposition} of $\CD(X)$ is a collection $\CT_1,\ldots,\CT_n$ of full triangulated subcategories generating $\CD(X)$ such that $\Hom_{\CD(X)}(T_j,T_i) = 0$ for all $1\leq i<j\leq n$ and objects $T_i\in\CT_i$, $T_j\in\CT_j$. For instance, a full exceptional sequence $(E_1,\ldots,E_n)$ gives rise to the semi-orthogonal decomposition $\CT_1,\ldots,\CT_n$ of $\CD(X)$ where $\CT_i=\langle E_i\rangle$ is the triangulated category generated by $E_i$.

\begin{example}
\label{exam:OrlovType}
Consider a vector bundle $\CE\to Y$ of rank $\rk(\CE) = \lV+1$ with associated projective bundle
\[
\pi\colon X=\P(\CE):=\Proj(\SymE)\to Y  
\]
and relative hyperplane section $H$ determined by $\pi_*\CO(H)=\CE^\vee$. By~\cite{orlov92}, we have the semi-orthogonal decomposition 
\[
\CD(X) = \sod{ \pi^* \CD(Y), \pi^* \CD(Y) \otimes \CO(H), \ldots, \pi^* \CD(Y) \otimes \CO(\lV H)}
\]
given by $\CT_i=\langle\pi^* \CD(Y) \otimes \CO(i H)\rangle$, $i=0,\ldots,r$. For instance, let $(F_{i,0},\ldots,F_{i,\lH})$, $0 \leq i \leq \lV$, be (full) exceptional sequences on $Y$. Then 
\begin{equation}
\label{eq:OrlovType}
(\pi^* F_{0,1}, \ldots, \pi^* F_{1,\lH}, \ldots, \pi^* F_{\lV,1} \otimes \CO(\lV H), \ldots, \pi^* F_{\lV,\lH} \otimes \CO(\lV H))
\end{equation}
defines a (full) exceptional sequence on $X$. 
\end{example}

\begin{definition}
An exceptional sequence on $X=\P(\CE)$ is of {\em Orlov type} if it has the form~\eqref{eq:OrlovType}. 
\end{definition}

\begin{definition}
A {\em strongly exceptional} sequence is an exceptional sequence which in addition satisfies
\[ 
\R^k\!\Hom(E_j,E_i)=\Hom_{\CD(X)}(E_j,E_i[k])=0\mbox{ for all $k>0$ and $i$, $j$.} 
\] 
\end{definition}

One reason for studying full and strongly exceptional sequences is that any such sequence $(E_1,\ldots,E_n)$ gives rise to the {\em tilting object} $T = \bigoplus_i E_i$ of $\CD(X)$. By definition, this means that $\Hom_{\CD(X)}(T,T[i]) = 0$ for $i \neq 0$. Further, $\CD(X)$ is the smallest triangulated subcategory containing $T$ which is full and thick, i.e., closed under taking direct summands.

\begin{remark}
Note that helixing of an exceptional sequence will not preserve strongness in general. We call a full and strongly exceptional sequence $(E_1,\ldots,E_m)$ {\em strongly cyclic} if it does. (This definition is not completely standard as we assume fullness.)
\end{remark}

\section{Exceptional posets}
\label{sec:ExPosets}
From now on, the term exceptional set / sequence will always refer to an exceptional set / sequence of line bundles. 

\medskip

We study the combinatorics of an exceptional set via its poset of exceptional orders. In particular, we will derive a criterion for strong exceptionality.

\subsection{The poset associated with an exceptional set}
\label{subsec:definePosetPs}
We call a subset $\cl\subseteq\Pic(X)$ {\em exceptional} if there exists a total order $<_\tO$ on $\cl$ turning $\cl$ into an exceptional sequence. Any such order will be referred to as {\em exceptional}. 

\medskip

For an exceptional set $\cl$ we define its {\em associated poset}
\[
\pO=\pO(\cl):=\bigcap\{\tO\mid\tO\mbox{ is an exceptional order on }\cl\}\cup\{(\cA,\cA)\mid\cA\in\cl\}
\subseteq\cl\times\cl
\]
which induces a partial order $\leq_\cl$ on $\cl$. To describe $\leq_\cl$ in terms of a generating set, we consider
\[
\pF=\pF(\cl):=\{(A,B)\in\cl\times\cl\kst
A-B\not\in-\Imm(X)\}\subseteq\cl\times\cl.
\]
Since $A-B\not\in-\Imm(X)$ implies $A\leq_{\tO}B$ for any exceptional order $\tO$ on $\cl$, $\pF\subseteq\pO$. We write $\cA\leq_{\pF}\cB$ if $(\cA,\cB)\in\pF$ and $\langle\pF\rangle$ for the poset given by the {\em transitive hull} of $\pF$ inside $\pO$. By definition, this is the set consisting of $\pF$ and all pairs $(\cA,\cB)\in\cl\times\cl$ such that there exist $C_1,\ldots,C_m\in\cl$ with $\cA\leq_FC_1\leq_FC_2\leq_F\cdots\leq_FC_m\leq_F\cB$.

\begin{proposition}
\label{prop:pFpO}
Let $\cl$ be an exceptional set and $\pF=\pF(\cl)$. 
\begin{enumerate}
\item[\rm (i)] A total order $\tO$ on $\cl$ is exceptional if and only if
$\pF\subseteq\tO$.
\item[\rm (ii)] $\langle\pF\rangle=\pO$.
\end{enumerate}
\end{proposition}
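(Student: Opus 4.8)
The plan is to exploit the order-extension principle, so I would first record a dictionary between $\pF$ and backward $\Ext$-vanishing, deduce (i) directly from it, and then bootstrap (i) into (ii). The underlying translation is that for $A,B\in\cl$ one has $\RHom(A,B)=\RGamma(X,B-A)$, whence
\[
(A,B)\in\pF\iff B-A\notin\Imm(X)\iff \RHom(A,B)\neq 0 .
\]
Thus $(A,B)\in\pF$ is exactly the obstruction preventing $A$ from sitting \emph{after} $B$ in an exceptional sequence, since then the backward group $\RHom(A,B)$ would be forced to vanish; this is the inclusion $\pF\subseteq\pO$ already noted before the statement. I would also observe at this point that $\pO$, being an intersection of total (hence antisymmetric, transitive) orders together with the diagonal, is itself a partial order, and that the family of exceptional orders is nonempty because $\cl$ is assumed exceptional.

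For (i), the forward implication is immediate: if $\tO$ is exceptional then $\pF\subseteq\pO\subseteq\tO$, as $\pO$ is an intersection containing $\tO$ as one factor. For the converse I would take a total order $\tO$ with $\pF\subseteq\tO$ and any pair $A<_\tO B$, and check $\RHom(B,A)=0$, i.e.\ $A-B\in\Imm(X)$. If this failed, then $A-B\notin\Imm(X)$ reads precisely as $(B,A)\in\pF\subseteq\tO$, giving $B\leq_\tO A$ and contradicting $A<_\tO B$ by antisymmetry. Hence every $\tO$-ordered pair satisfies the exceptionality condition, so $\tO$ is exceptional.

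For (ii), one inclusion is formal: since $\pF\subseteq\pO$ and $\pO$ is transitive, the transitive hull satisfies $\langle\pF\rangle\subseteq\pO$. The reverse inclusion $\pO\subseteq\langle\pF\rangle$ I would prove by contraposition. Fix $A\neq B$ with $(A,B)\notin\langle\pF\rangle$; the goal is to produce an exceptional order in which $A$ does \emph{not} precede $B$, which certifies $(A,B)\notin\pO$. Because $\langle\pF\rangle$ is a finite partial order with $A\not\leq_{\langle\pF\rangle}B$, adjoining the pair $(B,A)$ and taking the transitive closure creates no cycle (a cycle through the new edge would require a path $A\to\cdots\to B$, i.e.\ $A\leq_{\langle\pF\rangle}B$), so it yields a partial order refining $\langle\pF\rangle$ with $B<A$. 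I would then extend this to a total order $\tO$ by the order-extension principle. Since $\pF\subseteq\langle\pF\rangle\subseteq\tO$, part (i) makes $\tO$ exceptional, while $B<_\tO A$ gives $(A,B)\notin\tO$ and hence $(A,B)\notin\pO$. Combining both inclusions gives $\langle\pF\rangle=\pO$.

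The only genuinely non-formal ingredient, and the step I would expect to require the most care, is this last construction of a linear extension realizing a prescribed order on an $\langle\pF\rangle$-incomparable pair; the remainder is bookkeeping with the definitions. Finiteness of $\cl$ (its cardinality is bounded by $\rk K_0(X)$) keeps the transitive hulls and the extension entirely elementary, so no appeal to the axiom of choice is needed.
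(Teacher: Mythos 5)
Your proposal is correct and takes essentially the same route as the paper: part (i) is the identical dictionary argument between $\pF$ and backward $\RHom$-vanishing, and part (ii) adjoins $(B,A)$ to $\langle\pF\rangle$, checks that no cycle arises, and extends to a total order which is exceptional by (i). Your inline no-cycle argument is precisely the content of the paper's Lemma~\ref{lem:ExtendingPosets} on extending partial orders, which the paper likewise combines with the (implicit, finite) order-extension principle.
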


\begin{proof}
(i) By design, $\pF\subseteq\tO$ for any exceptional order $\tO$ on $\cl$. Conversely, assume that $\pF\subseteq\tO$ for some total order $\tO$ on $\cl$. If $\tO$ were not exceptional, then there would exist $A$, $B\in\cl$ with $A<_\tO B$, but $B-A\not\in-\Imm(X)$. Hence $(B,A)\in\pF(\cl)\subseteq\tO$, that is, $B<_\tO A$, contradiction.

\medskip

(ii) Assume that $(A,B)\notin\langle\pF\rangle$. In view of Lemma~\ref{lem:ExtendingPosets} below, there exists a total order $\tO$ containing $\langle\pF\rangle$ and $(B,A)$. 
In particular, $(A,B)\notin\tO$. As $\tO$ is exceptional by (i), $(A,B)\notin\pO$ whence $\pO\setminus\langle\pF\rangle=\varnothing$.
\end{proof}

As we could not find a suitable reference we include a proof of the following folklore

\begin{lemma}[Extension of partial orders]
\label{lem:ExtendingPosets}
Let $P\subseteq X\times X$ be a partial order on a set $X$ and assume that for some $x\not=x'\in X$, $(x',x)\notin P$. Then there exists a partial order $P'$ with $(x,x')\in P'$ which extends $P$.
\end{lemma}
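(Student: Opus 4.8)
The plan is to take $P'$ to be the transitive closure of $P\cup\{(x,x')\}$; this is the smallest relation that both extends $P$ and contains $(x,x')$, so if \emph{any} extension works, this one does. Reflexivity is immediate since $P\subseteq P'$, and transitivity holds by construction. The only genuine content is \emph{antisymmetry}, and this is where the hypothesis $(x',x)\notin P$ must enter.

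First I would obtain an explicit description of $P'$. Writing $\le_P$ for the relation $P$, any pair in $P'$ arises from a finite chain whose steps are either $\le_P$-comparabilities or copies of the new edge $x\le x'$. Because $P$ is already transitive, consecutive $\le_P$-steps collapse into one; and the edge $x\le x'$ cannot be used twice, for between two uses the chain would produce a $\le_P$-path from $x'$ back to $x$, forcing $(x',x)\in P$ against our hypothesis. Hence every pair of $P'$ is either already in $P$, or of the form $(a,b)$ with $a\le_P x$ and $x'\le_P b$ (allowing $a=x$ or $x'=b$ via reflexivity), i.e.
\[
P'=P\cup\{(a,b)\mid a\le_P x\ \text{and}\ x'\le_P b\}.
\]

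With this description antisymmetry reduces to a short case check. Suppose $(a,b),(b,a)\in P'$ with $a\neq b$. If both lie in $P$, antisymmetry of $P$ already forces $a=b$. In every remaining case at least one of the two pairs uses the new edge, and chaining the two relations through $x$ and $x'$ yields $x'\le_P x$, i.e.\ $(x',x)\in P$ --- precisely the excluded relation. For instance, if $(a,b)\in P$ while $(b,a)$ is new, then $x'\le_P a\le_P b\le_P x$; the symmetric case is identical; and if both pairs are new, then $x'\le_P b\le_P x$. Thus no such pair with $a\neq b$ exists, and $P'$ is a partial order extending $P$ with $(x,x')\in P'$.

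The main obstacle is conceptual rather than computational: one has to recognise that $(x',x)\notin P$ is exactly the condition preventing the transitive closure from creating a two-cycle, and to package this as the observation that the added edge is used at most once in any chain. Once that normal form is in place, the verification of antisymmetry is a routine case analysis, each branch collapsing to the forbidden relation $(x',x)\in P$.
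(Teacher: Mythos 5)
Your proof is correct and follows essentially the same route as the paper: both take $P'$ to be the transitive hull of $P\cup\{(x,x')\}$, note that reflexivity and transitivity are automatic, and reduce antisymmetry to the observation that any chain or cycle using the new edge produces a $P$-path from $x'$ to $x$, contradicting $(x',x)\notin P$. Your explicit normal form $P'=P\cup\{(a,b)\mid a\le_P x,\ x'\le_P b\}$ is a slightly more structured packaging (and makes explicit the ``edge used at most once'' point that the paper leaves implicit when splitting its cyclic chains), but the underlying argument is the same.
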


\begin{proof}
Let $Q:=P\cup\{(x,x')\}$ and write $q\leq_Qq'$ if $(q,q')\in Q$. Let $P':=\langle Q\rangle$ be the transitive hull of $Q$. Since $P$ is a partial order on $X$, we have $(x,x)\in P\subseteq P'$ for all $x\in X$.

\medskip

It remains to check the anti-symmetry of $P'$. Let $(y,z)$, $(z,y)\in P'$. Then there exist $q_1,\ldots,q_s$ and $q_1',\ldots,q_t'$ with 
\[
y\leq_Qq_1\leq_Q\cdots\leq_Q q_s\leq_Qz 
\hspace{1em}\mbox{and}\hspace{1em}
z\leq_Qq_1'\leq_Q\cdots\leq_Q q_t'=y.
\]
This gives $Q$-chains $y\leq_Q\cdots\leq_Qz\leq_Q\cdots\leq_Qy$ and $z\leq_Q\cdots\leq_Qy\leq_Q\cdots\leq_Qz$. 

\medskip

If none of these chains contains a pair of two consecutive elements equal to $x$ and $x'$, we can replace all $\leq_Q$-relations by $\leq_P$-relations and conclude $y=z$. Otherwise, such a pair, say in $y\leq_Q\cdots\leq_Qy$, yields the chains $x'\leq_P\cdots\leq_Py$ and $y\leq_P\cdots\leq_Px$ whence $x'\leq_P x$, contradiction.
\end{proof}

To investigate $\pF(\cl)$ further, we introduce the map
\[
\delta_\cl\colon\cl\times\cl\to
\Imm\cup\{\CO_X\}\cup-\Imm,\hspace{0.8em}\delta_\cl(\cA,\cB)=\cB-\cA.
\]
This is well-defined since $\cl$ is exceptional. Namely, for all distinct elements $A$, $B\in\cl$ there exists an exceptional order $\tO$ with either $A<_{\tO}B$ or $B<_{\tO}A$, that is, $\delta_\cl(A,B)\in-\Imm\cup\Imm$. By definition, we immediately obtain the

\begin{lemma}
\label{lem:deltaF}
Let $\cl$ be an exceptional set. Then
\[
\pF(\cl)=\delta_\cl^{-1}\big(\{\CO_X\}\cup-\Imm(X)\setminus(\Imm(X)\cap-\Imm(X))\big).
\]
\end{lemma}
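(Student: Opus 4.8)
The plan is to prove the equality of the two subsets of $\cl\times\cl$ by unwinding both sides to a pointwise condition on the difference $\cB-\cA$. Since $\delta_\cl(\cA,\cB)=\cB-\cA$ takes values in the codomain $\Imm\cup\{\CO_X\}\cup-\Imm$, for any subset $S$ of this codomain one has $\delta_\cl^{-1}(S)=\{(\cA,\cB)\in\cl\times\cl\mid \cB-\cA\in S\}$. Thus the whole assertion reduces to recognising the set $\{\CO_X\}\cup\big(-\Imm\setminus(\Imm\cap-\Imm)\big)$ appearing on the right as precisely the complement of $\Imm$ inside the codomain of $\delta_\cl$.

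First I would translate the defining condition of $\pF$ into the language of $\delta_\cl$. By definition $(\cA,\cB)\in\pF$ exactly when $\cA-\cB\notin-\Imm$, and flipping the sign shows $\cA-\cB\in-\Imm$ is equivalent to $\cB-\cA\in\Imm$, i.e.\ to $\delta_\cl(\cA,\cB)\in\Imm$. Hence $(\cA,\cB)\in\pF$ if and only if $\delta_\cl(\cA,\cB)\notin\Imm$, giving the clean reformulation $\pF=\delta_\cl^{-1}\big((\Imm\cup\{\CO_X\}\cup-\Imm)\setminus\Imm\big)$. Then I would simplify this complement. The one fact needed is that the trivial class $\CO_X$ lies in neither $\Imm$ nor $-\Imm$: indeed $\RGamma(X,\CO_X)=\kk\neq0$ rules out $\CO_X\in\Imm$, and since $-\CO_X=\CO_X$ the same computation gives $\CO_X\notin-\Imm$. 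Consequently removing $\Imm$ from the union leaves $\{\CO_X\}$ intact and trims $-\Imm$ down to $-\Imm\setminus\Imm=-\Imm\setminus(\Imm\cap-\Imm)$, which is exactly the target set of the lemma.

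As a consistency check I would verify the diagonal: $\delta_\cl(\cA,\cA)=0=\CO_X$ lies in the target set, and correspondingly $(\cA,\cA)\in\pF$ because $0\notin-\Imm$, so both sides contain the diagonal. The argument is essentially tautological, which matches the paper's remark that the lemma holds ``by definition''; the only points demanding care — and the only place I would call a mild obstacle — are the bookkeeping of signs (note that $\delta_\cl$ reverses the roles of $\cA$ and $\cB$ relative to the difference $\cA-\cB$ used in defining $\pF$) and the correct parsing of the set expression $\{\CO_X\}\cup-\Imm\setminus(\Imm\cap-\Imm)$ as $\{\CO_X\}\cup\big(-\Imm\setminus(\Imm\cap-\Imm)\big)$. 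Once these conventions are fixed, the equality $\pF(\cl)=\delta_\cl^{-1}\big(\{\CO_X\}\cup-\Imm\setminus(\Imm\cap-\Imm)\big)$ follows immediately.
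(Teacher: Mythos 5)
Your proposal is correct and takes essentially the same route as the paper, which offers no written proof at all but states that the lemma follows ``by definition'': your unwinding — observing that $(\cA,\cB)\in\pF$ iff $\delta_\cl(\cA,\cB)=\cB-\cA\notin\Imm(X)$, then intersecting the complement of $\Imm(X)$ with the codomain $\Imm\cup\{\CO_X\}\cup-\Imm$ and using $\RGamma(X,\CO_X)=\kk\neq0$ to keep $\{\CO_X\}$ and trim $-\Imm$ to $-\Imm\setminus(\Imm\cap-\Imm)$ — is exactly that definitional argument made explicit. The sign bookkeeping and the parsing of the set expression are handled correctly, so there is nothing to add.
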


\subsection{Effective exceptional sets}
\label{subsec:Div}
Let $\Eff(\cl)\subseteq\pF(\cl)$ be the subset of pairs $(A,B)$ with 
\[
B-A\in\Eff(X):=\{\CL\in\Pic(X)\mid\mr H^0(X,\CL)\not=0\};
\]
we also write $A\leq_{\mathrm{eff}}B$. Put differently, 
\[
\Eff(\cl)=\delta^{-1}_\cl\big(\{\CO_X\}\cup(\Eff(X)\cap-\Imm(X))\big)
\]

\begin{definition}
We call an exceptional set $\cl$ {\em \effec} if $\Eff(\cl)=\pF(\cl)$.
\end{definition}

Recall from Section~\ref{sec:ExObMut} that an exceptional sequence $(\CL_1,\ldots,\CL_N)$ was called strongly exceptional if for all $1\leq i,j\leq N$, $\CL_i^{-1}\otimes\CL_j$ lies in the {\em acyclic locus} 
\[
\nAcyc(X):=\{\CL\in\Pic X\mid\gH^k(X,\CL)=0
\mbox{ for all }k\geq 1\}.
\]
In particular, strong exceptionality is a condition on the underlying exceptional set. Via the generating set of its associated poset we can characterise this as follows.

\begin{proposition}
\label{prop:StrongExcep}
An exceptional set $\cl$ is strongly exceptional if and only if 
\[
\delta_\cl\big(\pF(\cl)\big)\subseteq\nAcyc(X).
\]
\end{proposition}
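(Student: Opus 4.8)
I want to show the equivalence of: (a) $\cl$ is strongly exceptional, i.e.\ $\CL_i^{-1}\otimes\CL_j\in\nAcyc(X)$ for \emph{all} pairs $i,j$; and (b) the weaker-looking condition $\delta_\cl(\pF(\cl))\subseteq\nAcyc(X)$, which only constrains the differences coming from the generating set $\pF(\cl)$ of the poset. The forward direction (a)$\Rightarrow$(b) should be immediate: $\pF(\cl)\subseteq\cl\times\cl$, so if every difference is acyclic then in particular those indexed by $\pF(\cl)$ are. The real content is the converse (b)$\Rightarrow$(a), where I must propagate acyclicity from the generators to \emph{all} ordered pairs.

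So first I would fix an arbitrary ordered pair $(\cA,\cB)\in\cl\times\cl$ and argue that $\cB-\cA\in\nAcyc(X)$. The case $\cA=\cB$ is trivial since the difference is $\CO_X$, which is acyclic. For $\cA\neq\cB$, the idea is to use Proposition~\ref{prop:pFpO}(ii), namely $\langle\pF\rangle=\pO$, together with the fact that the associated poset $\pO$ compares \emph{every} pair of distinct elements. Indeed, since $\cl$ is exceptional, for distinct $\cA,\cB$ exactly one of $\delta_\cl(\cA,\cB)$, $\delta_\cl(\cB,\cA)$ lies in $-\Imm$; equivalently, exactly one of the pairs $(\cA,\cB)$, $(\cB,\cA)$ lies in $\pF\subseteq\pO=\langle\pF\rangle$. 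So after possibly swapping $\cA$ and $\cB$, I may assume $(\cA,\cB)\in\langle\pF\rangle$, which by definition of the transitive hull means there is a chain
\[
\cA\leq_{\pF}C_1\leq_{\pF}\cdots\leq_{\pF}C_m\leq_{\pF}\cB
\]
with all consecutive pairs in $\pF$.

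The key step is then to telescope this chain. Writing $\cB-\cA=(C_1-\cA)+(C_2-C_1)+\cdots+(\cB-C_m)$, hypothesis (b) tells me each summand lies in $\nAcyc(X)$, i.e.\ each has vanishing higher cohomology. What I need is that the \emph{sum} is again acyclic. This is exactly where the hypothesis that $\cl$ is exceptional does further work: because $\cA,\cB$ are both in $\cl$, the difference $\cB-\cA$ itself lies in $\{\CO_X\}\cup\Imm\cup-\Imm$ (as noted in the discussion of $\delta_\cl$), so its higher cohomology is either entirely zero or is controlled by whether it lies in $\Imm$. Concretely, if $(\cA,\cB)\in\pF$ I am done by hypothesis directly; and if $(\cA,\cB)\notin\pF$ while $(\cA,\cB)\in\langle\pF\rangle$, then $\cB-\cA\notin\{\CO_X\}\cup-\Imm$ by Lemma~\ref{lem:deltaF}, forcing $\cB-\cA\in\Imm\cap\Eff$-type locus; since $\Imm(X)\subseteq\nAcyc(X)$ (immaculate line bundles have \emph{all} cohomology vanishing, in particular the higher cohomology), this pair is automatically acyclic. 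The remaining pairs to check are those with $(\cB,\cA)\in\pF$, i.e.\ $\cB-\cA\in-\Imm$, and here $-\Imm\subseteq\nAcyc$ as well since immaculacy of $\cA-\cB$ gives vanishing of $\gH^k(X,\cB-\cA)$ for $k\geq 1$ precisely when... this is the delicate point.

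\textbf{The main obstacle.} The genuine difficulty is that $\nAcyc(X)$ is \emph{not} additively closed, so a naive telescoping of the chain does not work — acyclicity of each step need not give acyclicity of the total difference. I expect the resolution to be that one never actually needs to add up acyclic pieces: every pair $(\cA,\cB)$ is already classified by $\delta_\cl$ into $\{\CO_X\}\cup\Imm\cup-\Imm$, and the content of (b) combined with Lemma~\ref{lem:deltaF} is that $\pF(\cl)$ captures exactly the pairs landing in $\{\CO_X\}\cup\bigl(-\Imm\setminus\Imm\bigr)$. Thus the pairs \emph{not} in $\pF$ land in $\Imm\cap(-\Imm)$ or in $\Imm$, which lie in $\nAcyc$ unconditionally since $\Imm\subseteq\nAcyc$. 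So the real task is bookkeeping: decompose $\cl\times\cl$ according to where $\delta_\cl$ sends each pair, observe that only the $\pF$-pairs carry an extra constraint, and invoke hypothesis (b) for those while the complementary pairs are handled by $\Imm\subseteq\nAcyc$. The careful part will be making the case distinction from Lemma~\ref{lem:deltaF} airtight — in particular treating the overlap $\Imm\cap(-\Imm)$ correctly — rather than any cohomological computation.
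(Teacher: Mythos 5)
Your closing resolution is precisely the paper's proof: for every ordered pair of distinct elements, either the pair lies in $\pF(\cl)$ and the hypothesis applies directly, or (by Lemma~\ref{lem:deltaF} and the classification of $\delta_\cl$) its difference lies in $\Imm(X)\subseteq\nAcyc(X)$ — so no telescoping along chains in $\langle\pF\rangle$ is ever needed, and the paper's argument is exactly this dichotomy, phrased via a choice of exceptional order. One caution: your intermediate claim that $-\Imm(X)\subseteq\nAcyc(X)$ is false in general (were it true, every exceptional set would automatically be strongly exceptional and the proposition would be vacuous; cf.\ the set $\mc F$ in Figure~\ref{fig:CF}), but since you flag and then discard that step in favour of the correct bookkeeping, the argument you settle on is sound and identical to the paper's.
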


\begin{proof}
Since $\RGamma(X,\CO_X)=\kk$ only the converse needs proof; it is enough to consider $A\not=B\in\cl$. We assume without loss of generality that $A<_\tO B$ for some exceptional order $\tO$. In particular, $A-B\in\Imm(X)\subseteq\nAcyc(X)$. If $B-A\in\Imm(X)$, we are done. Otherwise, $(A,B)\in\pF$ so that by assumption, $\delta_\cl(A,B)=B-A\in\nAcyc(X)$.
\end{proof}

\begin{corollary}
\label{cor:ExSecPoset}
If an exceptional set $\cl$ is strongly exceptional, then
\[
\pF(\cl)=\Eff(\cl), 
\]
that is, a strongly exceptional sequence is \effec. 

\medskip

If in addition $\Eff(X)\cap-\Imm(X)\subseteq\nAcyc(X)$, then the converse holds, too.
\end{corollary}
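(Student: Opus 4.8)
The plan is to unravel the definitions of $\pF(\cl)$ and $\Eff(\cl)$ as preimages under $\delta_\cl$ and to play the description of $\pF(\cl)$ from Lemma~\ref{lem:deltaF} against the cohomological criterion for strong exceptionality in Proposition~\ref{prop:StrongExcep}. The inclusion $\Eff(\cl)\subseteq\pF(\cl)$ is built into the definition, so throughout only the reverse inclusion is at stake. It suffices to treat a pair $(A,B)\in\pF(\cl)$ with $A\neq B$, since the diagonal lies in both sets via $\delta_\cl(A,A)=\CO_X$.

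For the first assertion, I would assume $\cl$ is strongly exceptional and take $(A,B)\in\pF(\cl)$ with $A\neq B$. By Lemma~\ref{lem:deltaF} we have $\delta_\cl(A,B)=B-A\in-\Imm(X)\setminus\Imm(X)$, so in particular $B-A\notin\Imm(X)$. On the other hand, Proposition~\ref{prop:StrongExcep} gives $\delta_\cl(\pF(\cl))\subseteq\nAcyc(X)$, whence $B-A\in\nAcyc(X)$, i.e.\ $\gH^k(X,B-A)=0$ for all $k\geq1$. The key step is that these two facts together force $\mr H^0(X,B-A)\neq0$: were $\mr H^0$ to vanish as well, then all cohomology of $B-A$ would vanish, contradicting $B-A\notin\Imm(X)$. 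Hence $B-A\in\Eff(X)\cap-\Imm(X)$, so $(A,B)\in\Eff(\cl)$, and $\pF(\cl)=\Eff(\cl)$ follows.

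For the converse under the extra hypothesis $\Eff(X)\cap-\Imm(X)\subseteq\nAcyc(X)$, I would again route the argument through Proposition~\ref{prop:StrongExcep}: assuming $\cl$ is \effec, that is $\pF(\cl)=\Eff(\cl)$, I compute $\delta_\cl(\pF(\cl))=\delta_\cl(\Eff(\cl))\subseteq\{\CO_X\}\cup(\Eff(X)\cap-\Imm(X))$. Now $\CO_X\in\nAcyc(X)$ because $\RGamma(X,\CO_X)=\kk$ is concentrated in degree $0$, and the remaining classes lie in $\nAcyc(X)$ precisely by the added assumption. Thus $\delta_\cl(\pF(\cl))\subseteq\nAcyc(X)$, and Proposition~\ref{prop:StrongExcep} yields strong exceptionality.

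The whole argument is essentially a bookkeeping exercise with the three loci $\Imm$, $\Eff$ and $\nAcyc$; I expect no serious obstacle beyond the degree-counting step in the first part, where one must notice that acyclicity (no higher cohomology) combined with non-immaculacy (some cohomology) pins the nonvanishing down to $\mr H^0$, which is exactly effectivity.
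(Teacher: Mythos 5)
Your proof is correct and is essentially the paper's own argument: both directions are routed through Lemma~\ref{lem:deltaF} and Proposition~\ref{prop:StrongExcep}, and your key step (acyclicity combined with non-immaculacy forces $H^0(X,B-A)\neq0$) is exactly the paper's one-line inclusion $\nAcyc(X)\subseteq\Imm(X)\cup\Eff(X)$, spelled out explicitly. Nothing is missing.
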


\begin{proof}
Since $\nAcyc(X)\subseteq\Imm(X)\cup\Eff(X)$, Proposition~\ref{prop:StrongExcep} and Lemma~\ref{lem:deltaF} show that
\[
\delta_\cl\big(\pF(\cl)\big)\subseteq\{\mc O_X\}\cup\big(\Eff(X)\cap-\Imm(X)\big).
\]
In particular, $\pF(\cl)\subseteq\Eff(\cl)$. 

\medskip

Conversely, $\pF(\cl)=\Eff(\cl)$ implies
\[
\delta\big(\pF(\cl)\big)\subseteq\{\mc O_X\}\cup\big(\Eff(X)\cap-\Imm(X)\big).
\]
If $\Eff(X)\cap-\Imm(X)\subseteq\nAcyc(X)$, $\cl$ is strongly exceptional by Proposition~\ref{prop:StrongExcep}.
\end{proof}

\begin{remark}
As $\Eff(X)$ is a semigroup,
$\Eff(\cl)$ is transitive so that $\pO(\cl)=\Eff(\cl)$ for any effective sequence. Indeed, let $(A,B)$, $(B,C)\in\Eff(\cl)$ with $A\not=B$, $B\not=C$. Then $A<_\tO B$ and $B<_{\tO} C$ for any exceptional order whence $C-A\in-\Imm(X)$. Moreover, $B-A$, $C-B\in\Eff(X)$ so that $C-A\in\Eff(X)$. Hence $(A,C)\in\Eff(\cl)$.
\end{remark}

\section{Projective bundles over $\P^\lH$}
\label{sec:ProjBund}
We reexamine the setup of Example~\ref{exam:OrlovType} for $Y=\P^\lH$ and consider a vector bundle $\CE\to Y$ of $\rk(\CE) = \lV+1$. This induces the projective bundle
\[
\pi\colon X=\P(\CE)\to\P^\lH,
\]
together with the relative hyperplane section class $H$. We recall our convention 
\[
\P(\CE):=\Proj(\SymE)
\]
so that in particular, $\P(\CE)_y=\P(\CE_y)$ for the fibres over $y\in Y$.\footnote{Grothendieck's convention with $\Sym^\bullet\CE$ instead of $\SymE$ is also frequently used in the literature, but leads to different signs in the subsequent formul\ae.}

\medskip

Let $h$ be the pullback of a hyperplane section class of $\P^\lH$. Then
\[
\Pic(X) \cong \Z\cdot h\,\oplus\,\Z\cdot H;  
\]
in particular, the line bundle corresponding to $ih+jH$ is $\pi^*\CO_{\P^\lH}(i)\otimes\CO(jH)=\CO_X(ih+jH)$ which we simply denote $(i,j)$.
We refer to these coordinates on $\Pic(X)$ as {\em natural}. To lighten notation, we simply write $\CO(i)$ for $\CO_{\P^\lH}$ if there is no risk of confusion.

\begin{remark}
\label{rem:CLB}
For later use, we note that in natural coordinates the canonical sheaf of $\P(\CE)$ is given by 
\[
K_{\P(\CE)}=(-\lH-1-e)h-(\lV+1)H,
\]
where $\CO_{\P^\lH}(e)=\det(\CE)$. This follows from dualising and taking the determinant of the short exact sequences
\[
0\to\CT_{\P(\CE)|X}\to\CT_{\P(\CE)}\to\pi^*\CT_{\P^\lH}\to0.
\]
and
\[
0\to\CO_{\P(\CE)}\to\pi^*\CE(1)\to\CT_{\P(\CE)|X}\to0,
\]
cf.\ \cite[B.5.8]{fulton}.
\end{remark}

\begin{remark}
\label{rem:HyperSection}
Note that the projectivisation $\pi\colon\P(\CE)\to Y$ only determines $\CE$ up to a twist with a line bundle. Twisting $\CE$ with $\CO(D)$ yields the new hyperplane section class $H'=H-\pi^*D$~\cite[B.5.5]{fulton}.
\end{remark}

The well-known cohomology of line bundles on projective spaces and standard results of the cohomology of projective bundles yields the following

\begin{proposition}
\label{prop:CohomPBundleP}
Let $\CE\to\P^\lH$ be a vector bundle of rank $\lV+1$. Then
\[
\pi_*\CO_X(ih+jH)=\CO_{\P^\lH}(i)\otimes\Sym^j\CE^\vee
\]
for $j\geq0$. Furthermore, the line bundles $\CO_X(ih+jH)$ in the range $-\lV\leq j\leq -1$, subsequently referred to as the {\em horizontal strip} $\CH$, are immaculate, that is, $\CO_X(ih+jH)\in\Imm(X)$.
\end{proposition}

See also Figure~\ref{fig:immaculatePBundleP} for a schematic sketch.

\begin{figure}[ht]
\begin{tikzpicture}[scale=0.3]
\draw[color=gray!40] (-8.3,-7.3) grid (5.3,1.3);
\draw[->,color=black] (-8.3,-1) -- (6.3,-1) node[right]{$\scriptstyle h$};
\draw[->,color=black] (0,-7.3) -- (0,2.3) node[above]{$\scriptstyle H$};
\draw[very thick, color=magenta]
    (-8.3,-2) -- (5.3,-2);
\draw[very thick, color=magenta]
    (-8.3,-4) -- (5.3,-4);
\foreach \x in {-4,-3,-2,-1} {
 \fill[thick, color=magenta] (\x,-1) circle (7pt);
 \fill[thick, color=magenta] (\x+2,-5) circle (7pt);
};
\fill[pattern=north west lines, pattern color=magenta!40] (-8.3,-4) rectangle (5.3,-2);
\draw[color=magenta] (-2,-3) node[fill=white]{$\scriptstyle\CH$};
\fill[thick, color=black] (0,-1) circle (7pt) node[above right]{$\scriptstyle \CO$};
\end{tikzpicture}
\caption{Line bundles belonging to the immaculate locus of $\P(\CE^{\lV+1})\to\P^\lH$ are drawn in magenta (here for $\lH=4$ and $\lV=3$).}
\label{fig:immaculatePBundleP}
\end{figure}

\begin{remark}
\label{rem:Orlov}
A {\em horizontal chain} is a set of $\lH+1$ consecutive points $\CO_X(ih+jH)$, $i=i_0,\ldots,i_0+\lH$ on the line $[y=j]$, that is, it is the pull-back to $X$ of the Beilinson sequence $\CO_{\P^\lH}(i_0),\ldots,\CO_{\P^\lH}(i_0+\lH)$ on $\P^\lH$ twisted with $\CO_X(jH)$. The sets underlying exceptional sequences of Orlov-type consist thus of $\lV+1$ consecutive horizontal chains. In the language of~\cite{fesfano}, these are just the {\em trivial} sequences.
\end{remark}

\begin{remark}
\label{rem:EffNef}
$\R_{\geq0} \cdot h$ is an extremal ray of the pseudo-effective cone $\overline{\Eff(X)}$, where $\Eff(X)$ denotes the {\em effective cone} of (classes of) line bundles $\CL$ in $\Pic(X)$ with $H^0(X,\CL)\not=0$. If we assume $H$ to be effective, then the second extremal ray is of the form
\[
H_{\mathrm{eff}}=a_\eff\cdot h+b_\eff\cdot H
\]
with $a_\eff\leq 0$ and $b_\eff>0$. Since $\CO_{\P^\lH}(1)$ is very ample, $h$ is still nef on $X$. So $\R_{\geq0} \cdot h$ is also an extremal ray of the nef cone, and $\Nef(X)=\langle h,H_{\mathrm{nef}}\rangle\subseteq\Eff(X)$ for
\[
H_{\mathrm{nef}} = a_\nef\cdot h + b_\nef\cdot H 
\]
with $a_{\mathrm{eff}}/b_{\mathrm{eff}}\leq a_\nef/b_\nef$.
\end{remark}

In the following, we will refer to the coordinates on the Picard group induced by the primitive generators of the nef cone $h$ and $H_\nef$ as {\em nef coordinates}.

\medskip

The subsequent Sections~\ref{sec:toric} and~\ref{sec:CoTang} will consider two concrete instances of this setup, namely for a direct sum of line bundles and the cotangent bundle of $\P^\lH$.

\section{The projective bundle of a direct sum of line bundles}
\label{sec:toric}
The simplest case of a vector bundle $\CE\to\P^\lH$ is given by the direct sum of line bundles
\[
\CE=\CO_{\P^{\lH}}(c^0)\oplus\cdots\oplus\CO_{\P^{\lH}}(c^\lV)
\]
for integers $c^0\geq\cdots\geq c^\lV$. Since we are free to twist $\CE$ with a further line bundle we assume without loss of generality that $c^0=0$. We write $c=(c^1,\ldots,c^\lV)$ and 
\[
X=X(\lH,\lV;c):=\P(\CO_{\P^{\lH}}\oplus\CO_{\P^{\lH}}(c^1)\oplus\cdots\oplus\CO_{\P^{\lH}}(c^{\lV+1})).  
\]
In the so-called {\em product case} where $c=0$ we simply obtain $X=\P(\CO_{\P^\lH}^{\lV+1})=\P^\lV\times\P^\lH$. Otherwise, we speak of the {\em twisted case} which still yields a toric variety. In fact, Kleinschmidt's result~\cite{kle88} asserts that any complete, smooth toric variety of Picard rank $2$ arises this way. We therefore embrace the product and the twisted case by referring to the {\em toric case}.

\subsection{The immaculate locus}
The integer quantities 
\[
\xa:=-c^\lV\hspace{0.8em}\mbox{and}\hspace{0.8em}\xb:=-\sum_{i=1}^\lV c^i
\] 
associated with $X(\lH,\lV;c)$ are particularly important~\cite{fesfano}. Almost by definition we have the {\em effective inequality}
\begin{equation}
\label{eq:EffIneq}
\xb\leq\xa\lV.
\end{equation}

\medskip

In terms of the geometry of $X$ we find $\xb=-c_1(\mc E)$. The interpretation of $\xa$ is slightly more subtle. First,
\[
\pi_*\CO_X(-\xa h+H)=\CE^\vee(-\xa) = \CO_{\P^{\lH}}(-\xa)\oplus\cdots\oplus\CO_{\P^{\lH}}(-\xa-c^{\lV-1})\oplus\CO_{\P^{\lH}}
\]
so that the divisor $-\xa h+H$ is effective. More generally, for $ih+jH$ with $j>0$ we obtain
\begin{equation}
\label{eq:pushforwardSplit}
\pi_*\CO_X(ih+jH)=\CO_{\P^{\lH}}(i)\otimes\Sym^j\CE^\vee=\bigoplus_{1\leq k_1,\ldots,k_j\leq\lV}\CO_{\P^{\lH}}(-c^{k_1}-\cdots-c^{k_j}+i).
\end{equation}
If $i<0$ with $i+\alpha j<0$, then $\alpha j\leq-c^{k_1}-\cdots-c^{k_j}$ for all $1\leq k_1,\ldots,k_j\leq\lV$, that is, $ih+jH$ is not effective. 
In particular, $-\xa h+H$ is an extremal ray of the pseudo-effective 
cone of $X$, or in the notation of Remark~\ref{rem:EffNef}~(ii), $a_\eff=-\xa$ and $b_\eff=1$. Since $X$ is toric, the pseudo-effective cone equals the effective cone whence
\[
\Eff(X)=\langle h,\,-\xa h+H\rangle.
\]

\medskip

For the nef cone we find
\begin{equation}
\label{eq:NefCone}
\Nef(X)=\langle h,\,H\rangle.
\end{equation}
Indeed, assume to the contrary that $a'h+b'H$ with $a'<0$ and $b'>0$ is an 
extremal ray of the nef cone. Taking a suitable positive multiple of this divisor if necessary we may assume that $a'<-\lH$ and $(a'+1)h+b'H$ is very ample. From~\eqref{eq:pushforwardSplit} it follows that $\CO_{\P^{\lH}}(a)$ is a direct summand of $\pi_* \CO_X(ah+bH)$ by choosing all $i_j=0$. Since $a<-\lH$, $\CO_X(ah)$ has non-trivial $\lH$-th cohomology and so has $\CO((a+1)h+bH)$, contradicting the very ampleness. Consequently, $H$ is an extremal ray of the nef cone. In particular, the natural coordinates on $\Pic(X)$ are nef, cf.\ Remark~\ref{rem:EffNef}.

\medskip

Next we describe the immaculate locus which is the complement in $\Pic(X)$ of the (not necessarily disjoint) union of the sets
\[
\CH^k(X):=\{\CL\mid H^k(X,\CL)\not=0\},\hspace{0.8em}k=0,\ldots,\lH+\lV.
\]
In~\cite[$(5.1)$]{immaculate} it was shown that that the these sets are either empty or the strongly convex cones given by
\begin{equation}
\label{eq:MacLocusTC}
\renewcommand{\arraystretch}{1.3}
\begin{array}{lcl}
\CH^0(X)&=&\langle(1,0),(-\xa,1)\rangle
\subseteq \Z\times\Z_{\geq 0}
\\
\CH^\lH(X)&=&(-\lH-1,0)+ \langle (-1,0),\;(0,1)\rangle
\subseteq\Z\times\Z_{\geq 0}
\\
\CH^{\lV}(X)&=&(\xb,-\lV-1)+\langle (1,0),\;(0,-1)\rangle
\subseteq \Z\times\Z_{< 0}\\
\CH^{\lH+\lV}(X)&=&(\xb-\lH-1,-\lV-1)+ \langle (-1,0),\;(\xa,-1)\rangle\subseteq \Z\times\Z_{<0}.
\end{array}
\end{equation}
The immaculate locus $\Imm(X)$ is thus completely determined by the quantities $\xa$ and $\xb$. The canonical bundle is given by $K_X=(\xb-\lH-1,-\lV-1)$. Figure~\ref{fig:ImmLoc} illustrates the typical shape for the product and twisted case.

\begin{figure}[ht]
\newcommand{\spaceA}{\hspace{10pt}}
\newcommand{\scaleA}{0.3}
\begin{tikzpicture}[scale=\scaleA]
\draw[color=gray!40] (-8.3,-11.3) grid (3.3,5.3);
\draw[->,color=black] (0,-1) -- (4.3,-1) node[right]{$\scriptstyle h$};
\draw[->,color=black] (0,-1) -- (0,6.3) node[above]{$\scriptstyle H$};
\draw[thick, color=magenta]
    (-8.3,-2) -- (-4,-2) (-1,-2) -- (3.3,-2);
\draw[thick, color=magenta]
    (-8.3,-4) -- (-4,-4) (-1,-4) -- (3.3,-4);
\draw[thick, color=magenta]
    (-4,5.3) -- (-4,-2) (-4,-4) -- (-4,-11.3);
\draw[thick, color=magenta]
    (-1,5.3) -- (-1,-2) (-1,-4) -- (-1,-11.3);
\fill[pattern=north west lines, pattern color=magenta!40] (-8.3,-4) rectangle (3.3,-2);
\fill[pattern=north west lines, pattern color=magenta!40] (-4,5.3) rectangle (-1,-11.3);
\fill[pattern=north east lines, pattern color=yellow!40] (0,-1) rectangle (3.3,5.3);
\fill[pattern=north west lines, pattern color=oliwkowy!60] (0,-1) rectangle (3.3,5.3);
\draw[thick, color=oliwkowy!60] (0,-1) -- (3.3,-1) (0,-1) -- (0,5.3);
\fill[thick, color=black] (0,-1) circle (7pt) node[below]{$\scriptstyle \CO_X$};
\draw (3,0.1) node[fill=white] {$\scriptstyle\CH^0\neq0$};
\draw (5,2) node[fill=white][above] {$\scriptstyle \Eff(X)=\Nef(X)$};
\fill[pattern=north east lines, pattern color=oliwkowy!60] (-8.3,-1) -- (-5,-1) -- (-5,5.3) -- (-8.3,5.3);
\draw[thick, color=oliwkowy!60] (-8.3,-1) -- (-5,-1) -- (-5,5.3);
\draw (-7,0.1) node[fill=white] {$\scriptstyle \CH^\lH\neq0$};
\fill[pattern=north west lines, pattern color=oliwkowy!60] (0,-5) rectangle (3.3,-11.3);
\draw[very thick, color=oliwkowy!60] (0,-5) -- (3.3,-5) (0,-5) -- (0,-11.3);
\draw (3,-6) node[fill=white] {$\scriptstyle \CH^\lV\neq0$};
\fill[pattern=north east lines, pattern color=oliwkowy!60] (-8.3,-5) rectangle (-5,-11.3);
\draw[thick, color=oliwkowy!60] (-8.3,-5) -- (-5,-5) -- (-5,-11.3);
\fill[thick, color=black] (-5,-5) circle (7pt) node[above right]{$\scriptstyle K_X$};
\draw (-7.5,-6.1) node[fill=white] {$\scriptstyle\CH^{\lH+\lV}\neq0$};
\end{tikzpicture}
\spaceA
\begin{tikzpicture}[scale=\scaleA]
\draw[color=gray!40] (-8.3,-11.3) grid (4.3,5.3);
\draw[->,color=black] (0,-1) -- (5.3,-1) node[right]{$\scriptstyle h$};
\draw[->,color=black] (0,-1) -- (0,6.3) node[above]{$\scriptstyle H$};
\draw[thick, color=magenta]
    (-8.3,-2) -- (4.3,-2);
\draw[thick, color=magenta]
    (-8.3,-4) -- (4.3,-4);
\foreach \y in {0,...,3} {
    \foreach \x in {-3,...,-\y} {
        \fill[thick, color=magenta] (\x-1,\y-1) circle (7pt);
        \fill[thick, color=magenta] (-\x-2,-\y-5) circle (7pt);
    }
}
\fill[pattern=north west lines, pattern color=magenta!40] (-8.3,-4) rectangle (4.3,-2);
\fill[pattern=north east lines, pattern color=yellow!40] (4.3,-1) -- (0,-1)-- (0,5.3) -- (4.3,5.3);
\fill[pattern=north west lines, pattern color=oliwkowy!60] (0,-1)-- (-6.3,5.3) -- (4.3,5.3) -- (4.3,-1);
\draw[thick, color=oliwkowy!60] (0,-1) -- (4.3,-1) (0,-1) -- (-6.3,5.3);
\fill[thick, color=black] (0,-1) circle (7pt) node[below]{$\scriptstyle\CO_X$};
\draw (5.6,-0.8) node[fill=white][above] {$\scriptstyle\CH^0\neq0:\;\Eff(X)$};
\draw (4.2,3) node[fill=white][above] {$\scriptstyle\Nef(X)$};
\fill[pattern=north east lines, pattern color=oliwkowy!60] (-8.3,-1) -- (-5,-1) -- (-5,5.3) -- (-8.3,5.3);
\draw[thick, color=oliwkowy!60] (-8.3,-1) -- (-5,-1) (-5,5.3) -- (-5,-1);
\draw (-7,-0.8) node[fill=white][above] {$\scriptstyle\CH^\lH\neq0$};
\fill[pattern=north west lines, pattern color=oliwkowy!60] (2,-5) -- (4.3,-5) -- (4.3,-11.3) -- (2,-11.3);
\draw[thick, color=oliwkowy!60] (2,-5) -- (4.3,-5) (2,-5) -- (2,-11.3);
\draw (4,-5.2) node[fill=white][below] {$\scriptstyle\CH^\lV\neq0$};
\fill[pattern=north east lines, pattern color=oliwkowy!60] (-8.3,-5) -- (-3,-5) -- (3.3,-11.3) -- (-8.3,-11.3);
\draw[thick, color=oliwkowy!60] (-8.3,-5) -- (-3,-5) -- (3.3,-11.3);
\fill[thick, color=black] (-3,-5) circle (7pt) node[above]{$\scriptstyle K_X$};
\draw (-6.5,-5.2) node[fill=white][below] {$\scriptstyle\CH^{\lH+\lV} \neq0$};
\end{tikzpicture}
\caption{The immaculate locus for $X(\lH,\lV;c)$ with $\lH=4$ and $\lV=3$. On the left hand side the product case with $c=0$, on the right hand side the twisted case with $\xa=1$ and $\xb=2$. The nef cone is shaded in yellow.}
\label{fig:ImmLoc}
\end{figure}

\subsection{Maximal exceptional sequences in the toric case}
In view of establishing Theorem~\thmStrongExSec\ in the toric case, we recall the toric version of Theorem~\thmExSec\ from~\cite[Theorems 7.7 and 10.2]{fesfano}. The following arguments use nef coordinates.

\medskip

Let $\sigma$ be the $\Pic(X)$-involution given by
\begin{equation}
\label{eq:sigmaInv}
\sigma:\Z^2\to\Z^2,\quad(i,j)\mapsto(j,i).
\end{equation}
For an exceptional set $\cl$ we may assume without loss of generality that it fits into a rectangle of at most $2(\lH+1)$ rows. In the product case, this holds possibly after substituting $\cl$ by $\sigma(\cl)$, cf.~\cite[Theorems 6.1 and 9.8]{fesfano}. 

\medskip

Then up to normalisation, that is, up to twisting $\cl$ with a suitable line bundle, $\cl$ is one of the following sets. First, consider an {\em admissible set}
\[
\cX\subseteq\DeltaUp:=\{(i,j)\in\Z^2\mid -\xb<i,\;\lV<j,\; 
\mbox{ and }i+\xa j\leq\lH+\xa(\lV+1)-\xb\}
\]
determined by the properties below.
\begin{enumerate}
\item[{\rm (Ai)}] Vertical spatial boundedness: The horizontal layers 
$$
\cX(k):=\cX\cap[y=k]
$$ 
are empty for $k\not=\lV+1,\ldots,2\lV+1$. 
\item[{\rm (Aii)}] Normalisation: If $X\not=\varnothing$, then $(\lH-\xb,\lV+1)\in\cX$.
\item[{\rm (Aiii)}] Ascending slimness: For each $k\geq\lV+1$ and $(x,k+1)\in \cX(k+1)$ 
the points $(x,k),\,(x+1,k),\,\ldots,\,(x+\xa,k)$
belong to $\cX(k)$, that is, ascending layers of $\cX$ become smaller. In particular, $\cX(k)=\varnothing$ implies $\cX(k+1)=\varnothing$.
\end{enumerate}

\medskip

Second, we extend the layers $\cX(k)\not=\varnothing$ successively to the left until we obtain a horizontal chain in $[y=k]$ (cf.\ Remark~\ref{rem:Orlov}). Whenever $\cX(k)=\varnothing$, we freely choose a horizontal chain. Denote by $\cY(k)$ the new points in the layer $[y=k]$. Since $\cX(k)\subseteq\DeltaUp$, $\cY(k)$ is necessarily nonempty. We let $\cY:=\bigcup_k\cY(k)$ and define
\[
\cZ:=\cY+(\xb,-\lV-1).
\]
Then $\cZ\cup\cX$ defines a \meset\ on $X$, and any \meset\ arises this way. See also Figure~\ref{fig:MesetTV} which visualises this construction; elements of $\cX$, $\cY$, and $\cZ$ correspond to red, blue and green points, respectively.

\begin{figure}[ht]
\newcommand{\spaceA}{\hspace{10pt}}
\newcommand{\scaleA}{0.3}
\begin{tikzpicture}[scale=\scaleA]
\draw[color=gray!40] (-2.3,-1.3) grid (8.3,8.3);
\draw[very thick, color=pink]
    (0.5,8.5) -- (0.5,3.50) -- (4.5,3.50) -- (4.5,8.5)
    (2.7,8.1) node{$\scriptstyle\DeltaUp$};
\fill[thick, color=colorX]
    (2,4) circle (7pt) (3,4) circle (7pt)
    (4,4) circle (7pt) (2,5) circle (7pt)
    (3,5) circle (7pt) (4,5) circle (7pt)
    (3,6) circle (7pt);
\draw[thick, color=black]
    (0,0) circle (10pt) node[below left]{$\scriptstyle\CO_X$};
\draw[thick, color=black]
    (4,4) circle (10pt) node[right]{$\hspace{4pt}\scriptstyle(\lH,\lV+1)$};
\draw[thick, color=colorY]
    (1,4) circle (7pt) (0,5) circle (7pt)
    (1,5) circle (7pt) (-1,6) circle (7pt)
    (0,6) circle (7pt) (1,6) circle (7pt)
    (2,6) circle (7pt) (6,7) circle (7pt)
    (7,7) circle (7pt) (3,7) circle (7pt)
    (4,7) circle (7pt) (5,7) circle (7pt)
    (0,4) circle (7pt);
\fill[yshift=-4cm, thick, color=colorZ]
    (1,4) circle (7pt) (0,5) circle (7pt)
    (1,5) circle (7pt) (-1,6) circle (7pt)
    (0,6) circle (7pt) (1,6) circle (7pt)
    (2,6) circle (7pt) (6,7) circle (7pt)
    (7,7) circle (7pt) (3,7) circle (7pt)
    (4,7) circle (7pt) (5,7) circle (7pt)
    (0,4) circle (7pt);
\end{tikzpicture}
\spaceA
\begin{tikzpicture}[scale=\scaleA]
\draw[color=gray!40] (-5.3,-1.3) grid (8.3,8.3);
\draw[very thick, color=pink]
    (-1.5,8.0) -- (-1.5,3.43) -- (3,3.43) -- (-1.5,8)
    (0.2,8.1) node{$\scriptstyle\DeltaUp$};
\fill[thick, color=colorX]
    (0,4) circle (7pt) (1,4) circle (7pt)
    (2,4) circle (7pt) (0,5) circle (7pt)
    (0,6) circle (7pt) (1,5) circle (7pt);
\draw[thick, color=black]
    (0,0) circle (10pt) node[below left]{$\scriptstyle\CO_X$};
\draw[thick, color=black]
    (2,4) circle (10pt) node[right]{$\hspace{4pt}\scriptstyle(\lH-\xb,\lV+1)$};
\draw[thick, color=colorY]
    (-1,4) circle (7pt) (-2,4) circle (7pt)
    (-3,5) circle (7pt) (3,7) circle (7pt)
    (-2,5) circle (7pt) (-1,5) circle (7pt)
    (-4,6) circle (7pt) (-3,6) circle (7pt)
    (-2,6) circle (7pt) (-1,6) circle (7pt) 
    (-1,7) circle (7pt) (0,7) circle (7pt) 
    (1,7) circle (7pt) (2,7) circle (7pt);
\fill[thick, color=colorZ]
    (0,0) circle (7pt) (1,0) circle (7pt) 
    (-1,1) circle (7pt) (5,3) circle (7pt)
    (0,1) circle (7pt) (1,1) circle (7pt)
    (-2,2) circle (7pt) (-1,2) circle (7pt) 
    (0,2) circle (7pt) (1,2) circle (7pt)
    (1,3) circle (7pt) (2,3) circle (7pt) 
    (3,3) circle (7pt) (4,3) circle (7pt);
\end{tikzpicture}
\caption{The sets $\cX$, $\cY$ and $\cZ$ for \meset s $\cl$ on $X(4,3;0)$ (left-hand side) and $X(4,3;c)$ with $\xa=1$ and $\xb=2$ (right-hand side).}
\label{fig:MesetTV}
\end{figure}

\begin{remark}
An exceptional order on $\cZ\cup\cX$ is given by the {\em vertical lexicographical order} $<_{\mathrm{vert}}$ defined by
\begin{center}
$(i_1,j_1)<_{\mathrm{vert}}(i_2,j_2)$ if and only if 
$j_1<j_2$ or $\big(j_1=j_2$ and $i_1<i_2\big)$.
\end{center}
Similarly, we define the horizontal lexicographical order. In particular, any \meset\ can be vertically ordered (twisted case) or vertically / horizontally ordered (product case). By Proposition~\ref{prop:pFpO}, the maximal \effec\ sequences are precisely those \meset s $\cl$ which can be ordered both vertically and horizontally. In fact, $\pO(\cl)$ is contained in any global term order used in computer algebra.

\medskip

As a remarkable matter of fact, this feature is proper to \meset s and Picard rank $2$. There are counterexamples for non-maximal non-extendable exceptional sets~\cite[Example 8.1]{fesfano} and for \meset s on $\P^1\times\P^1\times\P^1$~\cite[Example 3.5]{p1p1p1}.
\end{remark}

Summarising, an exceptional set is completely determined by $\cZ$. For practical reasons we let
\[
\free:=\{k\in\Z\mid0\leq k\leq\lV\;\mbox{and }\cX(k+\lV+1)=\varnothing\}
\subseteq\{0,\ldots,\lV\},
\]
and
\[
\Zfree:=\bigcup_{k\in\free}\cl(k)\quad\mbox{and}\quad\Zbound:=\cZ\setminus\bigcup_{k\in\free}\cl(k)
\]
be the {\em free} and the {\em residual part} of $\cZ$, respectively. The reason for the subscript $-\infty$ will become clear in Subsection~\eqref{subsec:StrongExSecTV}.

\begin{remark}
\cite[Proposition 5.3]{fesfano} implies that a \meset\ $\cl$ on $X(\lH,\lV;c)$ which can be vertically ordered satisfies
\begin{equation}
j_2-j_1\geq-\lV\hspace{0.8em}\mbox{for all }(i_1,j_1)<(i_2,j_2)\mbox{ in }\cl.
\end{equation}
Therefore, if two consecutive line bundles $(\CL_0,\CL_1)$ are not vertically ordered, mutating to the left yields $(\leftMut_{\CL_0}\CL_1,\CL_0)=(\CL_1,\CL_0)$, cf.\ Remark~\ref{rem:Mut} and Figure~\ref{fig:CF}. In particular, we can achieve vertical lexicographical order by successively mutating line bundles to the left. This gives rise to a fullness preserving operator 
\[
\mr{Lex}:\{\mbox{\mes}\}\to\{\mbox{\mes}\}
\]
which sends a given \mes\ to a vertically ordered \mes. Combining this operator with helixing to the right yields the {\em helex} (!) {\em operator} $\helex:=\helixR\circ\mr{Lex}$ which also preserves fullness. Theorem~\thmExSec\ then implies that up to applying $\helex$ a finite number of times, any \meset\ can be transformed into a \mes\ of Orlov type, cf.\ also~\cite[Proposition 7.2]{fesfano}. Theorem~\thmFull\ follows immediately. 
\end{remark}

\subsection{Strongly exceptional maximal sets}
\label{subsec:StrongExSecTV}
Knowing the shape of the \meset s we can in principle determine the generating sets $\pF$ of the associated partial order. At any rate, we derive a direct criterion for strongness.

\medskip

Let
\begin{align*}
\mc F&:=-\Imm(X)\setminus\big(-\Imm(X)\cap\Imm(X)\big)\\
&\phantom{:}=-\Imm(X)\setminus\{(i,j)\in-\Imm(X)\mid0<|i|\leq\lH,0<|j|\leq\lV\},  
\end{align*}
cf.\ also Figure~\ref{fig:CF}. By Lemma~\ref{lem:deltaF},
\begin{equation}
\label{eq:CompF}
\pF(\cl)=\bigcup_{\mc L\in\cl}\{\mc L\}\times\big(\cl\cap(\mc L+\mc F)\big). 
\end{equation}
For strong exceptionality we need $\pF(\cl)\subseteq\nAcyc(X)$, the acyclic locus of $X$, cf.\ Proposition~\ref{prop:StrongExcep}. From Figure~\ref{fig:StrongExSecPC} (product case) and Figure~\ref{fig:BadDisplaced} (twisted case) we immediately gather that any pair $(\CL,\CL')\in\pF(\cl)\setminus\Eff(\cl)$ must involve the free part $\Zfree(\cl)$: If the horizontal chains are torn too far apart we obtain pairs $(\CL,\CL')$ in $\pF$ with $\CL'-\CL\not\in\nAcyc(X)$.

\begin{figure}
\newcommand{\spaceA}{\hspace{10pt}}
\newcommand{\scaleA}{0.3}
\begin{tikzpicture}[scale=\scaleA]
\draw[color=gray!40] (-6.3,-5.3) grid (8.3,7.3);
\fill[pattern=north west lines, pattern color=oliwkowy!60] (0,0) rectangle (8.3,7.3);
\draw[thick, color=oliwkowy!60] (0,7.3) -- (0,3) (0,1) -- (0,0) -- (1,0) (4,0) -- (8.3,0);
\draw (7,5) node[fill=white] {$\scriptstyle\Eff(X)$};
\fill[thick, color=black] (0,0) circle (7pt) node[below left]{$\scriptstyle\CO_X$};
\draw[thick, color=magenta]
    (-6.3,3) -- (-5,3)   (0,3) -- (1,3) (4,3) -- (8.3,3)
    (-6.3,1) -- (-5,1)   (0,1) -- (1,1) (4,1) -- (8.3,1)
    (-5,3) -- (-5,1) (0,3) -- (0,1);
\fill[pattern=north west lines, pattern color=magenta!40] 
    (-6.3,1) rectangle (-5,3) (0,1) rectangle (8.3,3);
\draw[thick, color=magenta]
    (1,-5.3) -- (1,-4)   (1,0) -- (1,1) (1,3) -- (1,7.3)
    (4,-5.3) -- (4,-4)   (4,0) -- (4,1) (4,3) -- (4,7.3)
    (1,0) -- (4,0) (1,-4) -- (4,-4);
\fill[pattern=north west lines, pattern color=magenta!40] 
    (1,0) rectangle (4,7.3) (1,-4) rectangle (4,-5.3);
\foreach \x in {-1,...,-4} 
    {
    \foreach \y in {1,2,3}
        {
        \draw[thick, color=black] (\x,\y) circle (7pt);
        }
    }
\foreach \x in {1,...,4} 
    {
    \foreach \y in {1,2,3}
        {
        \draw[thick, color=black] (\x,-4+\y) circle (7pt);
        }
    }
\end{tikzpicture}
\spaceA
\begin{tikzpicture}[scale=\scaleA]
\draw[color=gray!40] (-6.3,-5.3) grid (8.3,7.3);
\fill[pattern=north west lines, pattern color=oliwkowy!60] (0,0) -- (-6.3,6.3) -- (-6.3,7.3) -- (8.3,7.3) -- (8.3,0);
\draw[thick, color=oliwkowy!60] (-6.3,6.3) -- (-3,3) (-1,1) -- (0,0) -- (8.3,0);
\draw (7,5) node[fill=white] {$\scriptstyle\Eff(X)$};
\draw[thick, color=magenta]
    (-6.3,3) -- (-5,3)  (-3,3) -- (8.3,3)
    (-6.3,1) -- (-5,1)  (-1,1) -- (8.3,1);
\fill[pattern=north west lines, pattern color=magenta!40] 
    (-6.3,1) rectangle (-5,3) (-1,1) -- (-3,3) -- (8.3,3) -- (8.3,1) -- (-1,1);
\draw[thick, color=magenta]
    (-3,3) -- (-1,1)  (-5,3) -- (-5,1); 
\fill[color=magenta] (1,0) circle (7pt) (2,0) circle (7pt) (3,0) circle (7pt) (4,0) circle (7pt);
\fill[color=magenta] (-1,4) circle (7pt) (0,4) circle (7pt) (1,4) circle (7pt) (2,4) circle (7pt) (-1,5) circle (7pt) (0,5) circle (7pt) (1,5) circle (7pt) (-1,6) circle (7pt) (0,6) circle (7pt) (-1,7) circle (7pt);
\draw[thick, color=black] 
    (-2,1) circle (7pt) (-3,1) circle (7pt) (-4,1) circle (7pt)
    (-3,2) circle (7pt) (-4,2) circle (7pt) (-4,3) circle (7pt);
\draw[thick, color=black] 
    (2,-1) circle (7pt) (3,-1) circle (7pt) (4,-1) circle (7pt)
    (3,-2) circle (7pt) (4,-2) circle (7pt) (4,-3) circle (7pt);
\fill[thick, color=black] (0,0) circle (7pt) node[below left]{$\scriptstyle\CO_X$};
\end{tikzpicture}
\caption{The magenta shaded area together with the boundary forms $\CF$ (here for $\lH=4$ and $\lV=3$ and $\xa=1$ and $\xb=2$). The black circles represent the intersection $-\Imm(X)\cap\Imm(X)$.
}
\label{fig:CF}
\end{figure}

\begin{figure}[ht]
\newcommand{\spaceA}{\hspace{10pt}}
\newcommand{\scaleA}{0.3}
\begin{tikzpicture}[scale=\scaleA]
\draw[color=gray!40] (-3.3,-5.3) grid (8.3,6.3);
\draw[very thick, color=pink]
    (0.5,6.5) -- (0.5,1.5) -- (4.5,1.5) -- (4.5,6.5)
    (2.7,5.1) node{$\scriptstyle\DeltaUp$};
\fill[yshift=-2cm, thick, color=colorX]
    (2,4) circle (7pt) (3,4) circle (7pt)
    (4,4) circle (7pt) (2,5) circle (7pt)
    (3,5) circle (7pt) (4,5) circle (7pt)
    (3,6) circle (7pt);
\draw[yshift=-2cm, thick, color=colorX]
    (2,4) circle (10pt) 
    (6,4) circle (10pt);
\draw[thick, color=black]
    (-5,2) node{$\scriptstyle\cl(\infty)^L$}
    (-5,1) node{$\scriptstyle\cl(5)^L$}
    (-5,0) node{$\scriptstyle\cl(4)^L$}
    (-5,-1) node{$\scriptstyle\cl(3)^L$}
    (-5,-2) node{$\scriptstyle\cl(-\infty)^L$};
\draw[thick, color=colorZ]
    (-2,-2) circle (10pt) (2,-2) circle (10pt)
    (0,-1) circle (10pt) (4,-1) circle (10pt)
    (1,0) circle (10pt) (5,0) circle (10pt)
    (-2,1) circle (10pt) (2,1) circle (10pt)
    ;
\fill[yshift=-4cm, thick, color=colorZ]
    (1,0) circle (7pt) (0,1) circle (7pt)
    (1,1) circle (7pt) (0,2) circle (7pt) 
    (1,2) circle (7pt) (0,0) circle (7pt)
    (2,2) circle (7pt) (0,3) circle (7pt)
    (1,3) circle (7pt) (2,3) circle (7pt)
    (3,3) circle (7pt) (4,3) circle (7pt)
    (1,4) circle (7pt) (2,4) circle (7pt) 
    (3,4) circle (7pt) (4,4) circle (7pt) 
    (5,4) circle (7pt) (0,5) circle (7pt) 
    (1,5) circle (7pt) (2,5) circle (7pt) 
    (-1,5) circle (7pt) (-2,5) circle (7pt);
\draw[thick, color=black]
    (0,-4) circle (10pt) node[below]{$\scriptstyle\CO_X$};
\end{tikzpicture}
\spaceA
\begin{tikzpicture}[scale=\scaleA]
\draw[color=gray!40] (-3.3,-5.3) grid (8.3,6.3);
\draw[very thick, color=pink]
    (0.5,6.5) -- (0.5,1.5) -- (4.5,1.5) -- (4.5,6.5)
    (2.7,5.1) node{$\scriptstyle\DeltaUp$};
\fill[yshift=-2cm, thick, color=colorX]
    (2,4) circle (7pt) (3,4) circle (7pt)
    (4,4) circle (7pt) (2,5) circle (7pt)
    (3,5) circle (7pt) (4,5) circle (7pt)
    (3,6) circle (7pt);
\draw[yshift=-2cm, thick, color=colorX]
    (2,4) circle (10pt) 
    (6,4) circle (10pt);
\draw[thick, color=black]
    (10,2) node{$\scriptstyle\cl(\infty)^R$}
    (10,1) node{$\scriptstyle\cl(5)^R$}
    (10,0) node{$\scriptstyle\cl(4)^R$}
    (10,-1) node{$\scriptstyle\cl(3)^R$}
    (10,-2) node{$\scriptstyle\cl(-\infty)^R$};
\draw[yshift=-4cm, thick, color=colorZ]
    (-2,2) circle (10pt) (2,2) circle (10pt)
    (-1,3) circle (10pt) (3,3) circle (10pt)
    (1,4) circle (10pt) (5,4) circle (10pt)
    (1,5) circle (10pt) (5,5) circle (10pt);
\fill[yshift=-4cm, thick, color=colorZ]
    (1,0) circle (7pt) (0,1) circle (7pt)
    (1,1) circle (7pt) (-1,2) circle (7pt)
    (0,2) circle (7pt) (1,2) circle (7pt)
    (2,2) circle (7pt) (0,3) circle (7pt)
    (1,3) circle (7pt) (2,3) circle (7pt)
    (3,3) circle (7pt) (-1,3) circle (7pt)
    (1,4) circle (7pt) (2,4) circle (7pt) 
    (3,4) circle (7pt) (4,4) circle (7pt) 
    (5,4) circle (7pt) (5,5) circle (7pt) 
    (1,5) circle (7pt) (2,5) circle (7pt) 
    (3,5) circle (7pt) (4,5) circle (7pt)
    (0,0) circle (7pt);
\draw[thick, color=black]
    (0,-4) circle (10pt) node[below]{$\scriptstyle\CO_X$};
\end{tikzpicture} 
\caption{Two \meset s on $X=X(4,7;0)$. On the left-hand side, the layer $\cl(5)$ is displaced: $(\cl(4)^R,\cl(5)^L)\in\pF(\cl)$, but $\cl(5)^L-\cl(4)^R\not\in\nAcyc(X)$. 
The \meset\ on the right-hand side is strongly exceptional.}
\label{fig:StrongExSecPC}
\end{figure}

\medskip

To formulate this more neatly, we introduce the ``thick layers''
\[
\textstyle
\cl(-\infty):=\Zbound
\hspace{0.7em}\mbox{and}\hspace{0.7em}
\cl(\infty):=\cX
\]
and extend $\free$ to
\[
\layers:=\free\cup\{-\infty,\infty\}\subseteq\{-\infty,0,\ldots,\lV,\infty\}.
\]
Since any exceptional order on $\cl$ restricts to the natural total order of the integers on $\cl(k)\subseteq[y=k]\cong\Z$ (cf.~\cite[Lemma 5.1]{fesfano}), we can define the points
\[
\cl(k)^L:=\min\cl(k)
\hspace{0.9em}\mbox{and}\hspace{0.9em}
\cl(k)^R:=\max\cl(k),\quad k=0,\ldots,\lV.
\]
Provided that $\Zbound$ (and thus $\cX$) is not empty, we also set
\begin{equation}
\label{eq:XLZR}
\cl(\infty)^L:=\min\cl(\lV+1)
\hspace{0.9em}\mbox{and}\hspace{0.9em}
\cl(-\infty)^R:=\max\cl(k_0)
\end{equation}
where $k_0$ is the maximal integer such that $\Zbound\cap[y=k_0]\not=\varnothing$. Put differently, $\cl(\infty)^L\in\cX$ is the lower left corner of $\cX$ while $\cl(-\infty)^R$ is the upper right corner of $\Zbound$. In particular, vertical slimness implies
\begin{equation}
\label{eq:ZRandXL}
A\leq\cl(-\infty)^R\mbox{ and }\cl(\infty)^L\leq B\hspace{0.8em}\mbox{for all }A\in\Zbound,\,B\in\cX.
\end{equation}
Finally, we let
\[
\cl(\infty)^R:=\cl(\infty)^L+(\lH,0)\quad\mbox{and}\quad\cl(-\infty)^L:=\cl(-\infty)^R-(\lH,0)
\]
so that
\[
\cl(k)^R - \cl(k)^L = (\lH,0)
\]
for all $k\in\layers$. Note, however, that $\cl(\infty)^R\notin\cX$ and $\cl(-\infty)^L\notin\Zbound$. 

\medskip

We call a layer $\cl(k)$, $k\in\layers$, {\em displaced} if there exists $k'\in\layers$ with 
\[
k'<k\hspace{0.8em}\mbox{and}\hspace{0.8em}\cl(k')^L\not\leq_{\mathrm{nef}}\cl(k)^L,
\]
cf.~again Figure~\ref{fig:StrongExSecPC} for illustration.

\medskip

Then the discussion at the beginning of Subsection~\eqref{subsec:StrongExSecTV} implies

\begin{theorem}[Theorem~\thmStrongExSec\ for the toric case]
\label{thm:pO}
Let $X=X(\lH,\lV;c)$. A \meset\ $\cl$ is strongly exceptional if and only if there are no displaced layers. Equivalently, $\leq_\nef$ induces a total order on the set $\{\cl(k)^L\mid k\in\layers\}$. 
\end{theorem}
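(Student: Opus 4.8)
The plan is to run the criterion of Proposition~\ref{prop:StrongExcep}: the set $\cl$ is strongly exceptional if and only if $\delta_\cl(\pF(\cl))\subseteq\nAcyc(X)$, that is, $B-A\in\nAcyc(X)$ for every pair $(A,B)\in\pF(\cl)$ (recall $\delta_\cl(A,B)=B-A$). The decisive first move is to cut this down to a single cohomology cone. Every \meset\ can be ordered vertically, so by \cite[Proposition~5.3]{fesfano} any such difference has $H$-coordinate at least $-\lV$; since by \eqref{eq:MacLocusTC} the cones $\CH^\lV(X)$ and $\CH^{\lH+\lV}(X)$ are supported in $H$-coordinate $\leq-\lV-1$, they can never be met. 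Hence
\[
\cl\text{ is strongly exceptional}\iff\text{no }(A,B)\in\pF(\cl)\text{ has }B-A\in\CH^\lH(X)=\{(i,j)\mid i\leq-\lH-1,\ j\geq0\},
\]
and the whole problem becomes one-dimensional in flavour, governed only by the $h$-coordinate and the wall $i=-\lH-1$. Notably this wall is the same in the product and the twisted case, so the shear of $\CH^0(X)$ and $\CH^{\lH+\lV}(X)$ never enters.

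Next I would record the layer geometry. Ascending slimness~(Aiii) makes $\free$ an up-set in $\{0,\dots,\lV\}$, so the thick layers indexed by $\layers$ occupy pairwise disjoint, consecutive $H$-intervals in the order of $\layers$; in particular the points $\cl(k)^L$ have strictly increasing $H$-coordinate along $\layers$, whence ``no displaced layer'' means precisely that $k\mapsto\cl(k)^L$ is $\leq_{\mathrm{nef}}$-monotone. A further consequence is that any two consecutive thick layers sit at $H$-distance exactly $1$. Finally, no forbidden difference can occur inside a single thick layer: a free layer is a chain, whose internal differences are $(\pm d,0)$ with $0\leq d\leq\lH$; and slimness nests the $H$-layers of $\cX$ (and of $\Zbound$) into intervals of $h$-width $\leq\lH$, too narrow to yield a difference in $\CH^\lH(X)$.

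The equivalence itself then rests on one inequality, built from $\cl(k)^R=\cl(k)^L+(\lH,0)$ and the corner estimates \eqref{eq:ZRandXL}; write $(\cdot)_h$ for the $h$-coordinate. For $k'<k$ in $\layers$ and $A\in\cl(k')$, $B\in\cl(k)$ one has $A\leq_{\mathrm{nef}}\cl(k')^L+(\lH,0)$ and $\cl(k)^L\leq_{\mathrm{nef}}B$; if in addition $B-A\in\CH^\lH(X)$, comparing $h$-coordinates gives
\[
\cl(k)^L_h\leq B_h\leq A_h-\lH-1\leq\cl(k')^L_h-1,
\]
so that $\cl(k')^L\not\leq_{\mathrm{nef}}\cl(k)^L$, i.e. $\cl(k)$ is displaced. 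This settles one direction: if no layer is displaced then (together with the internal exclusion) no such pair exists and $\cl$ is strongly exceptional. For the converse, a non-monotone sequence has a non-monotone \emph{consecutive} step; choosing a consecutive displaced pair $k'<k$ and the extreme representatives $A=\cl(k')^R$, $B=\cl(k)^L$, the gap-$1$ observation makes $B-A=(i_0,1)$ with $i_0\leq-\lH-1$, which lies in $\CF\cap\CH^\lH(X)$; thus $(A,B)\in\pF(\cl)$ with $B-A\notin\nAcyc(X)$, and $\cl$ fails to be strongly exceptional.

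I expect the bookkeeping around the thick layers $\cl(\pm\infty)$ to be the point needing most care, rather than the conceptual reduction of the first paragraph: one must check that the extreme representatives used are genuine members of $\cl$ (the formal corners $\cl(\infty)^R$ and $\cl(-\infty)^L$ are not, but they are never invoked, since $\pm\infty$ sit at the ends of $\layers$), that \eqref{eq:ZRandXL} really supplies the $h$-coordinate estimates for $\Zbound$ and $\cX$, and that the consecutive difference $(i_0,1)$ indeed lands in $\CF$ — which it does as soon as $\lV\geq1$, because its $H$-coordinate $1$ keeps its negative inside the unsheared immaculate horizontal strip.
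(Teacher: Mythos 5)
Your overall strategy coincides with the paper's: the paper's ``proof'' is precisely the informal discussion opening Subsection~\eqref{subsec:StrongExSecTV} (supported by Figures~\ref{fig:StrongExSecPC} and~\ref{fig:BadDisplaced}), namely the reduction via Proposition~\ref{prop:StrongExcep} to excluding differences of $\pF(\cl)$-pairs from the maculate region, which your first paragraph correctly sharpens to the single cone $\CH^\lH(X)$. Your treatment of pairs lying in two \emph{different} thick layers is correct and makes the pictorial argument precise in both directions: the chain of inequalities $\cl(k)^L_1\leq B_1\leq A_1-\lH-1\leq\cl(k')^L_1-1$ for one implication, and for the converse the passage to a consecutive descent together with the corner pair $A=\cl(k')^R$, $B=\cl(k)^L$ at height-distance one, whose difference $(i_0,1)$, $i_0\leq-\lH-1$, indeed lies in $\CF\cap\CH^\lH(X)$; all corner points invoked are genuine elements of $\cl$.

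There is, however, a genuine gap in the case you dispose of in one line: pairs inside the single thick layer $\cl(-\infty)=\Zbound$ in the twisted case. The justification ``slimness nests the $H$-layers of $\cX$ (and of $\Zbound$) into intervals of $h$-width $\leq\lH$, too narrow to yield a difference in $\CH^\lH(X)$'' is false for $\Zbound$: writing $R_j=\max\cX(j)_1$ and $L_j=\min\cX(j)_1$, slimness forces $R_{j+1}\leq R_j-\xa$, so the chains, and with them the layers $\cZ(k)=\cY(k+\lV+1)+(\xb,-\lV-1)$, drift to the \emph{left} by at least $\xa$ per level as $k$ grows; once $\Zbound$ occupies two or more heights its total $h$-width is at least $\lH+\xa>\lH$, and the narrowness of each individual height-layer says nothing about a difference $B-A$ with $A\in\cZ(k_1)$, $B\in\cZ(k_2)$, $k_1<k_2$. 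This case genuinely matters, because such a pair is invisible to the displacement criterion (which only sees the corners $\cl(k)^L$ for $k\in\layers$), so if such a difference could reach $\CH^\lH(X)$ the theorem itself would be false. The correct argument uses slimness in a different way: each $\cX(j)$ is an interval $[L_j,R_j]$ abutting the right end of its chain, so the rightmost point of $\cZ(k_1)$ has $h$-coordinate $L_{k_1+\lV+1}-1+\xb$ (it stops just left of the $\cX$-footprint, \emph{not} at the chain's right end), while the leftmost point of $\cZ(k_2)$ has $h$-coordinate $R_{k_2+\lV+1}-\lH+\xb$; since slimness makes the left ends $L_j$ weakly increasing in $j$, one gets $(B-A)_1\geq R_{k_2+\lV+1}-L_{k_1+\lV+1}-\lH+1\geq-\lH+1>-\lH-1$, so no within-$\Zbound$ difference lands in $\CH^\lH(X)$. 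Note that this argument (and the theorem) also uses that the layers $\cX(j)$ are intervals, a property of the classification cited from~\cite{fesfano} which is not literally among (Ai)--(Aiii) as reproduced here; with a gapped layer the construction does not even produce an exceptional set, so this is harmless, but it is exactly the kind of bookkeeping your within-layer claim sweeps under the rug.
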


\subsection{Proper \effec\ \meset s.}
\label{subsec:Effective_TV}
Finally, we look for \effec\ \meset s which are {\em proper}, that is, which are not strongly exceptional. By Corollary~\ref{cor:ExSecPoset}, their existence requires $\Eff(X)\cap-\Imm(X)\not\subseteq\nAcyc(X)$.

\begin{proposition}
\label{prop:AcyclCrit}
$\Eff(X)\cap-\Imm(X)\subseteq\nAcyc(X)$ if and only if $\lH\geq\alpha\lV$. In particular, $X$ must be necessarily Fano.
\end{proposition}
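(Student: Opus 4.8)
The plan is to collapse the two-sided condition to a single cohomology locus and then exhibit an explicit boundary witness. First I would note that $\Eff(X)=\CH^0(X)=\langle(1,0),(-\xa,1)\rangle$ lies entirely in the half-plane $\{j\geq0\}$, whereas by~\eqref{eq:MacLocusTC} both $\CH^\lV(X)$ and $\CH^{\lH+\lV}(X)$ sit in $\{j<0\}$. Since $\nAcyc(X)$ is the complement of $\CH^\lH(X)\cup\CH^\lV(X)\cup\CH^{\lH+\lV}(X)$, the only higher cohomology a point of $\Eff(X)$ can carry is $H^\lH$. Thus the containment $\Eff(X)\cap-\Imm(X)\subseteq\nAcyc(X)$ is equivalent to the single assertion
\[
\Eff(X)\cap\big(-\Imm(X)\big)\cap\CH^\lH(X)=\varnothing.
\]

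Next I would make this region explicit. A point $(i,j)$ lies in $\Eff(X)\cap\CH^\lH(X)$ exactly when $-\xa j\leq i\leq-\lH-1$, which already forces $\xa j\geq\lH+1$, hence $j\geq1$. For such a point, $(i,j)\in-\Imm(X)$ means $(-i,-j)\in\Imm(X)$; as $-j<0$, this can only fail through membership of $(-i,-j)$ in $\CH^\lV(X)$ or $\CH^{\lH+\lV}(X)$. I would therefore organise the analysis according to whether $j\leq\lV$ or $j\geq\lV+1$, since both lower loci require second coordinate $\leq-\lV-1$.

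For $j\leq\lV$ the translate $(-i,-j)$ has second coordinate $-j>-\lV-1$ and so avoids both lower loci automatically; hence \emph{every} point of $\Eff(X)\cap\CH^\lH(X)$ with $1\leq j\leq\lV$ violates the containment. Such a point exists iff $\xa\lV\geq\lH+1$, i.e.\ iff $\lH<\xa\lV$, with explicit witness $(-\lH-1,\lV)$. Conversely, if $\lH\geq\xa\lV$ no point survives: for $j\leq\lV$ one has $\xa j\leq\xa\lV\leq\lH<\lH+1$, so the interval $[-\xa j,\,-\lH-1]$ is empty; and for $j\geq\lV+1$, avoiding $\CH^\lV(X)$ would force $i>-\xb$, incompatible with $i\leq-\lH-1$ unless $\xb\geq\lH+2$, which is excluded by the effective inequality~\eqref{eq:EffIneq}, $\xb\leq\xa\lV\leq\lH$. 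This yields the stated equivalence.

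Finally, the Fano claim is immediate from the same inequality: if $\lH\geq\xa\lV$ then $\xb\leq\xa\lV\leq\lH$, so $-K_X=(\lH+1-\xb,\lV+1)$ has both natural coordinates strictly positive and hence lies in the interior of $\Nef(X)=\langle h,H\rangle$, that is, $-K_X$ is ample. The only obstacle I anticipate is bookkeeping: correctly tracking into which of the four cones a translated lattice point $(-i,-j)$ falls. The clean split by $j\leq\lV$ versus $j\geq\lV+1$, combined with the effective inequality, is precisely what keeps this manageable and isolates $(-\lH-1,\lV)$ as the decisive witness.
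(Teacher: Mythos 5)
Your proof is correct and follows essentially the same route as the paper: both reduce the containment to the emptiness of the triple intersection $\Eff(X)\cap-\Imm(X)\cap\CH^\lH(X)$ (since effective classes have $j\geq0$, the only possible higher cohomology is $H^\lH$), and both then pin down the critical point at $j=\lV$ with abscissa $-\xa\lV$, invoking the effective inequality $\xb\leq\xa\lV$ to dispose of the region $j\geq\lV+1$ --- your explicit case split and witness $(-\lH-1,\lV)$ just spell out what the paper reads off from Figure~\ref{fig:AcyclCrit} via its ``minimal abscissa'' argument. The only minor divergence is the Fano claim, which the paper settles by citing Kleinschmidt, while you argue directly that $-K_X=(\lH+1-\xb,\lV+1)$ lies in the interior of $\Nef(X)=\langle h,H\rangle$; both hinge on the same inequality $\xb\leq\lH$.
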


\begin{proof}
Assume that $\CL=(i,j)\in\Eff(X)\cap-\Imm(X)$ is not acyclic. This happens precisely if there exists $1\leq k\leq\dim X=\lH+\lV$ with $H^k(X,\CL)\not=0$. Since $\CL\in\Eff(X)$ implies $j\geq0$, we deduce from~\eqref{eq:MacLocusTC} on Page~\pageref{eq:MacLocusTC} that $k=\lH$. Hence   
\[
\Eff(X)\cap-\Imm(X)\subseteq\nAcyc(X)\hspace{0.8em}\mbox{if and only if}\hspace{0.8em}\Eff(X)\cap-\Imm(X)\cap\CH^\lH(X)=\varnothing.
\]

\medskip

Since $\xb\leq\xa\lV$ by the effective inequality~\eqref{eq:EffIneq} on Page~\pageref{eq:EffIneq}, points of minimal abscissa in $\Eff(X)\cap-\Imm(X)$ must be in the horizontal strip and therefore in $\{x=-\xa\lV\}$, cf.\ also Figure~\ref{fig:AcyclCrit}. Hence $\Eff(X)\cap-\Imm(X)\cap\CH^\lH(X)$ is empty if and only if $-\lH-1<-\xa\lV$, or equivalently, $\lH\geq\xa\lV$. In particular, $\lH\geq\xb$, which by~\cite{kle88} implies that $X$ must be Fano.
\end{proof}

\begin{figure}
\newcommand{\spaceA}{\hspace{10pt}}
\newcommand{\scaleA}{0.3}
\begin{tikzpicture}[scale=\scaleA]
\draw[color=gray!40] 
    (-7.3,-3.3) grid (5.3,7.3);
\fill[pattern color=oliwkowy!60, pattern=north east lines]
    (-7.3,0) -- (-3,0) -- (-3,7.3) -- (-7.3,7.3) -- cycle;
\draw[thick, color=oliwkowy!60]
    (-3,7.3) -- (-3,0) -- (-7.3,0);
\fill[pattern color=oliwkowy!60, pattern=north west lines]
    (-7.3,3.65) -- (0,0) -- (5.3,0) -- (5.3,7.3) -- (-7.3,7.3);
\draw[thick, color=oliwkowy!40]
    (-7.3,3.65) -- (0,0) -- (5.3,0);
\draw[thick, color=magenta] (-7.3,2) -- (5.3,2);
\draw[thick, color=magenta] (-7.3,1) -- (5.3,1);
\fill[pattern=north west lines, pattern color=magenta!40] (-7.3,2) -- (5.3,2) -- (5.3,1) -- (-7.3,1);
\fill[color=magenta] (1,0) circle (7pt) (2,0) circle (7pt) (-1,3) circle (7pt) (0,3) circle (7pt);
\draw[color=black]
    (-3,0) node[below]{$\scriptstyle-\lH-1$};
\fill[color=black]
    (0,0) circle (7pt) node[below]{$\scriptstyle\CO_X$};
\draw[thick, color=black]
    (-4,2) circle (7pt) node[below]{$\scriptstyle(-\xa\lV,\lV)$};
\draw[thick, color=black]
    (2.0,5.2) node[fill=white]{$\scriptstyle\Eff(X)$}
    (4.5,1.6) node[fill=white]{$\scriptstyle-\Imm(X)$}
    (-7,3.4) node[fill=white]{$\scriptstyle\CH^\lH$};
\end{tikzpicture}
\spaceA
\begin{tikzpicture}[scale=\scaleA]
\draw[color=gray!40] 
    (-7.3,-3.3) grid (5.3,7.3);
\fill[pattern color=oliwkowy!60, pattern=north east lines]
    (-7.3,0) -- (-5,0) -- (-5,7.3) -- (-7.3,7.3);
\draw[thick, color=oliwkowy!60]
    (-7.3,0) -- (-5,0) -- (-5,7.3);
\fill[pattern color=oliwkowy!60, pattern=north west lines]
    (-7.3,7.3) -- (0,0) -- (5.3,0) -- (5.3,7.3);
\draw[thick, color=oliwkowy!60]
    (-7.3,7.3) -- (0,0) -- (5.3,0);
\draw[thick, color=magenta] (-7.3,3) -- (5.3,3);
\draw[thick, color=magenta] (-7.3,1) -- (5.3,1);
\fill[pattern=north west lines, pattern color=magenta!40] (-7.3,3) -- (5.3,3) -- (5.3,1) -- (-7.3,1);
\fill[color=magenta] (1,0) circle (7pt) (2,0) circle (7pt) (3,0) circle (7pt) (4,0) circle (7pt) (2,-1) circle (7pt) (3,-1) circle (7pt) (4,-1) circle (7pt) (3,-2) circle (7pt) (4,-2) circle (7pt) (4,-3) circle (7pt);
\fill[color=magenta] (-1,4) circle (7pt) (0,4) circle (7pt) (1,4) circle (7pt) (2,4) circle (7pt) (-1,5) circle (7pt) (0,5) circle (7pt) (1,5) circle (7pt) (-1,6) circle (7pt) (0,6) circle (7pt) (-1,7) circle (7pt);
\draw[color=black]
     (-5,0) node[below]{$\scriptstyle{-\lH-1}$};
\fill[color=black]
    (0,0) circle (7pt) node[below]{$\scriptstyle\CO_X$};
\draw[thick, color=black]
    (-3,3) circle (7pt) node[below right]{$\scriptstyle(-\xa\lV,\lV)$};
\draw[thick, color=black]
    (2.5,6.2) node[fill=white]{$\scriptstyle\Eff(X)$}
    (2.5,2.0) node[fill=white]{$\scriptstyle-\Imm(X)$}
    (-7,3.4) node[fill=white]{$\scriptstyle\CH^\lH(X)$};
\end{tikzpicture}
\caption{The triple intersection $\Eff(X)\cap-\Imm(X)\cap\CH^\lH(X)$. The left-hand side displays $X(\lH,\lV;c)$ for $\lH=2=\lV=2$ and $\xa=\xb=2$. For the right-hand side we have $\lH=4$, $\lV=3$ and $\xa=1$, $\xb=2$. In particular, Proposition~\ref{prop:AcyclCrit} applies.}
\label{fig:AcyclCrit}
\end{figure}

Next we characterise \effec\ sets in terms of layers as follows. For $k\in\layers$, we call $\cl(k)$ {\em bad} if there exists $k'\in\layers$ with 
\[
k'<k,\hspace{0.8em}\cl(k')^L\not\leq_\nef\cl(k)^L\hspace{0.8em}\mbox{and}\hspace{0.8em}\cl(k')^R\not\leq_{\mathrm{eff}}\cl(k)^L.
\]
In particular, a bad layer is necessarily displaced, but the converse is false as shown by Figure~\ref{fig:BadDisplaced}.

\begin{figure}[ht]
\newcommand{\spaceA}{\hspace{10pt}}
\newcommand{\scaleA}{0.3}
\begin{tikzpicture}[scale=\scaleA]
\draw[color=gray!40] (-4.3,-1.3) grid (5.3,8.3);
\draw[thick, color=pink]
    (-1.5,7.5) -- (-1.5,3.43) -- (2.5,3.43) -- (-1.5,7.5)
    (0.2,7.1) node{$\scriptstyle\DeltaUp$};
\fill[thick, color=colorX]
    (0,4) circle (7pt) (1,4) circle (7pt)
    (0,5) circle (7pt);
\draw[thick, color=colorX]
    (0,4) circle (10pt) (3,4) circle (10pt);
\draw[thick, color=black]
    (0,0) circle (10pt) node[below]{$\scriptstyle\CO_X$};
\fill[thick, color=colorZ]
    (0,0) circle (7pt) (1,0) circle (7pt) 
    (-1,1) circle (7pt) (0,1) circle (7pt) (1,1) circle (7pt) 
    (-1,2) circle (7pt) (0,2) circle (7pt) 
    (1,2) circle (7pt)  (2,2) circle (7pt)
    (-1,3) circle (7pt) (0,3) circle (7pt) 
    (1,3) circle (7pt)  (2,3) circle (7pt);
\draw[thick, color=colorZ]
    (-2,1) circle (10pt) (1,1) circle (10pt)
    (-1,2) circle (10pt) (2,2) circle (10pt)
    (-1,3) circle (10pt) (2,3) circle (10pt);
\draw[thick, color=black]
    (-6,4) node{$\scriptstyle\cl(\infty)^L$}
    (-6,3) node{$\scriptstyle\cl(3)^L$}
    (-6,2) node{$\scriptstyle\cl(2)^L$}
    (-6,1) node{$\scriptstyle\cl(-\infty)^L$};
\end{tikzpicture}
\spaceA
\begin{tikzpicture}[scale=\scaleA]
\draw[color=gray!40] (-4.3,-1.3) grid (5.3,8.3);
\draw[very thick, color=pink]
    (-1.5,7.5) -- (-1.5,3.43) -- (2.5,3.43) -- (-1.5,7.5)
    (0.2,7.1) node{$\scriptstyle\DeltaUp$};
\fill[thick, color=colorX]
    (0,4) circle (7pt) (1,4) circle (7pt) (0,5) circle (7pt);
\draw[thick, color=colorX]
    (0,4) circle (10pt) (3,4) circle (10pt);
\fill[thick, color=colorZ]
    (0,0) circle (7pt) (1,0) circle (7pt) 
    (-1,1) circle (7pt) (0,1) circle (7pt) (1,1) circle (7pt) 
    (1,2) circle (7pt) (2,2) circle (7pt) 
    (3,2) circle (7pt)  (4,2) circle (7pt)
    (-1,3) circle (7pt) (0,3) circle (7pt) 
    (1,3) circle (7pt)  (2,3) circle (7pt);
\draw[thick, color=black]
    (0,0) circle (10pt) node[below]{$\scriptstyle\CO_X$};
\draw[thick, color=colorZ]
    (-2,1) circle (10pt) (1,1) circle (10pt)
    (1,2) circle (10pt) (4,2) circle (10pt)
    (-1,3) circle (10pt) (2,3) circle (10pt);
\draw[thick, color=black]
    (7,4) node{$\scriptstyle\cl(\infty)^R$}
    (7,3) node{$\scriptstyle\cl(3)^R$}
    (7,2) node{$\scriptstyle\cl(2)^R$}
    (7,1) node{$\scriptstyle\cl(-\infty)^R$};
\end{tikzpicture}
\caption{Examples of strongly and proper \effec\ exceptional sets on $X(3,3;(0,0,-2))$. On the left hand side no layer is displaced. On the right hand side, the layer $\cl(3)$ is bad while $\cl(\infty)$ is displaced, but not bad.}
\label{fig:BadDisplaced}
\end{figure}

\begin{proposition}
\label{prop:EffMES}
Let $X=X(\lH,\lV;c)$. A \meset\ $\cl$ is \effec\ if and only if there are no bad layers.  
\end{proposition}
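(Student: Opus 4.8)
The plan is to reduce \effec ity to the non\hyp existence of a single ``violating'' difference and then to locate that difference from the layer endpoints $\cl(k)^L,\cl(k)^R$. By Lemma~\ref{lem:deltaF} and the definition of $\Eff(\cl)$, the set $\cl$ is \effec\ iff $\pF(\cl)=\Eff(\cl)$, that is, iff there is no pair $A\neq B$ in $\cl$ with
\[
B-A\in\big(-\Imm(X)\setminus\Imm(X)\big)\setminus\Eff(X);
\]
I call such a pair \emph{violating}. The whole statement then becomes: a violating pair exists iff some layer is bad.

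\emph{Normal form for a violating difference.} Writing $B-A=(i,q)$ in nef coordinates, I first confine $q$. A same\hyp height pair has horizontal, hence effective, difference, so $q\geq1$; and if $q\geq\lV+1$ then membership in $-\Imm(X)$ squeezes $i$ between the reflected cones $-\CH^\lV(X)$ and $-\CH^{\lH+\lV}(X)$ of~\eqref{eq:MacLocusTC}, whereupon the effective inequality $\xb\leq\xa\lV$ of~\eqref{eq:EffIneq} forces $(i,q)\in\Eff(X)$ --- impossible. Hence $1\leq q\leq\lV$, the whole line $[y=q]$ lies in $-\Imm(X)$, and from the shape of $\Imm(X)$ in~\eqref{eq:MacLocusTC} one reads off
\[
\big(\mc F\setminus\Eff(X)\big)\cap[y=q]=\{\,i\leq-\max(\lH,\xa q)-1\,\}.
\]
Equivalently, a pair is violating iff $1\leq q\leq\lV$ together with the two inequalities $i<0$ and $i+\xa q<0$; these are precisely the nef and the effective condition in the definition of ``bad''.

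\emph{Reduction to the reference endpoints.} Ascending slimness~(Aiii) makes the left edges of the layers of $\cX$ non\hyp decreasing upwards; transporting this through $\cZ=\cY+(\xb,-\lV-1)$ and using the vertical slimness~\eqref{eq:ZRandXL}, one obtains, for every $B$ in a thick layer $\cl(k)$ and every $A$ in a thick layer $\cl(k')$ with $A$ before $B$,
\[
B-\cl(k)^L\in\Nef(X)
\qquad\text{and}\qquad
\cl(k')^R-A\in\Nef(X),
\]
recalling $\Nef(X)=\langle h,H\rangle$ from~\eqref{eq:NefCone}. Subtracting these two nef (hence effective) vectors from $B-A$ leaves ``$\notin\Eff(X)$'' intact, does not increase $q$ (the corners are the innermost heights of their layers), and only lowers the $h$\hyp coordinate; in particular a pair inside a single thick layer has $i\geq-\lH$ and is never violating, so $k'<k$. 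Thus the reduced difference $\cl(k)^L-\cl(k')^R=(p-\lH,q)$, with $(p,q):=\cl(k)^L-\cl(k')^L$, is again violating, and the normal form yields $p<0$ and $p-\lH+\xa q<0$, i.e.\ $\cl(k')^L\not\leq_\nef\cl(k)^L$ and $\cl(k')^R\not\leq_{\mathrm{eff}}\cl(k)^L$: the layer $\cl(k)$ is bad. Conversely, for a bad layer the endpoint pair $\big(\cl(k')^R,\cl(k)^L\big)$ consists of honest elements of $\cl$, and the same computation shows it is violating, so $\cl$ is not \effec.

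\emph{Main obstacle.} The delicate step is this reduction: the thick layers $\cl(-\infty)=\Zbound$ and $\cl(\infty)=\cX$ genuinely span several heights, so I must check that pushing $A,B$ to the reference corners $\cl(k')^R,\cl(k)^L$ neither leaves the strip $1\leq q\leq\lV$ nor destroys violation. This is exactly where the geometry of the admissible set is used: ascending slimness keeps the difference nef\hyp dominated as in the two displayed membership, while~\eqref{eq:ZRandXL} guarantees that $\cl(-\infty)^R$ and $\cl(\infty)^L$ really are the extreme elements of their layers. Everything else is the bookkeeping of~\eqref{eq:MacLocusTC} already assembled for Theorem~\ref{thm:pO}, the effectivity analogue of which this proposition is.
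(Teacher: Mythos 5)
Your strategy coincides with the paper's: reduce failure of being \effec\ to the existence of a single pair in $\pF(\cl)$ with non-effective difference, locate where such a difference can lie using \eqref{eq:MacLocusTC}, push the pair to the layer endpoints $\cl(k')^R$, $\cl(k)^L$, and identify the resulting inequalities with the two conditions defining a bad layer; the converse direction (feeding the endpoint pair of a bad layer back in) is also the paper's. Your reduction via the two nef-dominations $B-\cl(k)^L$ and $\cl(k')^R-A$ is essentially a repackaging of the paper's inequality chain, and that part is sound.

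There is, however, a genuine gap in your ``normal form'': you never exclude violating pairs with $q<0$. You dispose of $q=0$ (horizontal differences of a pair in $\pF(\cl)$ are effective) and of $q\geq\lV+1$ (the cone computation with the effective inequality), and then conclude ``so $q\geq1$'' --- but that inference presupposes $q\geq0$, which is not automatic. It is also not a consequence of the cone geometry alone: violating lattice vectors with $q\leq-\lV-1$ do exist, e.g.\ in the product case $(1,-\lV-1)$ lies in $\CH^{\lV}(X)$, hence outside $\Imm(X)$, lies in $-\Imm(X)$, and is not effective. What rules such vectors out as differences of elements of $\cl$ is the structural fact that the classified \meset s admit the vertical lexicographic order as an exceptional order, so that by Proposition~\ref{prop:pFpO}(i) every pair $(A,B)\in\pF(\cl)$ satisfies $A<_{\mathrm{vert}}B$, i.e.\ $q\geq0$. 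This is exactly the step ``$A<_{\mathrm{vert}}B$'' in the paper's proof, which your write-up omits; without it, the forward direction could in principle miss violating pairs that do not come from bad layers. The fix is one sentence, but as written the argument is incomplete. Two smaller slips: the sentence asserting that a pair is violating iff ``$i<0$ and $i+\xa q<0$'' is false for the pair difference itself (your own displayed formula gives $i\leq-\lH-1$ and $i+\xa q<0$; the shifted version in left-endpoint coordinates, which you state and use correctly in the reduction paragraph, is the right one), and in the converse the pair $\big(\cl(-\infty)^R,\cl(\infty)^L\big)$ can have height difference $\lV+1$ when $\cX$ has a single nonempty layer, a case your strip formula does not cover --- badness is in fact unsatisfiable there, but this deserves a word.
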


\begin{proof}
We will prove the contraposition.

\medskip

Assume first that for $k\in\layers$, $\cl(k)$ is a bad layer. Taking $k'\in\layers$ with $k'<k$, $\cl(k')^L\not\leq_\nef\cl(k)^L$ and $\cl(k')^R\not\leq_{\mathrm{eff}}\cl(k)^L$, we have $\cl^L(k)_1-\cl^R(k')_1\leq-\lH-1$. Therefore, $\cl^L(k)-\cl^R(k')\in\CH^\lH(X)\cap-\Imm(X)$ so that $\cl^L(k)-\cl^R(k')\not\in\Imm(X)$. It follows that $\big(\cl^R(k'),\cl^L(k)\big)\in\pF(\cl)$ which implies that $\cl$ is not \effec.

\medskip

Conversely, assume that $\cl$ is not \effec. Then there exists a pair $(\cA,\cB)\in\pF(\cl)$ with $B-A\not\in\Eff(X)$. In particular, $A<_{\mathrm{vert}}B$ and $A-B\not\in-\Imm(X)$. It follows that $1\leq(B-A)_2\leq\lV$ and
\[
B-A\in\Z\times\Z_{\geq0}\setminus\big(\Eff(X)\cup(\Imm(X)\cap\Z\times\Z_{\geq0})\big)\subseteq\CH^\lH(X)
\]
whence $B_1-A_1\leq-\lH-1$. 

\medskip

Therefore, we cannot have $\cA$ and $\cB$ both in $\Zbound$ nor in $\cX$ whence $\cA\in\Zbound$ and $\cB\in\cZ_{\mathrm{free}}$, or $\cA\in\cZ_{\mathrm{free}}$ and $\cB\in\cX$. Put $b:=\cB_2$ if $\cB\in\cZ_{\mathrm{free}}$ and $b:=\infty$ otherwise. Similarly, $a:=\cA_2$ if $\cA\in\cZ_{\mathrm{free}}$ and $a:=-\infty$ otherwise. Then $a$, $b\in\layers$ and $a<b$. Further, $B-A\not\in\Eff(X)$ implies $\cl(a)^R\not\leq_{\mathrm{eff}}\cl(b)^L$. Finally,
\[
\cl(b)^L_1-\cl(a)^L_1-\ell=\cl(b)^L_1-\cl(a)^R_1\leq\cB_1-\cA_1\leq-\lH-1.
\]
Hence $\cl(b)$ is a bad layer.
\end{proof}

Figure~\ref{fig:HSBadLayer} illustrates an example of an \effec, yet not strongly exceptional set.

\begin{figure}
\begin{tikzpicture}[scale=0.4]
\draw[color=gray!40] 
    (-6.3,-2.3) grid (3.3,4.3);
\fill[pattern color=oliwkowy!60, pattern=north east lines]
    (-6.3,0) -- (-2,0) -- (-2,4.3) -- (-6.3,4.3);
\draw[thick, color=oliwkowy!60]
    (-6.3,0) -- (-2,0) -- (-2,4.3);
\fill[pattern color=oliwkowy!60, pattern=north west lines]
    (-6.3,3.15) -- (0,0) -- (3.3,0) -- (3.3,4.3) -- (-6.3,4.3) -- cycle;
\draw[thick, color=oliwkowy!60]
    (-6.3,3.15) -- (0,0) -- (3.3,0);
\draw[very thick, color=magenta]
    (-6.3,1) -- (3.3,1);
\fill[pattern color=yellow!40, pattern=north east lines]
    (0,4.3) -- (0,0) -- (3.3,0) -- (3.3,4.3) -- cycle;
\fill[thick, color=black]
    (0,0) circle (5pt);
\draw[thick, color=black]
    (0,0) circle (7pt) (-1,0) circle (7pt);
\draw[thick, color=black]
    (-2,1) circle (7pt) (-1,1) circle (7pt);
\fill[thick, color=black]
    (-2,1) circle (5pt) (-1,1) circle (5pt);
\draw[thick, color=black]
    (-1,2) circle (7pt) (0,2) circle (7pt);
\fill[thick, color=black]
    (-1,2) circle (5pt);
\draw[very thick, color=red]
    (-1.5,1.5) -- (-1.5,3) -- (0,1.5) -- cycle;
\draw[very thick, color=red]
    (-0.2,3.1) node{$\scriptstyle\DeltaUp$};
\draw[color=black]
    (0.1,-0.3) node[below]{$\scriptstyle\CO_X$}
    (-7.8,2) node{$\scriptstyle\cl(\infty)^L$}
    (-7.8,1) node{$\scriptstyle\cl(1)^L$}
    (-7.8,0) node{$\scriptstyle\cl(-\infty)^L$};
\draw[color=black]
    (5,2) node{$\scriptstyle\cl(\infty)^R$}
    (5,1) node{$\scriptstyle\cl(1)^R$}
    (5,0) node{$\scriptstyle\cl(-\infty)^R$};
\end{tikzpicture}
\caption{On the second Hirzebruch surface $X=X(1,1;(0,-2))$, the exceptional set given by the black points is \effec, yet not strongly exceptional: The layer $\cl(1)$ is displaced, but not bad.}
\label{fig:HSBadLayer}
\end{figure}

\section{The projective bundle of the cotangent bundle}
\label{sec:CoTang}
In this section we take $\CE=\Omega_{\P^\lH}(1)$, $\lH\geq2$, and look at the projectivisation
\[
\pi\colon X_\lH:=\P(\Omega_{\P^\lH}(1))\to\P^\lH. 
\]
The twist by $\CO_{\P^\lH}(1)$ implies that the natural coordinates on $\Pic(X)$ are nef, see Remark~\ref{rem:POmegaNatural}.

\subsection{Immaculate locus}
\label{subsec:ImmLocXl}
We first compute the immaculate locus of $X_\lH$. For the following theorem, we recall the definition of the binomial coefficient
\begin{equation}
\label{eq:binom}
{\alpha \choose k}=\frac{\alpha(\alpha-1)\cdots(\alpha-k+1)}{k!}
\end{equation}
where $\alpha\in\R$ and $k\in\Z_{\geq0}$. By convention, the empty product is one. In particular, this allows us to write
\[
\chi(\CO_{\P^\lH}(i))={i+\lH\choose\lH}
\]
for any $i\in\Z$.

\begin{theorem}
\label{thm:CohomPOmega}
Let $\CL=(i,j)$ be a line bundle on $X_\lH=\P(\Omega_{\P^\lH}(1))$ with respect to the natural coordinates induced by $\Omega_{\P^\lH}(1)$. For $j\geq0$ we have
\begin{itemize}
\item $\CL\in\CH^0(X_\lH)\iff(i,j)\in\cone{(1,0),(0,1)}$

\smallskip

\item $\CL\in\CH^{\lH-1}(X_\lH)\iff(i,j)\in(-\lH,1)+\cone{(0,1),(-1,1)}$

\smallskip

\item $\CL\in\CH^\lH(X_\lH)\iff(i,j)\in(-\lH-1,0)+\cone{(-1,1),(-1,0)}$.
\end{itemize}
For $j<0$ we have
\begin{itemize}
\item $\CL\in\CH^{\lH-1}(X_\lH)\iff(i,j)\in(-\lH,1)+\cone{(1,0),(1,-1)}$

\smallskip

\item $\CL\in\CH^{\lH}(X_\lH)\iff(i,j)\in(0,-\lH-1)+\cone{(1,-1),(0,-1)}$

\smallskip

\item $\CL\in\CH^{2\lH-1}(X_\lH)\iff(i,j)\in(-\lH,-\lH)+\cone{(-1,0),(0,-1)}$;
\end{itemize}
any other $\CH^k(X_\lH)$ is empty. In particular, at most one cohomology group is nontrivial. Furthermore, the Euler characteristic of $\CL=(i,j)$ is equal to
\[
\chi(X_\lH,\CL)=\binom{i+\lH}{\lH} \binom{j+\lH}{\lH}-\binom{i+\lH-1}{\lH} \binom{j+\lH-1}{\lH}.
\] 
\end{theorem}

\begin{corollary}
\label{coro:cohomPOmega}
$\CL = (i,j)$ is immaculate if and only if it lies in the union of the horizontal strip $\mathcal{H}:=\{(i,j)\in\Z^2\mid -\lH+1\leq j\leq-1\}$, the {\em vertical strip} $\mathcal{V}:=\{(i,j)\in\Z^2\mid-\lH+1\leq i\leq-1\}$ and the {\em anti-diagonal} $\mathcal{D}:=\{(i-\lH,-i)\in\Z^2\mid i\in\Z\}$. 
\end{corollary}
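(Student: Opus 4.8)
The plan is to realise $\Imm(X_\lH)$ as the complement of the six maculate regions $\CH^k(X_\lH)$ listed in Theorem~\ref{thm:cohomPOmega} and to identify this complement with $\mathcal H\cup\mathcal V\cup\mathcal D$. First I would rewrite each affine cone from Theorem~\ref{thm:cohomPOmega} as the pair of linear inequalities cutting it out, working in the three coordinates $i$, $j$ and the anti-diagonal coordinate $i+j$; all six cones are then bounded by the lines $i=0$, $i=-\lH$, $j=0$, $j=-\lH$ and $i+j=-\lH,\,-\lH\pm1$. A lattice point is immaculate precisely when it violates at least one inequality of each of the six cones, and the assertion is that the solution set of this combined condition is exactly the three strips.

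Rather than resolve the full system at once, I would cut the work down using Serre duality. Since $\dim X_\lH=2\lH-1$ and $K_{X_\lH}=(-\lH,-\lH)$ (read off from the apex of the top region $\CH^{2\lH-1}$), the involution $\iota\colon(i,j)\mapsto(-\lH-i,-\lH-j)$ satisfies $\iota(\CH^k)=\CH^{2\lH-1-k}$ and hence preserves $\Imm(X_\lH)$; a direct check shows $\iota$ also maps each of $\mathcal H$, $\mathcal V$, $\mathcal D$ onto itself. As $\iota$ interchanges the half-plane $\{j\geq0\}$ with $\{j\leq-\lH\}$ and fixes the band $\{-\lH+1\leq j\leq-1\}$ setwise, it suffices to treat the region $j\geq0$ and the band, the region $j\leq-\lH$ then following by applying $\iota$.

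For $j\geq0$ only the three cones $\CH^0,\CH^1,\CH^\lH$ of part (i) can occur, and avoiding all of them reads as $i\leq-1$ (avoiding $\CH^0$), $i+j\geq-\lH$ (avoiding $\CH^\lH$), and $i\geq-\lH+1$ or $i+j\leq-\lH$ (avoiding $\CH^1$). Splitting according to whether $i+j=-\lH$ or $i+j\geq-\lH+1$ shows the survivors are exactly the points with $-\lH+1\leq i\leq-1$ together with those on $i+j=-\lH$, that is, $(\mathcal V\cup\mathcal D)\cap\{j\geq0\}$. On the band $-\lH+1\leq j\leq-1$ I would instead show that no cone meets it: those of part (i) require $j\geq0$, while those of part (ii) require either $j\leq-\lH$ or, in the case of $\CH^{2\lH-2}$, both $i\geq0$ and $i+j\leq-\lH-1$, the latter forcing $i\leq-2$ on the band; thus the whole band is immaculate and equals $\mathcal H$. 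Combining the three regions and using $\iota$ for $\{j\leq-\lH\}$ yields $\Imm(X_\lH)=\mathcal H\cup\mathcal V\cup\mathcal D$.

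The only genuine difficulty is the boundary bookkeeping. The cones abut along the lines $i=0$, $i=-\lH$ and $i+j=-\lH$, so I must keep careful track of which lattice points on these lines belong to which closed cone and verify that the rays constituting $\mathcal V$ and $\mathcal D$ fall into the gaps between cones rather than onto a cone face. As an independent safeguard one can substitute the borderline points into the explicit dimension formula of Theorem~\ref{thm:cohomPOmega}: its two binomial products cancel exactly on $\mathcal H\cup\mathcal V\cup\mathcal D$, which both confirms that the strips are cohomology-free and pins down the cone boundaries.
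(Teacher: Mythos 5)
Your proof is correct in substance and follows the route the paper itself intends: the corollary is simply the complement, inside $\Pic(X_\lH)$, of the union of the six maculate cones of Theorem~\ref{thm:cohomPOmega}, and the paper states it with no further argument. Your Serre-duality involution $\iota(i,j)=(-\lH-i,-\lH-j)$ is a genuine (if modest) economy: it halves the case analysis, and since the three strips $\mathcal{H}$, $\mathcal{V}$, $\mathcal{D}$ are $\iota$-stable, only the half-plane $j\geq0$ and the band need direct inspection. Your inequality descriptions of $\CH^0$, $\CH^1$, $\CH^\lH$, $\CH^{2\lH-2}$, $\CH^{2\lH-1}$ and the resulting case split for $j\geq0$ are all accurate.

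One step must be made explicit, however. Your claim that the part~(ii) cones other than $\CH^{2\lH-2}$ ``require $j\leq-\lH$'' is false for Theorem~\ref{thm:cohomPOmega} as literally printed: there $\CH^{\lH-1}$ is given the apex $(-\lH,1)$, and that cone does meet the band --- for instance, for $\lH=2$ the point $(0,-1)=(-2,1)+0\cdot(1,0)+2\cdot(1,-1)$ would then be maculate, contradicting the corollary and also Proposition~\ref{prop:cohomPBundleP}, which makes the whole band immaculate. The printed apex is a typo with the two coordinates transposed: the correct apex is $(1,-\lH)$, as drawn in Figure~\ref{fig:immaculatePOmegaL} and as forced by your own involution, since $\iota(\CH^{\lH})=\CH^{\lH-1}$ sends the apex $(-\lH-1,0)$ to $(1,-\lH)$ and the generators $(-1,1),(-1,0)$ to $(1,-1),(1,0)$. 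With that correction, $\CH^{\lH-1}\subseteq\{j\leq-\lH\}$ and your band argument goes through verbatim. So your proof is right, but as written it silently uses a corrected version of the statement it cites; you should either flag the typo or, cleaner still, derive the description of $\CH^{\lH-1}$ from that of $\CH^{\lH}$ via $\iota$ inside the proof, which costs one line and removes any dependence on the misprinted apex.
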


See also Figure~\ref{fig:immaculatePOmegaL} for a schematic sketch of the immaculate locus.

\begin{figure}[ht]
\begin{tikzpicture}[scale=0.4]
\draw[color=gray!40] (-7.3,-6.3) grid (4.3,4.3);
\fill[pattern=north west lines, pattern color=oliwkowy!60] (0,0) -- (0,4.3)-- (4.3,4.3) -- (4.3,0);
\fill[pattern=north east lines, pattern color=yellow!40] (0,0) -- (0,4.3)-- (4.3,4.3) -- (4.3,0);
\draw[color=black,inner sep=0] (5,2) node[fill=white] {$\scriptstyle\Eff(X)=\mr{Nef}(X)$};
\draw[->,color=black] (0,0) -- (5.3,0) node[right]{$\scriptstyle h$};
\draw[->,color=black] (0,0) -- (0,5.3) node[above]{$\scriptstyle H$};
\fill[thick, color=black] (0,0) circle (3pt) node[below]{$\scriptstyle\CO$};
\draw[very thick, color=magenta] (-7.3,-1) -- (-2,-1);
\draw[very thick, color=magenta] (-1,-1) -- (4.3,-1);
\draw[very thick, color=magenta] (-7.3,-2) -- (-2,-2);
\draw[very thick, color=magenta] (-1,-2) -- (4.3,-2);
\fill[pattern=north west lines, pattern color=magenta!40] (-7.3,-2) rectangle (4.3,-1);
\draw[very thick, color=magenta] (-2,-6.3) -- (-2,-2);
\draw[very thick, color=magenta] (-2,-1) -- (-2,4.3);
\draw[very thick, color=magenta] (-1,-6.3) -- (-1,-2);
\draw[very thick, color=magenta] (-1,-1) -- (-1,4.3);
\fill[pattern=north west lines, pattern color=magenta!40] (-2,-6.3) -- (-2,4.3) -- (-1,4.3) -- (-1,-6.3);
\draw[very thick, color=magenta] (-7.3,4.3) -- (-2,-1) (-1,-2) -- (3.3,-6.3);
\draw[very thick, color=oliwkowy!60] (0,0) -- (4.3,0);
\draw[very thick, color=oliwkowy!60] (0,0) -- (0,4.3);
\draw[very thick, color=oliwkowy!60] (-3,1) -- (-3,4.3);
\draw[very thick, color=oliwkowy!60] (-3,1) -- (-6.3,4.3);
\draw[color=black,inner sep=0]  (-3.5,4.5) node[fill=white] {$\scriptstyle\CH^{\lH-1}$};
\fill[pattern=vertical lines, pattern color=oliwkowy!60] (-3,1) -- (-6.3,4.3)-- (-3,4.3) -- cycle;
\draw[very thick, color=oliwkowy!60] (-4,0) -- (-7.3,3.3);
\draw[very thick, color=oliwkowy!60] (-4,0) -- (-7.3,0);
\fill[pattern=north east lines, pattern color=oliwkowy!60] (-4,0) -- (-7.3,0) -- (-7.3,3.3) -- cycle;
\draw[color=black,inner sep=0] (-7.5,1.5) node[fill=white] {$\scriptstyle\CH^\lH$};
\draw[very thick, color=oliwkowy!60] (-3,-3) -- (-7.3,-3);
\draw[very thick, color=oliwkowy!60] (-3,-3) -- (-3,-6.3);
\fill[thick, color=black] (-3,-3) circle (3pt) node[above]{$\scriptstyle K_X$};
\fill[pattern=north east lines, pattern color=oliwkowy!60] (-7.3,-3) -- (-3,-3) -- (-3,-6.3) -- (-7.3,-6.3);
\draw[color=black,inner sep=0] (-7.9,-4.5) node[fill=white] {$\scriptstyle\CH^{2\lH-1}$};
\draw[very thick, color=oliwkowy!60] (0,-4) -- (0,-6.3);
\draw[very thick, color=oliwkowy!60] (0,-4) -- (2.3,-6.3);
\fill[pattern=vertical lines, pattern color=oliwkowy!60] (0,-4) -- (0,-6.3) -- (2,-6.3) -- cycle;
\draw[color=black,inner sep=0] (1.5,-6.5) node[fill=white] {$\scriptstyle\CH^\lH$};
\draw[very thick, color=oliwkowy!60] (1,-3) -- (4.3,-3);
\draw[very thick, color=oliwkowy!60] (1,-3) -- (4.3,-6.3);
\fill[pattern=north west lines, pattern color=oliwkowy!60] (1,-3) -- (4.3,-3) -- (4.3,-6.3) -- cycle;
\draw[color=black,inner sep=0] (5.1,-4.5) node[fill=white] {$\scriptstyle\CH^{\lH-1}$};
\end{tikzpicture}
\caption{The immaculate locus of $X_\lH = \P(\Omega_{\P^\lH}(1))$ (drawn in magenta for $\lH=3$).}
\label{fig:immaculatePOmegaL}
\end{figure}

\medskip

\begin{proof}[Proof of Theorem~\ref{thm:CohomPOmega}]
Since $X_\lH=\P(\Omega_{\P^\lH})\cong\Flag(1,\lH,\lH+1)$, the theorem of Borel-Bott-Weil (see for instance~\cite{brionFlag} or~\cite{luisFlag}) immediately implies that there is at most one nontrivial cohomology group of $\CL$. In particular, up to a sign its dimension coincides with the Euler characteristic of $\CL$.

\medskip

Let now $\CL=(i,j)$ be a line bundle on $X_\lH$ with respect to natural coordinates. By Proposition~\ref{prop:CohomPBundleP}, $\CL$ is immaculate if $-\lH+1\leq j\leq-1$. Furthermore, $H^\bullet(X,(i,0))=H^\bullet(\P^\lH,\CO(i))$ so that $(i,0)$ lies in $\CH^\lH$ if $i\leq-\lH-1$, and in $\CH^0$ if $i\geq0$. This settles the range $-\lH+1\leq j\leq0$.

\medskip

Next assume that $j\leq-\lH$. Recall from Remark~\ref{rem:CLB} that the canonical line bundle of $\P(\CE)$ is $K_{\P(\CE)}=(-\lH-1-e)h-(\lV+1)H$ where $\lV+1=\rk\CE$ and $\CO(e)=\det(\CE)$. Hence
\[
K_{X_\lH}=(-\lH,-\lH)
\]
in the natural coordinates on $X_\lH$. Serre duality thus implies
\begin{align*}
H^\bullet(X_\lH,\CL)&=H^\bullet\big(X_\lH,(i,j)\big)\\
&=H^{2\lH-1-\bullet}(X_\lH,\CL^\vee\otimes K_{X_\lH})^\vee=H^{2\lH-1-\bullet}\big(X_\lH,(-i-\lH,-j-\lH)\big)^\vee.
\end{align*}
Since $-j-\lH\geq0$ we are reduced to the case of positive $j$.

\medskip

We therefore assume that $j\geq1$ and consider the dual Euler sequence
\[
0\to\CO\to\CO(1)^{\oplus(\lH+1)}\to\CT_{\P^\lH}\to0
\]
on $\P^\lH$. Taking the $j$-th symmetric power yields
\[
0\to\CO\otimes\Sym^{j-1}(\CO(1)^{\oplus(\lH+1)})\to\Sym^j(\CO(1)^{\oplus(\lH+1)})\to\Sym^j\CT_{\P^\lH}\to0
\]
and consequently
\begin{equation}
\label{eq:SymCT}
0\to\CO(j-1)^{\oplus{j+\lH-1\choose j-1}}\to\CO(j)^{\oplus{j+\lH\choose j}}  \to\Sym^j\CT_{\P^\lH}\to0.
\end{equation}
By Proposition~\ref{prop:CohomPBundleP}, $\pi_*(\CO(H))=\CT_{\P^\lH}(-1)$ whence
\[
\pi_*\CL=\pi_*(i,j)=\CO(i)\otimes\Sym^j(\CT_{\P^\lH}(-1))=\CO(i-j)\otimes\Sym^j\CT_{\P^\lH}
\]
and thus
\begin{equation}
\label{eq:piL}
0\to\CO(i-1)^{\oplus{j+\lH-1\choose \lH}}\to\CO(i)^{\oplus{j+\lH\choose \lH}}\to\pi_*\CL\to0.
\end{equation}
In particular, the Euler characteristic of $\CL=(i,j)$ is
\begin{equation}
\label{eq:chi}
\begin{split}
\chi(X_\lH,\CL)
&=\chi(\P^\lH,\pi_*\CL)=\chi(\P^\lH,\CO(i))\cdot{j+\lH\choose \lH}-\chi(\P^\lH,\CO(i-1))\cdot{j+\lH-1\choose \lH}\\
&=\binom{i+\lH}{\lH} \binom{j+\lH}{\lH} - \binom{i+\lH-1}{\lH} \binom{j+\lH-1}{\lH}
\end{split}
\end{equation}
We are now in a position to determine the loci of non-vanishing cohomology and distinguish three cases.

\smallskip

{\em Case $i\geq0$.}
As ${j+\lH\choose\lH}>0$, the sheaf in the middle of~\eqref{eq:piL} has nontrivial global sections, and so does therefore $\CL=(i,j)$.

\smallskip

{\em Case $-\lH+1\leq i\leq-1$.}
Here, $(i,j)$ is immaculate for so are the other sheaves in~\eqref{eq:piL}.

\smallskip

{\em Case $i\leq-\lH$.}
The long exact sequence of cohomology reads
\begin{align*}
0\to H^{\lH-1}(X_\lH,\CL) 
&\to H^\lH(\P^\lH,\CO(i-1))^{\oplus{j+\lH-1\choose \lH}}\to \\
&\to H^\lH(\P^\lH,\CO(i))^{\oplus{j+\lH\choose \lH}}
\to H^\lH(X_\lH,\CL)\to0.
\end{align*}
Since for any nonnegative integer $k$ and real $\alpha$, we have
\[
\alpha {\alpha-1 \choose k}=\frac{\alpha(\alpha-1)\cdots(\alpha-k+1)(\alpha-k)}{k!} = (\alpha-k)\cdot{\alpha\choose k}.
\]
and so
\[
\chi(X_\lH,\CL)=\binom{i+\lH-1}{\lH} \binom{j+\lH}{\lH} \left(\frac{i+\lH}i - \frac{j}{j+\lH} \right),
\]
cf.\ Equation~\eqref{eq:chi}. The sign of the first two factors depends only on $\lH$, whereas the sign of the third factor flips at the line
\[
i+j=-\lH,
\]
where the line bundles are immaculate. From here, all the statements easily follow.
\end{proof}

\begin{remark}
\label{rem:POmegaNatural}
We note that the natural coordinates on $\Pic(X_\lH)$ are actually nef. Indeed,
\[ 
\pi_*\CO(H)=\CT_{\P^\lH}(-1)=\CO^{\oplus(\lH+1)}\!/\CO(-1) 
\]
is nef as a quotient of a nef bundle, and therefore stays nef when pulled back to $X_\lH$~\cite[Thm II.6.2.12]{lazarsfeld-bundles}. Hence $H$ is nef so that $\Nef(X)=\Eff(X)=\cone{(1,0),(0,1)}$ by Remark~\ref{rem:EffNef}.
\end{remark}

To compute the immaculate locus of $X_\lH^\vee=\P(\Tang_{\P^\lH}(-2))$
we can proceed similarly and obtain the following theorem.

\begin{theorem}
\label{thm:cohomPTangL}
Let $\CL=(i,j)$ be a line bundle on $X_\lH^\vee=\P(\CT_{\P^\lH}(-2))\to\P^\lH$ given with respect to the natural coordinates of $\CT_{\P^\lH}(-2)$. For $j\geq 0$ we have
\begin{itemize}
\item $\CL\in\CH^0(X_\lH^\vee)\iff(i,j)\in\cone{(1,0),(1,0)}$

\smallskip

\item $\CL\in\CH^1(X_\lH^\vee)\iff(i,j)\in(-2,1)+\cone{(0,1),(-1,1)}$

\smallskip

\item $\CL\in\CH^\lH(X_\lH^\vee)\iff(i,j)\in(-\lH-1,0)+\cone{(-1,1),(-1,0)}$
\end{itemize}
and for $j<0$
\begin{itemize}
\item $\CL\in\CH^{\lH-1}(X_\lH^\vee)\iff(i,j)\in(\lH-1,-\lH)+\cone{(1,0),(1,-1)}$

\smallskip

\item $\CL\in\CH^{2\lH-2}(X_\lH^\vee)\iff(i,j)\in(0,-\lH-1)+\cone{(1,-1),(0,-1)}$

\smallskip

\item $\CL\in\CH^{2\lH-1}(X_\lH^\vee)\iff(i,j)\in(-\lH+1,-\lH)+\cone{(-1,0),(0,-1)}$;
\end{itemize}
any other $\CH^k(X_\lH)$ is empty. In particular, at most one cohomology group is nontrivial. Furthermore, the Euler characteristic of $\CL=(i,j)$ is equal to
\[
\chi(X_\lH^\vee,\CL)=
{i+j+\lH\choose\lH}{j+\lH\choose\lH}-{i+j+1+\lH\choose\lH}{j+\lH-1\choose\lH} .
\]
\end{theorem}

\begin{proof}
Since $X_\lH^\vee\cong\Flag(1,2,\lH+1)$ we can again appeal to the theorem of Borel-Bott-Weil and deduce that at most one cohomology group $H^k(X_\lH^\vee,\CL)$ is non-trivial. Now for $j\geq0$ we have
\[
\pi_*\CL=\CO(i)\otimes\Sym^j(\Omega(2))=\CO(i+2j)\otimes\Sym^j\Omega
\]
on $\P^\lH$. This fits into the sequence
\[
0\to\Sym^j\Omega\to\CO(-j)^{\oplus{j+\lH\choose j}}\to\CO(1-j)^{\oplus{j+\lH-1\choose j-1}}\to0
\]
obtained by dualising~\eqref{eq:SymCT} if we tensor with $\CO(i+2j)$, that is,
\[
0\to\pi_*\CL\to\CO(i+j)^{\oplus{j+\lH\choose\lH}}\to\CO(i+j+1)^{\oplus{j+\lH-1\choose\lH}}\to0.
\]
Consequently, the Euler characteristic of $\CL$ is
\[
\chi(X_\lH^\vee,\CL) = \chi(\P^\lH,\pi_* \CL) = 
{i+j+\lH \choose \lH} {j+\lH \choose \lH} - {i+j+1+\lH \choose \lH} {j-1+\lH \choose \lH} .
\]
The result then follows as for Theorem~\ref{thm:CohomPOmega}.
\end{proof}

See Figure~\ref{fig:immaculatePTangL} for a schematic sketch of the immaculate locus.

\begin{figure}[ht]
\begin{tikzpicture}[scale=0.4]
\draw[color=gray!40] (-7.3,-6.3) grid (4.3,4.3);
\fill[pattern=north west lines, pattern color=oliwkowy!60] (0,0) -- (0,4.3)-- (4.3,4.3) -- (4.3,0);
\fill[pattern=north east lines, pattern color=yellow!40] (0,0) -- (0,4.3)-- (4.3,4.3) -- (4.3,0);
\draw[color=black,inner sep=0] (5,2) node[fill=white] {$\scriptstyle\Eff(X)=\mr{Nef}(X)$};
\draw[->,color=black] (0,0) -- (5.3,0) node[right]{$\scriptstyle h$};
\draw[->,color=black] (0,0) -- (0,5.3) node[above]{$\scriptstyle H$};
\draw[very thick, color=oliwkowy!60] (0,0) -- (4.3,0);
\draw[very thick, color=oliwkowy!60] (0,0) -- (0,4.3);
\fill[thick, color=black] (0,0) circle (3pt) node[below]{$\scriptstyle\CO$};
\draw[very thick, color=magenta] (-7.3,-1) -- (-2,-1) (-1,-1) -- (4.3,-1);
\draw[very thick, color=magenta] (-7.3,-2) -- (-1,-2) (0,-2) -- (4.3,-2);
\draw[very thick, color=magenta] (-7.3,4.3) -- (-2,-1) (-6.3,4.3) -- (-1,-1);
\draw[very thick, color=magenta] (-1,-2) -- (3.3,-6.3) (0,-2) -- (4.3,-6.3);
\draw[very thick, color=magenta] (-1,4.3) -- (-1,-1) (-1,-2) -- (-1,-6.3);
\fill[pattern=north west lines, pattern color=magenta!40] (-7.3,-2) rectangle (4.3,-1);
\fill[pattern=north west lines, pattern color=magenta!40] (-7.3,4.3) -- (-6.3,4.3) -- (4.3,-6.3) -- (3.3,-6.3);
\fill[pattern=vertical lines, pattern color=oliwkowy!60] (-2,1) -- (-5.3,4.3)-- (-2,4.3) -- cycle;
\draw[very thick, color=oliwkowy!60] (-2,1) -- (-5.3,4.3);
\draw[very thick, color=oliwkowy!60] (-2,1) -- (-2,4.3);
\draw[color=black,inner sep=0] (-3.5,4.5) node[fill=white] {$\scriptstyle\CH^1$};
\fill[pattern=north east lines, pattern color=oliwkowy!60] (-4,0) -- (-7.3,3.3)-- (-7.3,0) -- cycle;
\draw[very thick, color=oliwkowy!60] (-4,0) -- (-7.3,3.3);
\draw[very thick, color=oliwkowy!60] (-4,0) -- (-7.3,0);
\draw[color=black,inner sep=0] (-7.5,1.5) node[fill=white] {$\scriptstyle\CH^\lH$};
\fill[pattern=north east lines, pattern color=oliwkowy!60] (-7.3,-3) -- (-2,-3)-- (-2,-6.3) -- (-7.3,-6.3);
\draw[very thick, color=oliwkowy!60] (-2,-3) -- (-7.3,-3);
\draw[very thick, color=oliwkowy!60] (-2,-3) -- (-2,-6.3);
\fill[thick, color=black] (-2,-3) circle (3pt) node[above left]{$\scriptstyle K_X$};
\draw[color=black,inner sep=0] (-7.9,-4.5) node[fill=white] {$\scriptstyle \CH^{2\lH-1}$};
\fill[pattern=vertical lines, pattern color=oliwkowy!60] (0,-6.3) -- (0,-4)-- (2.3,-6.3) -- cycle;
\draw[very thick, color=oliwkowy!60] (0,-4) -- (0,-6.3);
\draw[very thick, color=oliwkowy!60] (0,-4) -- (2.3,-6.3);
\draw[color=black,inner sep=0] (1.75,-6.5) node[fill=white] {$\scriptstyle \CH^{2\lH-2}$};
\fill[pattern=north west lines, pattern color=oliwkowy!60] (4.3,-3) -- (2,-3)-- (4.3,-5.3) -- cycle;
\draw[very thick, color=oliwkowy!60] (2,-3) -- (4.3,-3);
\draw[very thick, color=oliwkowy!60] (2,-3) -- (4.3,-5.3);
\draw[color=black,inner sep=0] (5.1,-4.5) node[fill=white] {$\scriptstyle\CH^{\lH-1}$};
\end{tikzpicture}
\caption{The immaculate locus of $X_\lH^\vee=\P(\Tang_{\P^\lH}(-2))$ (drawn in magenta for $\lH=3$).}
\label{fig:immaculatePTangL}
\end{figure}

\begin{remark}
Recall that under the identification
\[
\P(\Omega_{\P^\lH}(1))\cong\P(\Omega_{\P^\lH})\cong\Flag(1,\lH,\lH+1),  
\]
the projection $X_\lH\to\P^\lH$ becomes the natural map $\Flag(1,\lH,\lH+1)\to\Flag(1,\lH+1)=\P^\lH$. For $\lH>2$ we get the diagram
\[
\begin{tikzcd}[column sep=-0.55ex]
&& \Flag(1,2,\lH,\lH+1)\ar[lldd, "p"'] \ar[rrdd, "q"] &&\\
&& V_1 \subset V_2 \subset V_\lH \subset \kk^{\lH+1} \ar[u, phantom, "\rotatebox{90}{$\in$}" description] \ar[ld, mapsto] \ar[rd, mapsto]\\
\P\big(\Tang_{\P^\lH}(-2)\big) \ar[dd] \ar[rrdd] & V_1\!\subset\!V_2\!\subset \kk^{\lH+1} \ar[l, phantom, "\ni" description] & & V_1\!\subset\!V_{\lH}\!\subset \kk^{\lH+1} \ar[r, phantom, "\in" description]&\,  \P\big(\Omega_{\P^\lH}(1)\big) \ar[dd] \ar[lldd] \\
\\
\Gr(2,\lH+1) && \P^\lH  && \check\P^\lH
\end{tikzcd}
\]
where we use $\Gr(2,\lH+1) = \Flag(2,\lH+1)$, $\P^\lH = \Flag(1,\lH+1)$ and $\check\P^\lH = \Flag(\lH,\lH+1)$. Both $p$ and $q$ are projective bundle maps. In particular, 
\[
p^*\colon\CD(\P(\Omega_{\P^\lH}(1)))\to\CD(\Flag(1,2,\lH,\lH+1))
\]
and
\[
q^*\colon\CD(\P(\Tang_{\P^\lH}(-2)))\to\CD(\Flag(1,2,\lH,\lH+1))
\]
are fully faithful. This implies that the (im)maculate loci of $\Pic(X_\lH)$ and $\Pic(X^\vee_\lH)$ can be obtained as hyperplanes of $\Pic(\Flag(1,2,\lH,\lH+1))$. 
Regarding $\Flag(1,2,\lH,\lH+1)$ as a projective bundle over $\Gr(2,\lH+1)$, $\P^\lH$ and $\check\P^\lH$ respectively, this accounts for the three immaculate strips which occur in $\Pic(X_\lH)$ and $\Pic(X^\vee_\lH)$.
\end{remark}

\subsection{The associated lattice}
It is useful to reduce $\Pic(X)$ further by taking the quotient with an {\em admissible} sublattice $\Lambda$, that is,
\[
\Lambda\cap(\Imm(X)\cup-\Imm(X))=\varnothing.
\]
This is motivated by the following

\begin{lemma}
\label{lem:PicLambda}
If $\Lambda$ is an admissible sublattice of $\Pic(X)$, then for any exceptional set $s$ the composition
\[
\cl\into\Pic(X)\onto\Pic(X)/\Lambda
\]
is injective.  
\end{lemma}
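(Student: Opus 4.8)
The plan is to argue by contradiction, exploiting the well-definedness of the difference map $\delta_\cl$ introduced just before Lemma~\ref{lem:deltaF}. The composition fails to be injective precisely when there are two distinct elements $A\neq B\in\cl$ having the same image in $\Pic(X)/\Lambda$, that is, with $A-B\in\Lambda$. I would fix such a pair and aim to contradict the admissibility hypothesis $\Lambda\cap(\Imm(X)\cup-\Imm(X))=\varnothing$.

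First I would invoke that $\cl$ is exceptional: for the distinct pair $A\neq B$ there exists an exceptional order $\tO$ on $\cl$ with either $A<_\tO B$ or $B<_\tO A$. In either case the backwards $\Ext$-vanishing defining exceptionality forces $\delta_\cl(A,B)=B-A\in\Imm(X)\cup-\Imm(X)$; this is exactly the well-definedness of $\delta_\cl$ as a map into $\Imm(X)\cup\{\CO_X\}\cup-\Imm(X)$, and since $A\neq B$ the trivial value $\CO_X$ is excluded. Next, since $\Lambda$ is a sublattice, hence a subgroup of $\Pic(X)$ closed under negation, the relation $A-B\in\Lambda$ yields $B-A\in\Lambda$ as well. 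Combining the two memberships gives $B-A\in\Lambda\cap(\Imm(X)\cup-\Imm(X))=\varnothing$, the desired contradiction. Hence no such pair exists and the composition is injective.

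I do not anticipate any genuine obstacle: the argument is a direct unwinding of the definition of exceptionality together with the admissibility of $\Lambda$. The only point requiring care is the bookkeeping of signs, namely that distinctness of $A$ and $B$ is precisely what promotes $\delta_\cl(A,B)$ from the trivial class $\CO_X$ to an element of $\Imm(X)\cup-\Imm(X)$, and that one must use closure of $\Lambda$ under negation in order to land in the forbidden intersection.
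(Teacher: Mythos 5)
Your proof is correct and follows exactly the paper's argument: the paper likewise notes that two elements of an exceptional set mapping to the same class would have their difference both in $\Lambda$ and in $\Imm(X)\cup-\Imm(X)$, contradicting admissibility. You merely spell out the two points the paper leaves implicit (well-definedness of $\delta_\cl$ via an exceptional order, and closure of $\Lambda$ under negation), which is fine.
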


\begin{proof}
$\CL$, $\CL'\in s$ are mapped to the same point in $\Pic(X)/\Lambda$ if $\CL_i-\CL_j\in\Lambda$. But $\CL_i-\CL_j\in\Imm(X)\cup-\Imm(X)$, contradiction.
\end{proof}

In the following we let $\phi_\Lambda \colon \Pic(X) \to \Pic(X)/\Lambda$ be the natural projection.

\begin{example}
\label{exam:LatticeTV}
For the toric case, $\Lambda=\langle K_X,\pi^*K_{\P^\lH}\rangle$ is admissible, cf.\ also~\cite[Subsection $($3.4$)$]{fesfano}. In this case, $\Pic(X)/\Lambda$ contains exactly $\rk K_0(X)$ elements, so that an exceptional set of line bundles $\cl$ is maximal if and only if $\phi_\Lambda$ restricted to $\cl$ is a bijection.
\end{example}

However, the lattice in Example~\ref{exam:LatticeTV} is no longer admissible for $X=X_\lH$: $K_X\otimes\pi^*\omega_{\P^\lH}^{-1}$ lies in $\Imm(X)$. On the other hand, $X$ has two projections $\pi_1$ and $\pi_2$ onto $\P^\lH$. Then
\[
\Lambda=\langle\pi_1^*K_{\P^\lH},\pi_2^*K_{\P^\lH}\rangle 
\]
is admissible and formally agrees with the lattice of the product case since $K_{\P^\lH\times\P^\lV}=\pi_1^*K_{\P^\lH}\otimes\pi_2^*K_{\P^\lV}$. In particular, any exceptional set $\cl$ injects into $\Pic(X)/\Lambda\cong\big(\Z/(\lH+1)\Z\big)^2$.

\begin{remark}
\label{rem:otherlattice}
Other choices for admissible lattices are possible, for instance, $\Lambda'=\langle K_{X},\CO_X\big((\lH+2)h\big)\rangle$.
\end{remark}

Unlike the toric case we do not get a bijection since $\rk K_0(X)=\lH(\lH+1)$. There are, however, constraints on the image of $\Phi_\Lambda$. Let 
\[
\Delta:=\{(a,a)\mid a\in\Z/(\lH+1)\Z\}
\]
and consider the ``shifted diagonals'' $p+\Delta$ for $p\in\Pic(X)/\Lambda$.

\begin{proposition}
Let $\cl$ be an exceptional set on $X=X_\lH$. The image $\Phi_\Lambda(\cl)$ consists of at most $\lH(\lH+1)$ points where none of the shifted diagonals $p+\Delta$ contains more than $\lH$ points.

\medskip

Furthermore, for a \meset\ $\cl$ the intersection of $\Phi(\cl)$ with each shifted diagonal in $\Pic(X)/\Lambda$ consists, in this order, of the points
\[
p+(1,1),\; p+(2,2),\ldots,p+(\lH,\lH)
\]
for some uniquely determined {\em gap point $p$}, namely the missing point of the shifted diagonal $p+\Delta$. 
\end{proposition}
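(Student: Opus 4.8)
The plan is to translate the entire statement into the combinatorics of the forced-order relation $\pF(\cl)$ of Section~\ref{sec:ExPosets}. Recall that in natural coordinates $\Lambda=(\lH+1)\Z^2$: one checks $\pi_1^*\CO_{\P^\lH}(1)=h$ and $\pi_2^*\CO_{\check\P^\lH}(1)=H-2h$, so that the two generators $\pi_i^*K$ are $(\lH+1)$ times a unimodular basis of $\Z^2$. Hence $\Phi_\Lambda$ is reduction modulo $\lH+1$, the diagonal $\Delta$ is the image of $\Z\cdot(1,1)$, and the shifted diagonals $p+\Delta$ are the level sets of $j-i$ modulo $\lH+1$. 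Two distinct elements $A,B$ of $\cl$ have the same image on a shifted diagonal precisely when $B-A$ lies in the index-$(\lH+1)$ sublattice $M=\{(\alpha,\beta)\mid\alpha\equiv\beta \bmod (\lH+1)\}$. The first assertion is then immediate: $\Phi_\Lambda|_\cl$ is injective by Lemma~\ref{lem:PicLambda}, so $|\Phi_\Lambda(\cl)|=|\cl|\le\rk K_0(X)=\lH(\lH+1)$.

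The crux is a single local claim: if $B$ is the \emph{diagonal successor} of $A$, i.e.\ $\Phi_\Lambda(B)=\Phi_\Lambda(A)+(1,1)$, then $A\le_{\pF}B$. To prove it I would write $B-A=(1+(\lH+1)s,\,1+(\lH+1)t)$ and use that $B-A\in\Imm(X)\cup-\Imm(X)$, since $A,B$ are distinct members of an exceptional set. Running through the strips $\mathcal H,\mathcal V,\mathcal D$ of the corollary to Theorem~\ref{thm:cohomPOmega}, the only possibilities turn out to be $t=0$ (a point of $-\mathcal H$), $s=0$ (a point of $-\mathcal V$) and---only when $\lH=2$---$s+t=0$ (a point of $-\mathcal D$); in all three cases $B-A\in-\Imm(X)\setminus\Imm(X)$, so $(A,B)\in\pF(\cl)$ and $A$ precedes $B$ in \emph{every} exceptional order. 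This short case check is the one genuinely computational step. Its content is that the forcing is \emph{independent of the lift}: although an individual difference in $M$ may well be immaculate in both directions, the horizontal and vertical immaculate bands have width only $\lH-1$, and the only value $\equiv 1\bmod(\lH+1)$ that is $\le -1$ is at most $-\lH$, which already undershoots the band $[-(\lH-1),-1]$; so no ``backwards'' immaculate step of size $(1,1)$ exists.

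With this local claim the remaining assertions are forced. Suppose some shifted diagonal contained all $\lH+1$ of its lattice points as elements $P_0,\dots,P_\lH$ of $\cl$, indexed by diagonal position modulo $\lH+1$. The successor relations then give $P_0\le_{\pF}P_1\le_{\pF}\cdots\le_{\pF}P_\lH\le_{\pF}P_0$, a directed cycle; but $\pF(\cl)\subseteq\pO(\cl)$ and $\pO(\cl)$ is a partial order by construction, so no such cycle can occur. Hence each shifted diagonal carries at most $\lH$ points of $\Phi_\Lambda(\cl)$. For a maximal $\cl$ one has $|\cl|=\lH(\lH+1)$ points spread over the $\lH+1$ shifted diagonals with at most $\lH$ on each, so pigeonhole forces exactly $\lH$ per diagonal; the single omitted point is the uniquely determined gap point $p$, the present points are $p+(1,1),\dots,p+(\lH,\lH)$, and the successor edges $p+(k,k)\le_{\pF}p+(k+1,k+1)$ form a Hamiltonian chain that pins down their order. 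I expect the main obstacle to be precisely the local claim of the second paragraph: getting the lift-independence exactly right and correctly isolating the exceptional behaviour at $\lH=2$, since it is this acyclicity of $\pF$---and not mere pairwise comparability of the line bundles---that drives both the $\le\lH$ bound and the gap-point normal form.
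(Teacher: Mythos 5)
Your proposal is correct and takes essentially the same route as the paper: both rest on the single key fact that a difference congruent to $\Phi_\Lambda(K_X)=(1,1)$ modulo $\Lambda$ can never be immaculate, so diagonal successors are forced in every exceptional order, after which the periodicity/cycle contradiction bounds each shifted diagonal by $\lH$ points and injectivity plus pigeonhole gives the gap-point structure for maximal sets. Your explicit strip-by-strip check (including the $\lH=2$ anti-diagonal case, which lands in $-\Imm(X)$ and hence is harmless) simply supplies the computation behind the paper's unproved assertion that $\Phi_\Lambda(\Imm(X_\lH))$ misses $\Phi_\Lambda(\CO_X)$ and $\Phi_\Lambda(K_X)$.
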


\begin{proof}
The set $\Phi_\Lambda(\Imm(X_\lH))$ equals $\Pic(X)/\Lambda\setminus\{\Phi(\CO_X),\,\Phi(K_X)\}$. Thus, if $\CL_i$, $\CL_j\in\cl$ are elements of an exceptional sequence with $\pi(\CL_j)=\pi(\CL_i)+(1,1)$, then necessarily $i<j$.

\medskip

Consequently, a shifted diagonal $p+\Delta$ cannot entirely belong to an exceptional sequence $\cl$ since it is periodic. Therefore, we necessarily have gaps; for a \meset\ $\cl$, there cannot be more than one. 

\medskip

Let then $p\in\Pic(X)/\Lambda$ be this unique $\cl$-gap in its own shifted diagonal $p+\Delta$. The remaining elements are
\[
p+(1,1),\;p+(2,2),\ldots,p+(\lH,\lH)
\]
which belong to the image of $\cl$ under $\Phi$, and any element in $\Phi^{-1}_\Lambda\big(p+\nu\cdot(1,1)\big)\cap\cl$ necessarily precedes any element in $\Phi^{-1}_\Lambda\big(p+(\nu+1)\cdot(1,1)\big)$.
\end{proof}

\begin{remark}
The proposition can be also regarded as a consequence of the admissibility of $\Lambda'$ in Remark~\ref{rem:otherlattice}.
\end{remark}

\subsection{Maximal exceptional sequences on $X_2=\P(\Omega_{\P^2}(1))$}
\label{sec:thmAforX2}
As discussed in Subsection~\eqref{subsec:ImmLocXl}, we can subdivide the immaculate locus into four parts: The common intersection of the (anti-)diagonal and the horizontal and vertical line; and the diagonal / horizontal / vertical line minus that common intersection. Inside the quotient $\Pic(X_2)/\Lambda$ this is reflected by the disjoint union of equivalence classes 
\begin{itemize}
\item $\{(2,0),\,(0,2)\}$ coming only from points on the diagonal
\item $\{(0,1),\,(2,1)\}$ coming only from points in the horizontal line 
\item $\{(1,0),\,(1,2)\}$ coming from points in the vertical line
\item $\{(1,1)\}$ coming from the intersection of all three parts.
\end{itemize}

\medskip

We use colours for the three basic building blocks of the immaculate locus: Blue represents the diagonal, red the horizontal and green the vertical strip, see Figure~\ref{fig:colNegImmLocFlags} for illustration. We refer to a point in a \meset\ as a blue, green or red point if its image in $\Pic(X_2)/\Lambda$ is a point of corresponding colour. As the image of a \meset\ consists of six points, at least one colour occurs necessarily twice.

\begin{figure}[htpb]
\definecolor{colorI}{RGB}{210,40,90}
\definecolor{colorJ}{RGB}{10,90,50}
\newcommand{\spaceA}{\hspace*{0.3em}}
\newcommand{\links}{-1.3}
\newcommand{\rechts}{3.3}
\newcommand{\unten}{-1.3}
\newcommand{\oben}{3.3}
\hspace*{\fill}
\begin{tikzpicture}[scale=0.45]
\draw[color=oliwkowy!40] (\links,\unten) grid (\rechts,\oben);
\fill[thick, color=black]
  (0,0) circle (4pt) node[below]{$\scriptstyle\CO_{X_2}$};
\draw[ultra thick, color=darkgreen]
  (1,3) -- (1,-1);
\draw[ultra thick, color=red]
  (3,1) -- (-1,1);
\draw[ultra thick, color=blue]
  (-1,3) -- (3,-1);
\fill[color=white]
  (1,1) circle (3pt);
\draw[thick, color=black]
  (1,1) circle (4pt);
\end{tikzpicture}
\hspace*{\fill}
\begin{tikzpicture}[scale=0.55]
\draw[color=oliwkowy!40] (\links+1,\unten+1) grid (\rechts-1,\oben-1);
\draw[thick, color=black]
  (1,1) circle (4pt);
\fill[thick, color=black]
  (0,0) circle (4pt);
\fill[color=darkgreen]
  (1,0) circle (4pt) 
  (1,2) circle (4pt);
\fill[color=red]
  (0,1) circle (4pt) 
  (2,1) circle (4pt);
\fill[color=blue]
  (2,0) circle (4pt) (0,2) circle (4pt);
\fill[thick, color=blue!20]
  (-1,0) circle (4pt) (3,2) circle (4pt);
\draw[thick, color=blue]
  (-1,0) circle (4pt) (3,2) circle (4pt);
\draw[thick, color=red]
  (-1.4,1) node{$\scriptstyle(0,1)$} (3.4,1) node{$\scriptstyle(2,1)$};
\draw[thick, color=blue]
  (-1.4,2) node{$\scriptstyle(0,2)$} (3.4,0) node{$\scriptstyle(2,0)$} 
  (4.2,2) node{$\scriptstyle(0,2)$} (-2.2,0) node{$\scriptstyle(2,0)$};
\draw[thick, color=darkgreen]
  (1,3) node{$\scriptstyle(1,2)$};
\draw[thin, color=black]
  (-0.5,-0.5) -- (2.5,-0.5) -- (2.5,2.5) -- (-0.5,2.5) -- cycle;
\draw[thin, color=intOrange]
  (0.2,0.2) -- (0.8,0.8) (1.2,1.2) -- (1.9,1.9) (2.1,2.1) -- (2.65,2.65)
  (-0.8,0.2) -- (-0.2,0.8) (0.2,1.2) -- (0.8,1.8)
  (1.2,0.2) -- (1.8,0.8) (2.2,1.2) -- (2.8,1.8)
  (3,3) node{$\scriptstyle\Delta$};
\end{tikzpicture}
\hspace*{\fill}
\caption{The coloured immaculate loci $\nImm(X_2)\subseteq\Z^2$ 
and $\Phi_\Lambda(\nImm)\subseteq(\Z/3\Z)^2$
}
\label{fig:colNegImmLocFlags}
\end{figure}

\begin{theorem}
Up to helixing and changing the order of mutually orthogonal elements, any \mes\ is either a vertically or horizontally lex ordered \mes.
\end{theorem}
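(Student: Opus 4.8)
The plan is to read off the statement from the geometry of $\Imm(X_2)$ together with the combinatorics of the quotient $\Phi_\Lambda\colon\Pic(X_2)\onto\Pic(X_2)/\Lambda\cong(\Z/3\Z)^2$. For $\lH=2$, Theorem~\ref{thm:cohomPOmega} gives that $\Imm(X_2)$ is the union of the three lines $\{i=-1\}$, $\{j=-1\}$ and $\{i+j=-2\}$; hence $\cl$ is exceptional exactly when every difference of two of its elements lies on one of the six lines $\{i=\pm1\}$, $\{j=\pm1\}$, $\{i+j=\pm2\}$, and a pair is mutually orthogonal precisely when its difference lies in the finite set $\Imm(X_2)\cap\nImm(X_2)$ of pairwise intersections of these lines. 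By the preceding proposition, for a \mes\ $\cl$ the image $\Phi_\Lambda(\cl)$ meets each $\Delta$-coset in the two points $p+(1,1),\,p+(2,2)$ (in this forced order), the gap $p$ being omitted. Colouring the occupied residues blue, red, green as in Figure~\ref{fig:colNegImmLocFlags}, the four elements coming from the two fully coloured cosets force, by pigeonhole, at least one repeated colour; this repetition will fix the dominant direction of the sequence.

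First I would record the decisive sign observation: if $P$ precedes $Q$ then $Q-P\in\nImm(X_2)$, so the relation is red ($\Delta j=1$), green ($\Delta i=1$) or blue ($\Delta i+\Delta j=2$). Red relations are consistent with the vertical lexicographic order and green relations with the horizontal one; crucially, a blue relation has $\Delta i+\Delta j=2>0$, so $\Delta i>0$ or $\Delta j>0$, and it is individually consistent with the horizontal or with the vertical order. Whenever a forced relation points the wrong way for a chosen global order, either its difference lies in $\Imm(X_2)\cap\nImm(X_2)$, so the two elements are mutually orthogonal and may be transposed, or the complementary lex order accommodates it. The theorem is thereby reduced to showing that, after helixing, a \emph{single} global lex order, vertical or horizontal, can be realised for the whole sequence up to such transpositions.

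To limit the casework I would normalise by helixing. Since $K_X=(-2,-2)\equiv(1,1)\bmod\Lambda$, one application of $\helixL$ or $\helixR$ moves a single extreme element within its own coset by $(2,2)$, cyclically permuting its colour and advancing its coset's gap, while cyclically reordering the sequence. Combined with the mutually-orthogonal transpositions and the vertical reordering already used in the toric case, this lets me bring the three gaps into a standard configuration and so reduce the a priori $3^3$ gap-choices to a short canonical list. In each canonical configuration a global vertical or horizontal lex order is then exhibited, the extreme prototype being two adjacent Beilinson chains, i.e.\ a vertically lex ordered \mes\ of Orlov type; the $i\leftrightarrow j$ symmetry coming from the two projections $\pi_1,\pi_2$ (equivalently from the passage to $X_2^\vee=\P(\Tang_{\P^2}(2))$) interchanges the two lex orders and halves the work.

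The hard part is the global consistency in this last step: ruling out ``mixed'' sequences in which one forced relation genuinely requires the vertical order while another genuinely requires the horizontal order, with no orthogonal transposition available to reconcile them. This is where the finiteness extracted from $(\Z/3\Z)^2$ is indispensable, for after helixing-normalisation only finitely many colour types survive, indexed by the repeated colour and the gap positions, and one settles them one by one. The decisive input is that the repeated colour pins the chain direction, while the blue relations, having positive coordinate sum $\Delta i+\Delta j=2$, can always be absorbed into that direction or undone by an orthogonal swap, so that they never force an irreducible conflict. I expect the configurations mixing blue relations with the minority colour to demand the most careful bookkeeping, and to constitute the main obstacle.
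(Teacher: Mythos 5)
Your framework is exactly the paper's --- the three immaculate lines for $\lH=2$, the colouring of $(\Z/3\Z)^2$, the gap-point proposition, helixing as normalisation --- but what you have written is a plan, not a proof. The two claims carrying all the weight, namely that the repeated colour ``pins the chain direction'' and that blue relations ``can always be absorbed \ldots{} or undone by an orthogonal swap, so that they never force an irreducible conflict'', are precisely what must be established, and you give no argument for either; you even defer the mixed blue/minority-colour configurations as an expected ``main obstacle''. The second claim moreover misdescribes the actual mechanism: in the paper's Case 1, where a consecutive backward step is $\cl_2-\cl_1=(k+2,-k)$ with $k\geq2$ (a blue relation violating vertical order), the conflict is resolved neither by absorption nor by a swap --- rather, colour counting in $(\Z/3\Z)^2$ shows the configuration cannot be completed to a \meset\ at all: having a repeated colour among the six possible successors forces $k=2$ and yields two red points, which then exclude every green point.

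Concretely, the missing step is the enumeration pivot on which the paper's proof runs. If $\cl$ is not vertically lex ordered, then after transposing mutually orthogonal pairs some consecutive difference lies in $-\Imm\setminus(\Imm\cap-\Imm)$ with negative second coordinate, hence equals $(k+2,-k)$ with $k\geq2$, or $(1,-k)$ with $k\geq4$, or $(1,-2)$; helixing and twisting places this pair at $\cl_1=\CO_X$, and then the finite successor sets $\mr{Suc}(\cl_2)$ (six explicit lattice points in each case) together with the colour constraints either produce a contradiction (Case 1) or force the remaining points, after which each surviving sequence is checked by hand to be horizontally ordered, possibly after further helixing. Your alternative normalisation --- bringing ``the three gaps into a standard configuration'' and settling the resulting colour types ``one by one'' --- is never executed: no canonical list of configurations is produced and none is settled, so the theorem remains unproved in your write-up.
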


\begin{proof}
Assume that $\cl=(\cl_1,\ldots,\cl_6)$, $\cl_i=(a_i,b_i)$, is a \mes\ which is not vertically lex ordered. Then there exists $1\leq i\leq5$ with $b_{i+1}-b_i<0$. If $\cl_{i+1}-\cl_i\in-\Imm\cap\Imm$, then mutation just exchanges $\cl_i$ and $\cl_{i+1}$ in the sequence. Therefore, there must exist $1\leq i\leq5$ such that either 
\begin{enumerate}
\item $\cl_{i+1}-\cl_i=(k+2,-k)$, $k\geq2$;
\item $\cl_{i+1}-\cl_i=(1,-k)$, $k\geq4$;
\item $\cl_{i+1}-\cl_i=(1,-2)$,
\end{enumerate}
cf.\ Figure~\ref{fig:AllowImm}. Helixing and normalising if necessary we may assume that $i=1$ and $\cl_1=\CO_X$.

\begin{figure}[ht]
\begin{tikzpicture}[scale=0.4]
\draw[color=gray!40] (-4.3,-4.3) grid (4.3,4.3);
\draw[color=darkgreen, very thick] (1,4.3)--(1,-4.3);
\draw[color=red, very thick] (-4.3,1)--(4.3,1);
\draw[color=blue, very thick] (-2.3,4.3)--(4.3,-2.3);
\fill[color=black] (-1,1) circle (5pt) (-3,1) circle (5pt) (1,-1) circle (5pt) (1,-3) circle (5pt) (-1,3) circle (5pt) (3,-1) circle (5pt);
\draw[color=black] (0,0) circle (5pt) node[below]{$\scriptstyle\CO_X$};
\end{tikzpicture}
\caption{The set $-\Imm\setminus(-\Imm\cap\Imm)$. The points in $-\Imm\cap\Imm$ are displayed in black.}
\label{fig:AllowImm}
\end{figure}

\medskip

{\sc Case 1.} Assume that $\cl_2=(2+k,-k)$, $k\geq2$. The set of possible successors is given by
\[
\mr{Suc}(\cl_2)=\{(1,3),\,(1,1-k),\,(3,1),\,(3+k,1),\,(1+k,1-k),\,(3+k,-1-k)\}. 
\]
Existence of at least one pair of coloured points requires $k=2$, see Figure~\ref{fig:SucBlueLine} in which case we have two red points. However, these exclude any green point, contradiction. Hence Case 1 does not occur.

\begin{figure}[ht]
\begin{tikzpicture}[scale=0.4]
\draw[color=gray!40] (-1.3,-4.3) grid (6.3,3.3);
\draw[color=darkgreen, thick] (1,3.3)--(1,-4.3);
\draw[color=red, thick] (-1.3,1)--(6.3,1);
\draw[color=blue, thick] (-1.3,3.3)--(6.3,-4.3);
\fill[color=black] (1,-1) circle (5pt) (5,-3) circle (5pt) (3,-1) circle (5pt);
\fill[color=black] (5,1) circle (5pt) (1,3) circle (5pt) (3,1) circle (5pt);
\node[shape=circle,draw,fill=white,inner sep=1pt] at (0,0) {$\scriptstyle1$};
\node[shape=circle,draw,fill=white,inner sep=1pt] at (4,-2) {$\scriptstyle2$};
\end{tikzpicture}
\caption{The set of successors $\mr{Suc}(4,-2)$ (black points).}
\label{fig:SucBlueLine}
\end{figure}

\bigskip

{\sc Case 2.} Assume that $\cl_2=(1,-k)$, $k\geq4$. The set of possible successors is given by
\[
\mr{Suc}(\cl_2)=\{(1,1-k),\,(1,2-k),\,(2-k,1),\,(2,1),\,(2,0),\,(1+k,1-k)\}, 
\]
see Figure~\ref{fig:SucGreenLine}.

\begin{figure}[ht]
\begin{tikzpicture}[scale=0.4]
\draw[color=gray!40] (-2.3,-4.3) grid (6.3,3.3);
\draw[color=darkgreen, thick] (1,3.3)--(1,-4.3);
\draw[color=red, thick] (-2.3,1)--(6.3,1);
\draw[color=blue, thick] (-1.3,3.3)--(6.3,-4.3);
\fill[color=black] (1,-2) circle (5pt) (1,-3) circle (5pt) (-2,1) circle (5pt);
\fill[color=black] (5,-3) circle (5pt) (2,0) circle (5pt) (2,1) circle (5pt);
\node[shape=circle,draw,fill=white,inner sep=1pt] at (0,0) {$\scriptstyle1$};
\node[shape=circle,draw,fill=white,inner sep=1pt] at (1,-4) {$\scriptstyle2$};
\end{tikzpicture}
\caption{The set of successors $\mr{Suc}(1,-4)$ (black points).}
\label{fig:SucGreenLine}
\end{figure}
Here, we cannot have a pair of red or blue points. Therefore, we must have two green points which results in the sequences
\[
\begin{tikzpicture}[scale=0.4]
\draw[color=gray!40] (-0.3,-4.3) grid (2.3,1.3);
\draw[<->] (1,-1) -- (1,-5);
\drawExSec{0,0;1,-4;1,-3;1,-2;2,0;2,1};
\end{tikzpicture}
\]
These are horizontally ordered.

\bigskip

{\sc Case 3.} Finally, assume that $\cl_2=(1,-2)$. The set of possible successors is given by
\[
\mr{Suc}(1,-2)=\{(1,-1),\,(1,0),\,(0,1),\,(2,1),\,(2,0),\,(3,-1)\}, 
\]
cf.\ Figure~\ref{fig:Suc2-1}. Note that a possible blue point on $(3,-1)$ cannot occur in a sequence with either the red point $(0,1)$ or the green point $(1,0)$ and the red point $(2,1)$. On the other hand, a red point on $(0,1)$ prevents a blue point on $(3,-1)$; together with a green point on $(1,-1)$ it prevents a blue point in $(2,0)$.

\begin{figure}[ht]
\begin{tikzpicture}[scale=0.4]
\draw[color=gray!40] (-1.3,-3.3) grid (4.3,3.3);
\draw[color=darkgreen, thick] (1,3.3)--(1,-3.3);
\draw[color=red, thick] (-1.3,1)--(4.3,1);
\draw[color=blue, thick] (-1.3,3.3)--(4.3,-2.3);
\fill[color=black] (0,1) circle (5pt) (1,0) circle (5pt) (1,-1) circle (5pt);
\fill[color=black] (2,1) circle (5pt) (2,0) circle (5pt) (3,-1) circle (5pt);
\node[shape=circle,draw,fill=white,inner sep=1pt] at (0,0) {$\scriptstyle1$};
\node[shape=circle,draw,fill=white,inner sep=1pt] at (1,-2) {$\scriptstyle2$};
\end{tikzpicture}
\caption{The set of successors $\mr{Suc}(1,-2)$ (black points).}
\label{fig:Suc2-1}
\end{figure}

\medskip

(i) If we have two blue points, then we get the \mes s
\[
\begin{tikzpicture}[scale=0.4]
\draw[color=gray!40] (-0.3,-2.3) grid (3.3,1.3);
\drawExSec{0,0;1,-2;1,-1;2,0;2,1;3,-1};
\end{tikzpicture}
\hspace{20pt}
\begin{tikzpicture}[scale=0.4]
\draw[color=gray!40] (-0.3,-2.3) grid (3.3,1.3);
\drawExSec{0,0;1,-2;1,-1;3,-1;1,0;2,0};
\end{tikzpicture}
\]
The first one is horizontally ordered. Computing the class of \mes s obtained by successively helixing to the right yields a horizontally ordered \mes\ (cf.\ the third sequence of Class (iv) of Corollary~\ref{coro:FullList} below).

\medskip

(ii) If we have two red points, then we get the \mes s
\[
\begin{tikzpicture}[scale=0.4]
\draw[color=gray!40] (-0.3,-2.3) grid (2.3,1.3);
\drawExSec{0,0;1,-2;1,0;2,0;0,1;2,1};
\end{tikzpicture}
\hspace{20pt}
\begin{tikzpicture}[scale=0.4]
\draw[color=gray!40] (-0.3,-2.3) grid (2.3,1.3);
\drawExSec{0,0;1,-2;0,1;1,-1;1,0;2,1};
\end{tikzpicture}
\]
These sequences are equivalent under helixing to the right to a sequence which can be horizontally ordered (namely the third sequence of Class (v) and the first sequence of Class (iii)' below). 

\medskip

(iii) We are left with the case of two green, one blue and one red point which leaves us with the \mes
\[
\begin{tikzpicture}[scale=0.4]
\draw[color=gray!40] (-0.3,-2.3) grid (3.3,1.3);
\drawExSec{0,0;1,-2;1,-1;1,0;2,0;2,1};
\end{tikzpicture}
\]
which is horizontally ordered. This finishes the proof.
\end{proof}

Note that the involution $\sigma$ from Equation~\ref{eq:sigmaInv} on Page~\pageref{eq:sigmaInv} exchanges vertical with horizontal lex order. For a complete classification of \meset s it therefore suffices to determine the \mes s with vertical order. A straightforward, if tedious computation yields the full list of \mes s in Corollary~\ref{coro:FullList}. We subdivide this list into the equivalence classes under helixing (i)-(v). The sequences displayed here are obtained by applying $\helixR$ from the left to the right. The remaining \mes s are obtained by applying $\sigma$ since $K_X$ is fixed under $\sigma$. This yields the corresponding Classes (i)'-(v)'. We do not display those; at any rate (ii)'=(ii), (iv)'=(iv) and (v)'=(v).

\begin{corollary}[Theorem~\thmExSec\ for $X_2$]
\label{coro:FullList}
Up to applying $\sigma$, any \mes\ on $X=\P(\Omega_{\P^2}(1))$ is one of the following:

\medskip

Class (i) (the arrow indicates that we can move the middle line freely to the left or to the right; the fourth point lies by definition on $\CO_X(a,1)$): 
\begin{center}
\begin{tikzpicture}[scale=0.5]
\draw[color=gray!40] (-0.3,-0.3) grid (3.3,1.3);
\draw[<->] (0.4,1) -- (3.6,1);
\drawExSec{0, 0;1, 0;2, 0;1, 1;1 + 1, 1;1 + 2, 1} 
\end{tikzpicture}
\quad
\begin{tikzpicture}[scale=0.5]
\draw[color=gray!40] (-0.3,-0.3) grid (2.3,2.3);
\draw[<->] (0.4-1,1) -- (3.6-1,1);
\drawExSec{0, 0;1, 0;1 - 1, 1;1, 1;1 + 1, 1;1, 2}
\end{tikzpicture}
\quad
\begin{tikzpicture}[scale=0.5]
\draw[color=gray!40] (-1.3,-0.3) grid (1.3,2.3);
\draw[<->] (0.4-2,1) -- (3.6-2,1);
\drawExSec{0, 0;1 - 2, 1;1 - 1, 1;1, 1;0, 2;1, 2}
\draw (5,1) node {$(a=1)$};
\end{tikzpicture}
\end{center}

\medskip

Class (ii):
\begin{center}
\begin{tikzpicture}[scale=0.5]
\draw[color=gray!40] (-0.3,-0.3) grid (1.3,3.3);
\drawExSec{0, 0;0, 1;1, 1;0, 2;1, 2;1, 3}
\end{tikzpicture}
\quad
\begin{tikzpicture}[scale=0.5]
\draw[color=gray!40] (-0.3,-0.3) grid (2.3,2.3);
\drawExSec{0, 0;1, 0;0, 1;1, 1;1, 2;2, 1} 
\end{tikzpicture}
\quad
\begin{tikzpicture}[scale=0.5]
\draw[color=gray!40] (-1.3,-0.3) grid (1.3,2.3);
\drawExSec{0, 0;-1, 1;0, 1;0, 2;1, 1;1, 2}
\end{tikzpicture}
\qquad
\begin{tikzpicture}[scale=0.5]
\draw[color=gray!40] (-0.3,-0.3) grid (3.3,1.3);
\drawExSec{0, 0;1, 0;1, 1;2, 0;2, 1;3, 1} 
\end{tikzpicture}
\quad
\begin{tikzpicture}[scale=0.5]
\draw[color=gray!40] (-0.3,-0.3) grid (2.3,2.3);
\drawExSec{0, 0;0, 1;1, 0;1, 1;2, 1;1, 2} 
\end{tikzpicture}
\quad
\begin{tikzpicture}[scale=0.5]
\draw[color=gray!40] (-0.3,-1.3) grid (2.3,1.3);
\drawExSec{0, 0;1, -1;1, 0;2, 0;1, 1;2, 1}
\end{tikzpicture}
\end{center}

\medskip

Class (iii): 
\begin{center}
\begin{tikzpicture}[scale=0.5]
\draw[color=gray!40] (-2.3,-0.3) grid (1.3,3.3);
\drawExSec{0, 0;-2, 1;-1, 1;0, 2;1, 2;-1, 3}
\end{tikzpicture}
\quad
\begin{tikzpicture}[scale=0.5]
\draw[color=gray!40] (-0.3,-0.3) grid (4.3,2.3);
\drawExSec{0, 0;1, 0;2, 1;3, 1;1, 2;4, 1}
\end{tikzpicture}
\quad
\begin{tikzpicture}[scale=0.5]
\draw[color=gray!40] (-0.3,-0.3) grid (3.3,2.3);
\drawExSec{0, 0;1, 1;2, 1;0, 2;3, 1;1, 2}
\end{tikzpicture}

\bigskip

\begin{tikzpicture}[scale=0.5]
\draw[color=gray!40] (-1.3,-0.3) grid (2.3,1.3);
\drawExSec{0, 0;1, 0;-1, 1;2, 0;0, 1;1, 1}
\end{tikzpicture}
\quad
\begin{tikzpicture}[scale=0.5]
\draw[color=gray!40] (-2.3,-0.3) grid (1.3,2.3);
\drawExSec{0, 0;-2, 1;1, 0;-1, 1;0, 1;1, 2}
\end{tikzpicture}
\quad
\begin{tikzpicture}[scale=0.5]
\draw[color=gray!40] (-0.3,-1.3) grid (4.3,1.3);
\drawExSec{0, 0;3, -1;1, 0;2, 0;3, 1;4, 1}
\end{tikzpicture}
\end{center}

\medskip

Class (iv): 
\begin{center}
\begin{tikzpicture}[scale=0.5]
\draw[color=gray!40] (-0.3,-0.3) grid (2.3,4.3);
\drawExSec{0, 0;0, 1;2, 1;0, 2;1, 2;1, 4}
\end{tikzpicture}
\quad
\begin{tikzpicture}[scale=0.5]
\draw[color=gray!40] (-0.3,-0.3) grid (2.3,3.3);
\drawExSec{0, 0;2, 0;0, 1;1, 1;1, 3;2, 1}
\end{tikzpicture}
\quad
\begin{tikzpicture}[scale=0.5]
\draw[color=gray!40] (-2.3,-0.3) grid (0.3,3.3);
\drawExSec{0, 0;-2, 1;-1, 1;-1, 3;0, 1;0, 2}
\end{tikzpicture}
\quad
\begin{tikzpicture}[scale=0.5]
\draw[color=gray!40] (-0.3,-0.3) grid (4.3,2.3);
\drawExSec{0, 0;1, 0;1, 2;2, 0;2, 1;4, 1}
\end{tikzpicture}
\quad
\begin{tikzpicture}[scale=0.5]
\draw[color=gray!40] (-0.3,-0.3) grid (3.3,2.3);
\drawExSec{0, 0;0, 2;1, 0;1, 1;3, 1;1, 2}
\end{tikzpicture}
\quad
\begin{tikzpicture}[scale=0.5]
\draw[color=gray!40] (-0.3,-2.3) grid (3.3,0.3);
\drawExSec{0, 0;1, -2;1, -1;3, -1;1, 0;2, 0}
\end{tikzpicture}
\end{center}

\medskip

Class (v): 
\begin{center}
\begin{tikzpicture}[scale=0.5]
\draw[color=gray!40] (-1.3,-0.3) grid (1.3,4.3);
\drawExSec{0, 0;0, 2;1, 2;-1, 3;1, 3;1, 4}
\end{tikzpicture}
\quad
\begin{tikzpicture}[scale=0.5]
\draw[color=gray!40] (-1.3,-0.3) grid (2.3,2.3);
\drawExSec{0, 0;1, 0;-1, 1;1, 1;1, 2;2, 0}
\end{tikzpicture}
\quad
\begin{tikzpicture}[scale=0.5]
\draw[color=gray!40] (-2.3,-0.3) grid (1.3,2.3);
\drawExSec{0, 0;-2, 1;0, 1;0, 2;1, 0;1, 2}
\end{tikzpicture}
\quad
\begin{tikzpicture}[scale=0.5]
\draw[color=gray!40] (-0.3,-1.3) grid (4.3,1.3);
\drawExSec{0, 0;2, 0;2, 1;3, -1;3, 1;4, 1}
\end{tikzpicture}
\quad
\begin{tikzpicture}[scale=0.5]
\draw[color=gray!40] (-0.3,-1.3) grid (2.3,2.3);
\drawExSec{0, 0;0, 1;1, -1;1, 1;2, 1;0, 2}
\end{tikzpicture}
\quad
\begin{tikzpicture}[scale=0.5]
\draw[color=gray!40] (-0.3,-2.3) grid (2.3,1.3);
\drawExSec{0, 0;1, -2;1, 0;2, 0;0, 1;2, 1}
\end{tikzpicture}
\end{center}
\end{corollary}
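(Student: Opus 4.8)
The plan is to leverage the preceding theorem, which already shows that up to helixing and reordering mutually orthogonal elements every \mes\ is vertically or horizontally lex ordered. Since the involution $\sigma$ of~\eqref{eq:sigmaInv} interchanges the two lexicographical orders and fixes $K_X$ (hence commutes with helixing), it suffices to enumerate the vertically lex ordered \mes s up to helixing. I would pick one representative per helix-orbit by twisting so that $\cl_1=\CO_{X}$, reducing the problem to listing all exceptional, vertically ordered sequences $\CO_X=\cl_1<_{\mathrm{vert}}\cl_2<_{\mathrm{vert}}\cdots<_{\mathrm{vert}}\cl_6$ of length $\rk K_0(X_2)=6$.

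First I would construct these sequences greedily, one element at a time. Given an exceptional initial segment $\cl_1,\dots,\cl_m$, its admissible successors form the set $\mr{Suc}(\cl_1,\dots,\cl_m)=\{\CL\mid \CL>_{\mathrm{vert}}\cl_m \text{ and } \CL-\cl_l\in-\Imm(X_2)\text{ for }1\le l\le m\}$. By Subsection~\ref{subsec:ImmLocXl}, for $\lH=2$ the locus $-\Imm(X_2)$ is the union of the three lines $\{i=1\}$, $\{j=1\}$ and $\{i+j=2\}$, so each condition $\CL-\cl_l\in-\Imm$ carves out three lines and $\mr{Suc}$ is a finite intersection — exactly the computation already performed for $\mr{Suc}(\cl_2)$ in Cases~1--3 of the theorem's proof. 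Iterating this branching to depth six yields all candidate sequences.

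The step I expect to be the crux is proving that this search is finite and exhaustive. Here the quotient lattice $\Pic(X_2)/\Lambda\cong(\Z/3\Z)^2$ does the organising work: by Lemma~\ref{lem:PicLambda} the six elements inject into the nine-element quotient, and by the proposition on shifted diagonals each coset $p+\Delta$ carries exactly two of them, with one gap point. Translating into the three colours of Figure~\ref{fig:colNegImmLocFlags} (blue for the diagonal, red for the horizontal, green for the vertical strip) caps how often each colour occurs. Moreover, mutual exceptionality forces every pairwise difference onto one of the three lines, so a cluster of elements sharing an immaculate line can be chained only a bounded number of times before the accumulated difference leaves $-\Imm$; together with the six-element requirement this bounds the spread of the $b$-coordinate and hence makes the branching tree finite. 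The honest difficulty is the bookkeeping: one must run the case analysis so that no admissible sequence is missed and no spurious one is retained.

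Finally I would sort the resulting finite list into helix-orbits by repeatedly applying $\helixR$ (renormalising $\cl_1=\CO_X$ after each step), which produces the five classes (i)--(v). Applying $\sigma$ then yields the classes (i)$'$--(v)$'$, and I would record the coincidences (ii)$'$=(ii), (iv)$'$=(iv) and (v)$'$=(v) arising from $\sigma$-symmetry. This is precisely the ``straightforward, if tedious computation'' announced before the statement.
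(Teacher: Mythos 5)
Your outline is the paper's own: reduce via the preceding theorem (up to helixing and swaps of mutually orthogonal elements) to vertically or horizontally lex ordered \mes s, discard the horizontal case using $\sigma$ (which fixes $K_X$ and hence commutes with helixing), enumerate the vertically ordered sequences normalised so that $\cl_1=\CO_X$, and sort the outcome into helix classes by iterating $\helixR$. The paper compresses the enumeration into the phrase ``straightforward, if tedious computation'', and your successor-set branching is exactly the mechanism its proof of the preceding theorem employs for $\mr{Suc}(\cl_2)$.

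The genuine problem lies in the step you yourself call the crux: the claim that the branching tree is finite. It is not, and the very statement you are proving says so: Class (i) is an \emph{infinite} family, its middle horizontal chain being translatable by an arbitrary $a\in\Z$. The reason is that the successor locus $\bigcap_{l\le m}\big(\cl_l+(-\Imm)\big)$ need not be finite: whenever the elements chosen so far are collinear along one of the three immaculate directions --- for instance $(0,0),(1,0),(2,0)$ on a horizontal line --- every translate $\cl_l+(-\Imm)$ contains one and the same line (here $\{j=1\}$), so the successor locus contains that entire line and the node has infinitely many children. Your lattice bound (two image points per shifted diagonal) and the bounded chain lengths may well control the spread of the $b$-coordinate, but they say nothing about the $a$-coordinate, so they cannot yield finiteness; an enumeration run on that premise would either not terminate or silently truncate the Class (i) families. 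The repair is to flag exactly the degenerate, collinear configurations, adjoin the new element as a free parameter running along the shared line, and impose the remaining exceptionality conditions as constraints on that parameter --- which is precisely what the arrow notation in Class (i) records. With that amendment (successor loci are otherwise intersections of translates of three lines in three fixed directions, hence contain at most six points), your bookkeeping goes through and reproduces the paper's computation.
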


From Corollary~\ref{coro:FullList} we can immediately compute $\pF(\cl)$ via Equation~\eqref{eq:CompF} on Page~\pageref{eq:CompF}. Since we always have $\Eff(\sigma)\subseteq\pF(\cl)$, we merely compute the complement 
\[
\pF_0(\sigma):=\pF(\cl)\setminus\Eff(\sigma).
\]
Moreover, we only give the stable range for the sequences of Class (i).

\begin{corollary}[Theorem~\thmStrongExSec\ for $X_2$]
\label{cor:StrExSecX2}
The partial order associated with the \mes s of Corollary~\ref{coro:FullList} have the following generating sets:

\medskip

Class (i): For the first sequence, we find
\[
\pF_0(\cl)=\left\{
\begin{array}{ll}
\varnothing,&\quad a\geq1\\
\{(\cl_i,\cl_j)\mid1\leq i\leq 3;\;4\leq j\leq 6\},&\quad a\leq-6.
\end{array}\right.
\]
For the second sequence, we find
\[
\pF_0(\cl)=\left\{
\begin{array}{ll}
\{(\cl_3,\cl_6),\,(\cl_4,\cl_6),\,(\cl_5,\cl_6)\},&\quad a\geq5\\
\varnothing,&\quad a=1\\
\{(\cl_i,\cl_j)\mid i=1,\,2;\;3\leq j\leq 5\},&\quad a\leq-4.
\end{array}\right.
\]
For the third sequence, we find
\[
\pF_0(\cl)=\left\{
\begin{array}{ll}
\{(\cl_i,\cl_j)\mid 2\leq i\leq 4;\;j=5,\,6\},&\quad a\geq7\\
\varnothing,&\quad a=1\\
\{(\cl_1,\cl_2),\,(\cl_1,\cl_3),\,(\cl_1,\cl_4)\},&\quad a\leq-4.
\end{array}\right.
\]
The first sequence is strongly exceptional if $a\geq1$, whereas the other sequences are strongly exceptional only if $a=1$. In fact, they are all strongly cyclic for $a=1$.

\medskip

Class (ii):
$\pF_0(\cl)=\varnothing$. Thus, all sequences are strongly exceptional. In particular, this class is strongly cyclic.

\medskip

Class (iii):
\begin{align*}
\pF_0(\cl)=
&\{(\cl_2,\cl_3),\,(\cl_4,\cl_5)\}\,|\,\{(\cl_1,\cl_2),\,(\cl_3,\cl_4)\}\,|\,\{(\cl_2,\cl_3)\}\\
&\{(\cl_1,\cl_2),\,(\cl_5,\cl_6)\}\,|\,\{(\cl_4,\cl_5)\}\,|\,\{(\cl_3,\cl_4),\,(\cl_5,\cl_6)\}.
\end{align*}

\medskip

Class (iv): 
\begin{align*}
\pF_0(\cl)=&\{(\cl_3,\cl_4)\}\,|\,\{(\cl_2,\cl_3),\,(\cl_5,\cl_6)\}\,|\,\{(\cl_1,\cl_2),\,(\cl_4,\cl_5)\}\\
&\{(\cl_3,\cl_4)\}\,|\,\{(\cl_2,\cl_3),\,(\cl_5,\cl_6)\}\,|\,\{(\cl_1,\cl_2),\,(\cl_4,\cl_5)\}.
\end{align*}

\bigskip

Class (v): 
\begin{align*}
\pF_0(\cl)=&\{(\cl_3,\cl_4)\}\,|\,\{(\cl_2,\cl_3),\,(\cl_5,\cl_6)\}\,|\,
\{(\cl_1,\cl_2),\,(\cl_4,\cl_5)\}\\
&\{(\cl_3,\cl_4)\}\,|\,
\{(\cl_2,\cl_3),\,(\cl_5,\cl_6)\}\,|\,\{(\cl_1,\cl_2),\,(\cl_4,\cl_5)\}. 
\end{align*}
\end{corollary}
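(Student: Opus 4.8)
The plan is to turn the corollary into a single bookkeeping exercise governed by~\eqref{eq:CompF}. Since every sequence $\cl=(\cl_1,\dots,\cl_6)$ in Corollary~\ref{coro:FullList} is already written in an exceptional order, the relation $\pF(\cl)$ is contained in that order, so a pair can only contribute as $(\cl_i,\cl_j)$ with $i<j$. By Lemma~\ref{lem:deltaF} such a pair lies in $\pF(\cl)$ exactly when the difference $\cl_j-\cl_i$ lies in
\[
\CF=-\Imm(X_2)\setminus\big(\Imm(X_2)\cap-\Imm(X_2)\big),
\]
and it lies in the effective part $\Eff(\cl)$ exactly when $\cl_j-\cl_i\in\Eff(X_2)$. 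For $X_2$ both regions are completely explicit: by the corollary to Theorem~\ref{thm:cohomPOmega}, $\CF$ is the union of the green line $\{i=1\}$, the red line $\{j=1\}$ and the blue anti-diagonal $\{i+j=2\}$ with the six points of $\Imm\cap-\Imm$ removed (Figures~\ref{fig:colNegImmLocFlags} and~\ref{fig:AllowImm}), while $\Eff(X_2)=\Nef(X_2)=\langle h,H\rangle$ is the first quadrant. Hence, writing $\CF_0:=\CF\setminus\Eff(X_2)$, the set $\pF_0(\cl)$ consists precisely of those pairs $(\cl_i,\cl_j)$, $i<j$, with $\cl_j-\cl_i\in\CF_0$, where $\CF_0$ is the part of the three coloured lines lying outside the first quadrant, still with the six excluded points removed.

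With $\CF_0$ recorded once and for all as explicit arithmetic progressions along the three coloured lines (the blue one splitting into two rays), the computation proceeds class by class. For each six-term sequence one inspects the at most $\binom{6}{2}=15$ differences $\cl_j-\cl_i$ with $i<j$ and retains those landing in $\CF_0$; these are exactly the generating sets listed. The labour is organised by the fact that, within a class, the six sequences form a single $\helixR$-orbit, so the successive tabulations differ only by cycling one element and twisting it by $K_X^{-1}$; the involution $\sigma$ of~\eqref{eq:sigmaInv}, which swaps the green and red lines and fixes the blue one, provides a further consistency check and accounts for the self-symmetric classes.

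The strongness assertions then follow formally rather than by further computation. One first verifies the hypothesis of Corollary~\ref{cor:ExSecPoset}, namely $\Eff(X_2)\cap-\Imm(X_2)\subseteq\nAcyc(X_2)$: by Theorem~\ref{thm:cohomPOmega} every line bundle in the first quadrant has only $H^0$ non-vanishing and is therefore acyclic. Consequently, for $X_2$,
\[
\cl \text{ is strongly exceptional}\iff\pF(\cl)=\Eff(\cl)\iff\pF_0(\cl)=\varnothing,
\]
so the strongness of each individual sequence can simply be read off the computed $\pF_0(\cl)$. Strong cyclicity requires the stronger statement that strongness is preserved under helixing; since a class is exactly one helix orbit, it is strongly cyclic precisely when every sequence it lists has empty $\pF_0$, which holds for Class~(ii) throughout and, at $a=1$, for the three sequences of Class~(i).

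The one genuinely delicate point is the parametrised Class~(i), where the freely translatable middle chain introduces the integer $a$. As $a$ varies, the differences $\cl_j-\cl_i$ slide along the green, red and blue lines, and one must pin down for which values of $a$ they leave the first quadrant or fall onto one of the six excluded points of $\Imm\cap-\Imm$. This case analysis in $a$ is exactly what produces the three regimes and the ``stable range'' recorded in the statement; everything outside it is routine tabulation, already encoded in the incidences of points with the three coloured lines visible in Figure~\ref{fig:AllowImm}.
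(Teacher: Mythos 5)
Your proposal is correct and takes essentially the same route as the paper: the corollary is obtained there, too, by direct tabulation of the differences $\cl_j-\cl_i$ against $\CF$ via Equation~\eqref{eq:CompF} (equivalently Lemma~\ref{lem:deltaF}), with the strongness claims read off from Proposition~\ref{prop:StrongExcep} and Corollary~\ref{cor:ExSecPoset}, whose hypothesis $\Eff(X_2)\cap-\Imm(X_2)\subseteq\nAcyc(X_2)$ holds exactly as you argue from Theorem~\ref{thm:cohomPOmega}. One bookkeeping caveat: carrying out your tabulation shows that in Class (iii) the paper's two rows of generating sets are matched to the sequences of Corollary~\ref{coro:FullList} with the rows interchanged (e.g.\ the first listed sequence has $\pF_0(\cl)=\{(\cl_1,\cl_2),(\cl_5,\cl_6)\}$), so your claim that the computation returns ``exactly the generating sets listed'' is true only up to this permutation, which is immaterial for the method and for the strongness conclusions.
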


\subsection{Fullness of \meset s on $X_2$}
\label{subsec:FullXl}
We are now in a position to prove Theorem~\thmFull.

\begin{theorem}[Theorem~\thmFull\ for $X_2$]
\label{thm:CforX2}
On $X=\P(\Omega_{\P^2}(1))$ any \mes\ can be mutated to a sequence of Orlov-type. In particular, any \meset\ is full.
\end{theorem}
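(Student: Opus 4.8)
The plan is to exploit the explicit classification of Corollary~\ref{coro:FullList} together with the invariance of fullness under the elementary operations. First I would record three inputs. An Orlov-type sequence is full: on $\P^2$ the Beilinson sequence generates $\CD(\P^2)$, and Orlov's semiorthogonal decomposition (Example~\ref{exam:OrlovType}) then forces the concatenation of two such blocks to generate $\CD(X)$. Both mutation and helixing send (full) exceptional sequences to (full) exceptional sequences, by the propositions on mutation and helixing in Section~\ref{sec:ExObMut}. Finally, the involution $\sigma$ of Equation~\eqref{eq:sigmaInv} preserves the immaculate locus — this is the symmetry $(i,j)\mapsto(j,i)$ visible in Figure~\ref{fig:immaculatePOmegaL}, arising from the self-duality of the full flag variety $X_2\cong\Flag(1,2,3)$ — hence preserves fullness, while exchanging vertical and horizontal order. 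Consequently it suffices to treat the vertically ordered representatives, namely the classes (i)--(v); the classes (i)$'$--(v)$'$ follow by applying $\sigma$. Moreover, since the sequences listed within each class are successive right-helixes $\helixR$ of one another and helixing preserves fullness, it is enough to mutate one conveniently chosen representative of each class to a sequence of Orlov type.

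For the classes (i), (ii), (iii) no genuine mutation is needed. The first sequence of class (i) is already of Orlov type: its underlying set consists of the two horizontal chains $\{(0,0),(1,0),(2,0)\}$ and $\{(a,1),(a+1,1),(a+2,1)\}$, ordered bottom-to-top. For classes (ii) and (iii) I would take the fourth listed representative, namely $(0,0),(1,0),(1,1),(2,0),(2,1),(3,1)$ resp.\ $(0,0),(1,0),(-1,1),(2,0),(0,1),(1,1)$; in each the underlying set is again two horizontal chains on the adjacent lines $[y=0]$ and $[y=1]$, merely interleaved in the given order. The offending adjacent pair — $(1,1)$ and $(2,0)$ in class (ii), resp.\ $(-1,1)$ and $(2,0)$ in class (iii) — is mutually orthogonal: both differences $(2,0)-(1,1)=(1,-1)$ and $(1,1)-(2,0)=(-1,1)$ lie in the horizontal resp.\ vertical strip, while $(2,0)-(-1,1)=(3,-1)$ and $(-1,1)-(2,0)=(-3,1)$ lie in the horizontal resp.\ anti-diagonal strip, so all are immaculate by the description following Theorem~\ref{thm:cohomPOmega}. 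By Remark~\ref{rem:MutationLB} the mutation is then just a transposition, and a single swap reorders the set into bottom-then-top, i.e.\ into Orlov type.

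The substantive case is that of classes (iv) and (v). A direct inspection of the six listed helix-representatives of each shows that none is supported on two adjacent horizontal lines; every representative spreads over at least three values of $j$ (e.g.\ an outlying layer at $[y=3]$ or $[y=-2]$). Since reordering changes only the order and helixing only twists one element by $K_X^{\pm1}$, neither operation can bring these underlying sets onto two adjacent lines, so one is forced to perform genuine mutations, which leave the locus of line bundles and produce honest complexes in the sense of Remark~\ref{rem:Mut}. The approach I would take is to isolate the unique outlying layer of a chosen representative and mutate it across its neighbours using the defining mutation triangles, computing each mutated object through the explicit cohomology of Theorem~\ref{thm:cohomPOmega}; after finitely many such steps the sequence should collapse onto two adjacent horizontal chains, landing again on a sequence of line bundles of Orlov type.

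I expect the main obstacle to be exactly this last point: controlling the genuine mutations in classes (iv) and (v). One must track the intermediate complexes — the ranks of the vector-bundle terms and the evaluation maps in the triangles $\RHom(E,-)\otimes E\to\Id\to\leftMut_E$ — and verify both that the procedure terminates and that the terminal sequence consists of line bundles forming two horizontal chains, rather than one still carrying honest higher-rank terms. Once all of (i)--(v) are reduced to Orlov type, the conclusion of Theorem~\ref{thm:CforX2} is immediate: Orlov-type sequences are full, mutation and helixing preserve fullness, and $\sigma$ disposes of the classes (i)$'$--(v)$'$, so every \mes\ on $X=\P(\Omega_{\P^2}(-1))$ is full.
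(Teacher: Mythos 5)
Your overall architecture is the same as the paper's: reduce to one representative per helix class of Corollary~\ref{coro:FullList} (using that mutation, helixing and the geometric involution $\sigma$ preserve fullness), and bring that representative to Orlov type. Your handling of classes (i)--(iii) is correct and essentially identical to the paper's: the chosen representatives are already supported on two adjacent horizontal chains, the offending adjacent pairs have both differences in the immaculate strips of Theorem~\ref{thm:cohomPOmega}, so by Remark~\ref{rem:MutationLB} the mutation is a mere transposition.

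The genuine gap is classes (iv) and (v), and you have flagged it yourself without closing it. Your plan --- ``mutate the outlying layer across its neighbours, computing each mutated object through the explicit cohomology of Theorem~\ref{thm:cohomPOmega}, and verify the procedure terminates in line bundles'' --- cannot be executed as stated: Theorem~\ref{thm:cohomPOmega} computes cohomology of \emph{line bundles} only, whereas already after one mutation the moving object is not a line bundle. Indeed, by the pulled-back Euler sequence $0 \to \pi^*\Omega_{\P^2} \to \CO(-h)^{\oplus 3} \to \CO \to 0$ one has $\leftMut_{\CO(-h)}\CO = \pi^*\Omega_{\P^2}$, a rank-two bundle, so ``termination in a sequence of line bundles'' is exactly the statement that needs proof, and it is not formal bookkeeping but a geometric coincidence special to $X_2$. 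What fills this hole in the paper is Lemma~\ref{lem:banana}: the \emph{composition} of two left mutations returns a line bundle up to shift,
\[
\leftMut_{\CO(-h-H)}\leftMut_{\CO(-h)}\CO \;=\; \CO(-2h+H)[-1],
\]
together with its dual for right mutations. The second mutation is controlled by the relative tautological sequence $0 \to \CO(-h-H) \to \pi^*\Omega_{\P^2} \to \CO(-2h+H) \to 0$, whose sub-line bundle is identified by a Chern class computation in $A\big(\P(\Tang_{\P^2})\big)$, combined with the vanishing $\Hom^\bullet\big(\CO(-2h+H),\pi^*\Omega_{\P^2}\big) = 0$. After twisting by a suitable line bundle, this lemma moves the single outlying point of the fourth representative of class (iv) (resp.\ (v)) onto the missing spot of two adjacent horizontal chains in one double-mutation step, e.g.\ $(1,2)\mapsto(3,1)$ across $(2,0),(2,1)$ in the sequence $(0,0),(1,0),(1,2),(2,0),(2,1),(4,1)$. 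Without this computation, or an equivalent explicit identification of the mutated objects, your argument does not establish the theorem in the only cases where something beyond reordering is required.
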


\begin{proof}
We only need to check the statement for a single exceptional sequence in each of the helix classes listed in Corollary~\ref{coro:FullList}. 

\medskip

Case (i) is obvious by considering the first sequence. For Case (ii) we take the third, and for Case (iii) the fourth sequence, where we have the mutually orthogonal pairs $((1,-1),(0,2))$ and $((1,1),(0,2))$. Changing the order immediately reveals that these sequences are of Orlov-type, too.

\medskip

Finally, after tensoring with a suitable line bundle we can appeal to Lemma~\ref{lem:banana} below and mutate the fourth sequences of Cases (iv) and (v) into exceptional sequences of Orlov-type as follows:
\[
\begin{tikzpicture}[scale=0.6]
\draw[color=gray!40] (-0.3,-0.5) grid (4.3,2.5);
\drawExSec{0,0;1,0;1,2;2,0;2,1;4,1}
\draw[color=yellow] (1.5,-0.5) rectangle (2.5,1.5);
\draw[->,shorten <=4pt,shorten >=4pt,color=red] (1,2) -- (3,1);
\end{tikzpicture}
\quad\quad
\begin{tikzpicture}[scale=0.6]
\draw[color=gray!40] (-0.3,-1.5) grid (4.3,1.5);
\drawExSec{0,0;2,0;2,1;3,-1;3,1;4,1}
\draw[color=yellow] (1.5,-0.5) rectangle (2.5,1.5);
\draw[->,shorten <=4pt,shorten >=4pt,color=red] (3,-1) -- (1,0);
\end{tikzpicture}
\qedhere
\]
\end{proof}

It remains to prove the

\begin{lemma}
\label{lem:banana}
On $X = \P(\Omega_{\P^2}(1))$, the two (non full) exceptional sequences
\[
\big((-2,1), (-1,-1),(-1,0))\longleftrightarrow((-1,-1),(-1,0),(0,0)\big)
\]
are related by mutating $(-2,1)$ to the right, or alternatively, mutating $(0,0)$ to the left. More precisely, $\CO(-2h+H)[-1] = \leftMut_{\CO(-h-H)} \leftMut_{\CO(-h)} \CO$ where we ignore the shift in the exceptional sequences above.
\begin{center}
\begin{tikzpicture}[scale=0.5]
\draw[color=gray!40] (-2.3,-1.3) grid (0.3,1.3);
 \fill[thick, color=black] (-1,0) circle (7pt);
 \fill[thick, color=black] (-1,-1) circle (7pt);
 \fill[thick, color=red] (-2,1) circle (7pt);
 \fill[thick, color=red] (0,0) circle (7pt);
\draw[<->,shorten <=4pt,shorten >=4pt,color=red] (-2,1) -- (0,0);
\end{tikzpicture}
\end{center}
Dually, the two (non full) exceptional sequences
\[
\big((0,0),(1,0),(1,1))\longleftrightarrow((1,0), (1,1),(2,-1)\big)
\]
are related by mutating $(0,0)$ to the right, or alternatively, mutating $(2,-1)$ to the left. More precisely, $\CO(2h-H)[1] = \rightMut_{\CO(h+H)} \rightMut_{\CO(h)} \CO$
\begin{center}
\begin{tikzpicture}[scale=0.5]
\draw[color=gray!40] (-0.3,-1.3) grid (2.3,1.3);
 \fill[thick, color=black] (1,0) circle (7pt);
 \fill[thick, color=black] (1,1) circle (7pt);
 \fill[thick, color=red] (2,-1) circle (7pt);
 \fill[thick, color=red] (0,0) circle (7pt);
\draw[<->,shorten <=4pt,shorten >=4pt,color=red] (2,-1) -- (0,0);
\end{tikzpicture}
\end{center}
\end{lemma}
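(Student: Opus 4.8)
The plan is to read each identity as a composite of two one-step mutations and to compute them in turn, reducing everything to cohomology of line bundles via Theorem~\ref{thm:cohomPOmega}. First I observe that the two displayed chains are interchanged by the derived duality $\RHom(-,\CO_X)$: it sends $\CO_X(D)\mapsto\CO_X(-D)$, reverses the order of an exceptional sequence, exchanges $\leftMut$ with $\rightMut$, and carries $\CO(-2h+H)[-1]$ to $\CO(2h-H)[1]$. So it is enough to prove the first statement, say $\CO(-2h+H)[-1]=\leftMut_{\CO(-h-H)}\leftMut_{\CO(-h)}\CO$, and to transport it to the dual assertion at the end.

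For the first mutation I would invoke Remark~\ref{rem:Mut}. Since $\RHom(\CO(-h),\CO)=\RGamma(X,(1,0))=H^0(\P^2,\CO(1))=\kk^3$ sits in degree zero and $\CO(-h)=\pi^*\CO_{\P^2}(-1)$, the evaluation map is the pullback along $\pi$ of the surjection $\kk^3\otimes\CO_{\P^2}(-1)\to\CO_{\P^2}$ whose kernel is $\Omega_{\P^2}$. The defining triangle then identifies $\leftMut_{\CO(-h)}\CO$ with the cone of this pullback, i.e.\ $\leftMut_{\CO(-h)}\CO=\pi^*\Omega_{\P^2}[1]$, a shifted rank-two bundle.

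The substance lies in the second mutation $\leftMut_{\CO(-h-H)}\big(\pi^*\Omega_{\P^2}[1]\big)$. Applying $\RHom(\CO(-h-H),-)$ to the pulled-back Euler sequence $0\to\pi^*\Omega_{\P^2}\to\CO(-h)^{3}\to\CO\to0$ reduces $\RHom(\CO(-h-H),\pi^*\Omega_{\P^2})$ to the groups $\RGamma(X,(0,1))$ and $\RGamma(X,(1,1))$, which I would evaluate with Theorem~\ref{thm:cohomPOmega}; the upshot is that $\RHom(\CO(-h-H),\pi^*\Omega_{\P^2}[1])$ is concentrated in a single degree and one–dimensional, so the mutation triangle involves exactly one copy of $\CO(-h-H)$. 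To pin down that the resulting object is the single line bundle $\CO(-2h+H)[-1]$ rather than a genuine complex, I would use the left–orthogonalities $\RHom(\CO(-h),\CO(-2h+H))=\RGamma(X,(-1,1))=0$ and $\RHom(\CO(-h-H),\CO(-2h+H))=\RGamma(X,(-1,2))=0$, both vanishing because $(-1,1)$ and $(-1,2)$ lie in the vertical immaculate strip; together with $\RGamma(X,(0,-1))=0$ this also certifies that $\big(\CO(-2h+H),\CO(-h-H),\CO(-h)\big)$ is exceptional and generates the same admissible subcategory. A matching of classes in $K_0(X)$ computed from the two triangles then forces the third term to be $\CO(-2h+H)$ up to the shift $[-1]$. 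The relative tautological (Euler) sequence of $X=\P(\CE)$, which writes $\pi^*\Omega_{\P^2}$ as an extension of two line bundles on $X$, is the device that makes this collapse transparent.

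The step I expect to be the main obstacle is exactly this second mutation: determining the single cohomological degree in which $\RHom(\CO(-h-H),\pi^*\Omega_{\P^2}[1])$ lives and thereby fixing the shift $[-1]$ (respectively $[+1]$ in the dual statement). The orthogonality vanishings pin down the answer only up to shift and scalar, so the genuine content is the one nonzero cohomology computation of $\pi^*\Omega_{\P^2}\otimes\CO(h+H)$ through Theorem~\ref{thm:cohomPOmega}; once this is in hand, the dual identity follows formally by applying $\RHom(-,\CO_X)$.
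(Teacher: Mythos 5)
Your first mutation ($\leftMut_{\CO(-h)}\CO=\pi^*\Omega_{\P^2}$ up to shift, via the pulled-back Euler sequence) and your duality reduction of the second chain to the first are exactly what the paper does. The genuine gap is in your treatment of the second mutation, which is the actual content of the lemma. You propose to identify the cone of the evaluation map $\CO(-h-H)\to\pi^*\Omega_{\P^2}$ with the line bundle $\CO(-2h+H)$ (up to shift) by combining the orthogonality vanishings $\RGamma(X,(-1,1))=\RGamma(X,(-1,2))=0$ with a matching of classes in $K_0(X)$. This does not work: a class in $K_0(X)$ never determines an object of $\CD(X)$, and the orthogonalities you list only show that $(\CO(-2h+H),\CO(-h-H),\CO(-h))$ is an exceptional sequence --- they do \emph{not} ``certify that it generates the same admissible subcategory''. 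That membership statement, $\CO(-2h+H)\in\langle\CO(-h-H),\CO(-h),\CO\rangle$, is equivalent to the assertion being proved, so invoking it at this point is circular. (If one \emph{knew} this membership, one could argue that $\CO(-2h+H)$ lies in $\CO(-h-H)^\perp\cap\langle\CO(-h-H),\pi^*\Omega\rangle\simeq\CD(\mathrm{pt})$ and conclude; but nothing in your proposal establishes it.)

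What closes this gap in the paper is precisely the ingredient you defer to a parenthetical remark: the relative tautological sequence is not just ``a device that makes the collapse transparent'', its explicit computation \emph{is} the proof. The paper passes to $X\cong\P(\Tang_{\P^2})$, takes the tautological surjection $\pi^*\Omega\to\CO(H')$, and computes the kernel line bundle by a Chern-class calculation in $A(\P(\Tang_{\P^2}))=\Z[h,H']/(h^3,(H')^2+3hH'+3h^2)$, obtaining the short exact sequence $0\to\CO(-h-H)\to\pi^*\Omega\to\CO(-2h+H)\to0$ in the coordinates of $X$. Once this sequence is in hand, $\RHom(\CO(-h-H),\pi^*\Omega)\cong\C$ follows by applying $\Hom(-,\pi^*\Omega)$ to it (using $\Hom^\bullet(\CO(-2h+H),\pi^*\Omega)=0$), the evaluation map is forced to be the inclusion of the sequence up to scalar, and the cone is visibly the line bundle $\CO(-2h+H)$. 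Note also a secondary weakness of your route to $\RHom(\CO(-h-H),\pi^*\Omega)$: from the Euler sequence and the cohomology of $(0,1)$ and $(1,1)$ you only get an exact sequence $0\to\Hom\to\Hom(\CO(-h-H),\CO(-h))^{\oplus 3}\to\Hom(\CO(-h-H),\CO)\to\Ext^1\to 0$, i.e.\ an Euler-characteristic statement; concentration in a single degree additionally requires the composition (multiplication) map to be surjective, which needs a separate argument (e.g.\ equivariance and irreducibility on the flag variety) and is not supplied by Theorem~\ref{thm:cohomPOmega} alone.
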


\begin{proof}
To ease notation we use $X\cong \P(\Tang_{\P^2})$ as $\Tang_{\P^2} \cong \Omega_{\P^2}(-3)$, and we write $\Tang$ and $\Omega$ for $\Tang_{\P^2}$ and $\Omega_{\P^2}$, respectively. In particular, we find $H = 2h+H'$ for the relative hyperplane section $H'$ of $\P(\Tang)$. 

\medskip

Recall that there is a tautological morphism $\phi\colon\pi^*\Omega\to \CO(H')$ which yields the short exact sequence
\[
0\to\ker\phi\to\pi^*\Omega\to\CO(H')\to0.
\]
To compute $\ker\phi=\CO(ih+jH')$ we use $c(\Tang) = (1+h)^3 = 1 +3h+3h^2 \in A(\P^2) = \Z[ht]/h^3$ and~\cite[Example 8.3.4]{fultonIntersection}. We obtain
\[
A\big(\P(\Tang)\big)=\Z[h,H']/(h^3,(H')^2+3hH'+3h^2).
\]
In particular, $c(\ker\phi)=c(\pi^*\Omega)/c(\CO(H'))$, or more explicitly,
\[
1+ih+jH' = (1-3h+3h^2)/(1+H') = 1-3h-H'
\]
in $A\big(\P(\Tang)\big)$. As a result, we arrive at the short exact sequence
\[
0\to\CO(-3h-H')\to\pi^*\Omega\to\CO(H')\to0.
\]
With respect to the nef basis $(h,H)$ of $X = \P(\Omega(1))$ on $\Pic(X)$ this is
\begin{equation}
\label{eq:banana}
0\to \CO(-h-H) \to \pi^* \Omega \to \CO(-2h+H) \to 0,
\end{equation}
which can be also written as the triangle
\[
\CO(-2h+H)[-1] \to \CO(-h-H) \to \pi^* \Omega.
\]
One checks that $\Hom^\bullet(\CO(-2h+H),\pi^*\Omega) = 0$. Therefore, the triangle implies that 
\[
\Hom^\bullet(\CO(-h-H),\pi^*\Omega) \cong \Hom^\bullet(\pi^*\Omega,\pi^*\Omega) \cong \C
\]
whence $\leftMut_{\CO(-h-H)}\pi^*\Omega=\CO(-2h+H)[-1]$.

\medskip

On the other hand, $\pi^*\Omega$ also fits into the pullback of the Euler sequence
\[
0 \to \pi^* \Omega \to \CO(-h)^{\oplus 3} \to \CO \to 0,
\]
so $\leftMut_{\CO(-h)}\CO=\pi^*\Omega$. Putting everything together, we can carry out the following sequence of mutations:
\[
\begin{split}
(\CO(-h-H),\CO(-h),\CO) 
&\!\leadsto\!(\CO(-h-H), \pi^* \Omega = \leftMut_{\CO(-h)} \CO, \CO(-h))\\
&\!\leadsto\!(\CO(-h+H)[-1] = \leftMut_{\CO(-h-H)} \pi^* \Omega, \CO(-h-H),\CO(-h))
\end{split}
\]
In other words, $\CO(-2h+H)[-1] = \leftMut_{\CO(-h-H)} \leftMut_{\CO(-h)} \CO$. The second case follows dually.
\end{proof}

\section{Some remarks on the Rouquier dimension}
\label{sec:Rouquier}
Given a tilting object $T$ in a triangulated category $\mc T$ we can consider the following ascending chain of (not necessarily triangulated) subcategories of $\mc T$:
\begin{enumerate}
\item $\langle T \rangle_0 = \mathrm{add}(T)$ the smallest full subcategory which (a) contains the summands of $T$ and (b) is closed under direct sums, shifts and isomorphisms.
\item $\langle T \rangle_i$ the smallest full subcategory which (a) is closed under direct sums and shifts and (b) contains all objects $X$ that fit into triangles $A \to B \to X \oplus X'$ with $A \in \langle T \rangle_{i-1}$, $B \in \langle T \rangle_0$, and $X'$ an object complementing $X$ to the cone of $A$ and $B$.
\end{enumerate}
The {\em generation time} of $T$ is the smallest integer $i\in\N$ such that $\mc T = \langle T \rangle_i$. The {\em Rouquier dimension} of $\mc T$ is the minimum over the generation times of all tilting objects in $\mc T$.

\begin{theorem}
Let $X$ be a smooth projective variety. 
\begin{enumerate}
\item[\rm(i)] $\dim(X)$ is smaller or equal to the Rouquier dimension of $\CD(X)$~\cite{Rouquier}.
\item[\rm(ii)] For a given tilting object $T\in\CD(X)$ let $i_0$ be the largest $i$ such that 
\[
\R^i\!\operatorname{Hom}(T,T \otimes K_X^{-1})=\Hom_{\CD(X)}(T,T \otimes K_X^{-1}[i]) \neq 0.
\]
Then the generation time of $T$ is equal to $\dim(X)+i_0$~\cite{BF12}.
\end{enumerate}
In particular, the Rouquier dimension of $\CD(X)$ is bounded by $\dim(X)+i_0$.
\end{theorem}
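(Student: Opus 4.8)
The plan is to derive the final inequality directly from part~(ii) together with the definition of the Rouquier dimension; part~(i) plays no role here, as it only supplies the complementary lower bound $\dim(X)\leq\dim_{\mathrm{Rouq}}\CD(X)$.

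First I would recall that, by definition, the Rouquier dimension of $\CD(X)$ is the minimum over the generation times of \emph{all} tilting objects of $\CD(X)$. In particular, for the single fixed tilting object $T$ appearing in the statement, this minimum is bounded above by the generation time of $T$ itself:
\[
\dim_{\mathrm{Rouq}}\CD(X)\;\leq\;\text{(generation time of }T\text{)}.
\]

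Next I would invoke part~(ii), which identifies the generation time of $T$ precisely as $\dim(X)+i_0$, where $i_0$ is the largest index with $\Hom_{\CD(X)}(T,T\otimes K_X^{-1}[i])\neq0$. Substituting this equality into the preceding inequality yields $\dim_{\mathrm{Rouq}}\CD(X)\leq\dim(X)+i_0$, which is exactly the asserted bound.

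There is essentially no obstacle in this deduction, as both ingredients are already established (and cited); the only point requiring care is that $T$ must genuinely be a tilting object, so that it lies in the class over which the minimum defining the Rouquier dimension is taken, which is part of the hypothesis. I would emphasise, however, that the substance for the intended applications lies not in this formal corollary but in exhibiting a tilting object with $i_0=0$ — equivalently, one for which $T\otimes K_X^{-1}$ has no higher $\Ext$ against $T$ — since then the upper bound collapses to $\dim(X)$ and, combined with part~(i), forces equality $\dim_{\mathrm{Rouq}}\CD(X)=\dim(X)$, thereby confirming Orlov's conjecture for the variety in question.
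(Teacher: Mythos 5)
Your deduction is correct and is exactly how the paper treats this statement: parts (i) and (ii) are quoted results from the literature (Rouquier and Ballard--Favero), and the final bound follows immediately from the paper's definition of Rouquier dimension as the minimum of generation times over tilting objects, applied to the given $T$ via the equality in (ii). Your closing remark about seeking $i_0=0$ is also precisely the strategy the paper pursues in Proposition~\ref{prop:rouquier}.
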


\begin{example}
\label{exam:rouquier_upperBound}
Let $\pi\colon X=\P(\CE)\to\P^\lH$ be a $\P^\lV$-bundle. Replacing $\CE$ by $\CE(D)$ for some divisor $D$ if necessary, we may assume that the relative hyperplane section $H$ is sufficiently positive. This implies that the full exceptional sequence of Orlov-type
\begin{equation}
\label{eq:TiltingBundle}
\big(\CO, \CO(h),\ldots,\CO(\lH h),\CO(h+H),\ldots,\CO(\lH h + \lV H)\big)\end{equation}
is strongly exceptional. Let $T$ be the tilting object given by the direct sum of these line bundles. Since
\begin{align*}
&\;\R^k\!\Hom\big(\CO_X(i_1h +j_1H),\CO_X(i_2h+j_2H)\otimes K_X^{-1}\big) \\
=&\;\mr{H}^k\big(\P^\lH,\CO_{\P^\lH}(i_2-i_1)\otimes\R\pi_* \CO_X\big((j_2-j_1)H\otimes K_X^{-1}\big)\big)
\end{align*}
and $\R \pi_* \CO_X\big((j_2-j_1)H \otimes K_X^{-1}\big)$ is of the form $\CO(i')\otimes\Sym^j\CE^\vee$ and thus a bundle on $\P^\lH$, we have 
$\R^i\!\Hom(T,T\otimes K_X^{-1}) = 0$ for $i > \lH$. Hence the generation time of $T$ is bounded by $\dim(X)+\lH$.
\end{example}

A conjecture of Orlov states that the Rouquier dimension of $\CD(X)$ actually coincides with $\dim(X)$. We can sharpen the previous estimate in order to support this conjecture in the following two cases:

\begin{proposition}
\label{prop:rouquier}
The Rouquier dimension of $\CD(X)$ is equal to $\dim(X)$ if 
\begin{enumerate}
\item[\rm(i)] $X=X(\lH,\lV;c)$ and $-K_X$ is nef.
\item[\rm(ii)] $X=X_\lH$.
\end{enumerate}
\end{proposition}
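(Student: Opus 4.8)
The plan is to establish the two bounds separately. The inequality $\dim X \le (\text{Rouquier dimension})$ is Rouquier's theorem~\cite{Rouquier}, so for the upper bound it suffices, by the generation-time formula of~\cite{BF12}, to produce a \emph{single} tilting object $T$ whose invariant $i_0$ vanishes: its generation time is then $\dim X + i_0 = \dim X$, and this pins the Rouquier dimension to $\dim X$. Since $\R^i\Hom(\CL,\CL'\otimes K_X^{-1}) = H^i(X,\CL'-\CL-K_X)$, the condition $i_0=0$ for a tilting object $T=\bigoplus_i \CL_i$ reads
\[
\CL_j-\CL_i-K_X\in\nAcyc(X)\qquad\text{for all }i,j,
\]
which is equivalent to $T$ being \emph{strongly cyclic} in the sense of Section~\ref{sec:ExObMut}, as helixing cycles the summands through $-\otimes K_X^{-1}$. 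In both cases I therefore only have to exhibit one full, strongly exceptional sequence of line bundles that remains acyclic after subtracting $K_X$.

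For (i) I would take the Orlov-type tilting bundle $T=\bigoplus_{0\le a\le\lH,\,0\le b\le\lV}\CO_X(ah+bH)$ of Example~\ref{exam:rouquier_upperBound}. Strong exceptionality and fullness are immediate: a difference $\CL_j-\CL_i=(\Delta a,\Delta b)$ with $\Delta b\in[-\lV,-1]$ lies in the horizontal strip $\CH$ and is immaculate by Proposition~\ref{prop:cohomPBundleP}, while for $\Delta b\ge 0$ the push-forward formula~\eqref{eq:pushforwardSplit} rewrites the cohomology as that of $\CO_{\P^\lH}(\Delta a)\otimes\Sym^{\Delta b}\CE^\vee$, a direct sum of line bundles of degree $\ge\Delta a\ge-\lH$, killing $H^\lH(\P^\lH,-)$. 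For $i_0=0$ the relevant class is $\CL_j-\CL_i-K_X=(\Delta a+\lH+1-\xb,\;\Delta b+\lV+1)$; its vertical coordinate is now $\ge1$, so the push-forward is again a genuine symmetric power, and the minimal summand degree equals $\Delta a+\lH+1-\xb\ge 1-\xb$. The hypothesis that $-K_X$ be nef means exactly $\xb\le\lH+1$, since $-K_X=(\lH+1-\xb,\,\lV+1)$ and $\Nef(X)=\langle h,H\rangle$ by~\eqref{eq:NefCone}; hence this degree is $\ge-\lH$, all higher cohomology on $\P^\lH$ vanishes, and $i_0=0$.

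For (ii) the naive Orlov bundle is \emph{not} strongly exceptional, because a vertical difference such as $(0,1-\lH)$ meets the region $\CH^{\lH-1}$. I would instead use the slope-one staircase
\[
T=\bigoplus_{0\le a\le\lH,\;0\le b\le\lH-1}\CO_X\big((a+b)h+bH\big),
\]
which is still of Orlov type (layer $b$ carries the Beilinson chain $\CO_{\P^\lH}(b),\dots,\CO_{\P^\lH}(b+\lH)$ twisted by $\CO_X(bH)$) and hence full by Example~\ref{exam:OrlovType}. Acyclicity I read off directly from the Corollary to Theorem~\ref{thm:cohomPOmega}, which gives $\nAcyc(X_\lH)=\{i,j\ge0\}\cup\mathcal{H}\cup\mathcal{V}\cup\mathcal{D}$. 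The differences are $(\Delta a+\Delta b,\Delta b)$ with $\Delta a\in[-\lH,\lH]$ and $\Delta b\in[-(\lH-1),\lH-1]$: downward ones ($\Delta b<0$) sit in the horizontal strip $\mathcal{H}$; the flat extreme $(-\lH,0)$ lands on the anti-diagonal $\mathcal{D}$; and upward ones have first coordinate $\ge\Delta b-\lH\ge 1-\lH$, so they fall into the first quadrant or the vertical strip $\mathcal{V}$. This shows $T$ is tilting. For $i_0=0$ the shifted class $\CL_j-\CL_i-K_X=(\Delta a+\Delta b+\lH,\;\Delta b+\lH)$ has second coordinate $\ge1$ and first coordinate $\ge-\lH+1$, so it lies in $\{i,j\ge0\}\cup\mathcal{V}$ and is acyclic.

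The only genuine subtlety — and the point where the crude estimate $\dim X+\lH$ of Example~\ref{exam:rouquier_upperBound} gets sharpened to $\dim X$ — is the joint choice of tilting object and an \emph{honest} description of the acyclic locus. For $X_\lH$ one must not read acyclicity off the nominal cohomology cones of Theorem~\ref{thm:cohomPOmega}: those cones are cut through by the immaculate strips $\mathcal{H},\mathcal{V},\mathcal{D}$, on which the cohomology in fact vanishes, and it is precisely the diagonal strip $\mathcal{D}$ and the vertical strip $\mathcal{V}$ that rescue the extreme differences of the staircase. In the toric case the analogous subtlety is mild, and the whole argument reduces to the single numerical inequality $\xb\le\lH+1$ furnished by the nefness of $-K_X$.
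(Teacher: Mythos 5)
Your proposal is correct and follows the paper's strategy exactly: Rouquier's lower bound plus the generation-time formula of~\cite{BF12}, applied to an Orlov-type tilting bundle all of whose $K_X^{-1}$-twisted Homs are concentrated in degree zero. Part (i) is the paper's computation verbatim, and for part (ii) your slope-one staircase is precisely the (consistently read) bundle of Example~\ref{exam:rouquier_upperBound} -- note its second row starts at $\CO(h+H)$ -- so the paper's terse one-line verification for $X_\lH$ is exactly the explicit check you carry out; your version has the merit of making the choice of staircase and the acyclicity verification against the strips $\mathcal{H},\mathcal{V},\mathcal{D}$ explicit.

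One slip worth correcting, though it sits in a motivational remark and not in the proof chain: your stated reason why the ``naive'' bundle $\bigoplus_{0\le a\le\lH,\,0\le b\le\lH-1}\CO_X(ah+bH)$ fails to be strongly exceptional is wrong. The class $(0,1-\lH)$ does \emph{not} obstruct anything: it lies in the horizontal strip $\mathcal{H}$ (its second coordinate equals $-\lH+1$) and is therefore immaculate -- exactly the phenomenon of cones being cut by the strips that you yourself invoke in your closing paragraph, so the remark contradicts your own criterion. The naive bundle does fail, but for a different reason: it contains the pair $\CO(\lH h)$, $\CO(H)$, whose difference $(-\lH,1)$ lies in none of the three strips and carries nonzero cohomology in degree $\lH-1$ (for instance by the dimension formula of Theorem~\ref{thm:cohomPOmega}, which gives $\lH+1$). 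Since your actual argument for (ii) never uses this remark -- it only needs that the slope-one staircase is full, strongly exceptional, and has all shifted classes $\CL_j-\CL_i-K_X$ in $\{i,j\ge0\}\cup\mathcal{V}$ -- the proof stands as written.
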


\begin{proof}
We consider the tilting bundle from Example~\ref{exam:rouquier_upperBound}.

\medskip

(i) Since $-K_X = (\lH+1-\beta)h+(\lV+1)H$, we have
\begin{align*}
&\;\mr H^k\big(X,\CO_X((i_2-i_1)h + (j_2-j_1)H) \otimes K_X^{-1}\big)\\
=&\;\mr H^k\big(X,\CO_X((i_2-i_1+\lH+1-\beta)h + \underbrace{(j_2-j_1+ \lV+1)}_{>0}H)\big).
\end{align*}
In order for this to vanish, we gather from the shape of the immaculate locus that the only higher cohomology group to avoid is $H^\lH$. This requires $i_2-i_1+\lH+1-\beta > -\lH-1$, or equivalently, $\beta \leq i_2-i_1 + 2\lH+1$. As $i_2-i_1\geq - \lH$, the $\lH$-th cohomology vanishes if and only if $ \beta \leq \lH + 1$, that is, $\lH+1-\beta \geq 0$ which is tantamount to saying that $-K_X$ is nef.

\medskip

(ii) Since $K_{X_\lH}=\CO(-\lH, -\lH)$, $\Hom^\bullet(T,T\otimes\omega^{-1})$ is concentrated in degree zero. 
\end{proof}

\begin{remark}
In the recent article~\cite{RouquDim}, Orlov's conjecture has been established for any toric variety $X$. In particular, the nefness of $-K_X$ in Proposition~\ref{prop:rouquier} (i) is not a necessary condition. 
\end{remark}

\begin{remark}
For $X_\lH^\vee:=\P(\Tang_{\P^{\lH}}(2))$ the proof above does not work if $\lH \geq 3$. Indeed, consider the exceptional sequences of Orlov-type
\[
\begin{split}
( &(0,0), (1,0), \ldots, (\lH,0), \\
  &(a_1,1), (a_1+1,1), \ldots, (a_1+\lH,1), \\
  & \ldots \\
  &(a_{\lH-1},\lH-1), \ldots, (a_{\lH-1}+\lH,\lH-1) )
\end{split}
\]
for some integers $a_0=0,a_1,\ldots,a_{\lH-1}$. Such a sequence is strong if and only if $a_{i+1}-a_i\geq\lH-1$ for all $i$. In particular, $a_{\lH-1} \geq (\lH-1)^2$.

\medskip

Using that $K_{X_\lH^\vee}=(-2,-\lH)$, we see that
\[
\Hom^\bullet\big(\CO((a_{\lH-1}+\lH)h+(\lH-1)H),\CO \otimes \omega^{-1}\big)= 
H^\bullet\big(\CO((2-a_{\lH-1}-\lH)h+H)\big).
\]
Note that $2-a_{\lH-1}-\lH \leq -\lH^2-\lH+3 \leq -3\lH$ if $\lH\geq 3$. By Corollary~\ref{coro:cohomPOmega},
\[
H^\lH\big(\CO((2-a_{\lH-1}-\lH)h+H)\big)\neq0 
\]
for $\lH\geq3$, which merely yields $\dim(X_\lH)+\lH$ as an upper bound for the Rouquier dimension.

\medskip

Although we did not check all full exceptional sequences of line bundles on $X_\lH$, we believe that one cannot obtain a lower bound for the Rouquier dimension using those as soon as $\lH\geq3$. Since Orlov's conjecture is commonly supposed to be true one has to look for a more sophisticated way of producing tilting bundles.
\end{remark}

\end{document}